\newcommand*{\textlabel}[2]{%
  \edef\@currentlabel{#1}
  \phantomsection
  #1\label{#2}
}
\newtheoremstyle{custom}
  {3pt}
  {3pt}
  {\slshape}
  {}
  {\bfseries}
  {.}
  { }
   {}
\theoremstyle{custom}
\newtheorem{theorem}{Theorem}[section]
\newtheorem{proposition}[theorem]{Proposition}
\newtheorem{proposition/definition}[theorem]{Proposition/Definition}
\newtheorem{lemma}[theorem]{Lemma}
\newtheorem{corollary}[theorem]{Corollary}
\newtheorem{conjecture}[theorem]{Conjecture}
\theoremstyle{definition}
\newtheorem{definition}[theorem]{Definition}
\newtheorem{example}[theorem]{Example}
\newtheorem{problem}[theorem]{Problem}
\newtheorem{question}[theorem]{Question}
\theoremstyle{remark}
\newtheorem{remark}[theorem]{Remark}
\newcommand{\stack}[2]{\ensuremath{\genfrac{}{}{0pt}{}{#1}{#2}}} 
\newtheoremstyle{exercise}
  {3pt}
  {6pt}
  {}
  {}
  {\bfseries}
  {:}
  { }
   {}
\theoremstyle{exercise}
\newtheorem{exercise}[theorem]{Exercise}
\newtheoremstyle{exercises}
  {3pt}
  {6pt}
  {}
  {}
  {\bfseries}
  {:}
  {\newline}
   {}
\theoremstyle{exercise}
\newtheorem{exercises}[theorem]{Exercises}
\def\boxit#1{\vbox{\hrule height1pt\hbox{\vrule width1pt\kern3pt
  \vbox{\kern3pt#1\kern3pt}\kern3pt\vrule width1pt}\hrule height1pt}}
\def\trank{\text{rank}}
\def\BC{\mathbb C}
\def\BR{\mathbb R}
\def\BP{\mathbb P}
\def\pp#1{\mathbb P^{#1}}
\def\fgl{\mathfrak g\mathfrak l}
\def\pp#1{{\mathbb P}^{#1}}
\def\tdim{{\rm dim}}
\def\hd{,...,}
\def\ww{\wedge}
\def\upperp{{}^\perp}
\def\inv{{}^{-1}}
\def\cA{{\mathcal A}}
\def\cE{{\mathcal E}}
\def\cO{{\mathcal O}}
\def\11{\mathbf 1}
\def\fsl{{\mathfrak {sl}}}
\def\l{\lambda}
\def\a{\alpha}
\def\o{\omega}
\def\b{\beta}
\def\g{\gamma}
\def\s{\sigma}
\def\d{\delta}
\def\ot{{\mathord{ \otimes } }}
\def\op{{\mathord{\,\oplus }\,}}
\def\otc{{\mathord{\otimes\cdots\otimes}\;}}
\def\ra{{\mathord{\;\rightarrow\;}}}
\def\dim{{\rm dim}\;}
\def\La#1{\Lambda^{#1}}
\def\frak{\mathfrak}
\def\fgl{\frak g\frak l}\def\fsl{\frak s\frak l}
\def\op{\oplus}
\def\BZ{\Bbb Z}
\def\ep{\epsilon}
\def\op{\oplus}
\def\s{\sigma}
\def\t{\tau}
\def\a{\alpha}
\def\b{\beta}
\def\g{\gamma}
\def\l{\lambda}
\def\FS{\mathfrak  S}
\def\ol{\overline}
\def\BP{\mathbb  P}
\def\BC{\mathbb  C}
\def\pp#1{\mathbb  P^{#1}}
\def\BR{\mathbb  R}
\def\ep{\epsilon}
\def\hd{, \hdots ,}
\def\inv{{}^{-1}}
\def\La#1{\Lambda^{#1}}
\def\pp#1{\mathbb  P^{#1}}
\def\ur{\underline {\bold R}}
\def\ra{\rightarrow}
\def\tend{\operatorname{End}}
\def\tdim{\operatorname{dim}}
\def\tlim{\lim}
\def\thom{\operatorname{Hom}}
\def\trank{\operatorname{rank}}
\def\upperp{{}^{\perp}}
\def\ww{\wedge}
\def\bbb{{\bold{b}}}
\def\be{\begin{equation}}
\def\ene{\end{equation}}
\def\aaa{{\bold {a}}}
\def\bbb{{\bold {b}}}
\def\ccc{{\bold {c}}}
\DeclareMathOperator{\tlog}{log}
\DeclareMathOperator\tspan{span}
\def\tspan{{\rm span}}
\def\Mn{M_{\langle \nnn \rangle}}\def\Mone{M_{\langle 1\rangle}}
\def\tinf{{\rm inf}}
\def\trank{{\mathrm {rank}}}
\def\aaa{{\bold a}}\def\bbb{{\bold b}}\def\ccc{{\bold c}}
\def\mmm{\bold m}\def\nnn{\bold n}\def\lll{\bold l}
\begin{document}

\title{Abelian Tensors}
\author{J.M. Landsberg}
\address{
Department of Mathematics\\
Texas A\&M University\\
Mailstop 3368\\
College Station, TX 77843-3368, USA}
\email{jml@math.tamu.edu}
\author{Mateusz Micha{\l}ek}
\address{
Freie Universit\"at\\
 Arnimallee 3\\
 14195 Berlin, Germany\newline
Polish Academy of Sciences\\
         ul. \'Sniadeckich 8\\
         00-956 Warsaw\\
         Poland}
\email{wajcha2@poczta.onet.pl}

 \begin{abstract} We analyze tensors in $\BC^m\ot \BC^m\ot \BC^m$  satisfying Strassen's equations for border rank $m$.
 Results include: two purely geometric characterizations of the Coppersmith-Winograd tensor, a reduction to the study of symmetric
 tensors under a mild genericity hypothesis, and numerous additional equations and examples. This study is closely connected to the
 study of the variety of $m$-dimensional abelian subspaces of $\tend(\BC^m)$ and the subvariety consisting of the Zariski closure
 of the variety of maximal tori, called the variety of reductions.
 
 \medskip

\noindent{\bf Sommaire}. Nous \'etudions des tenseurs dans $\BC^m\ot \BC^m\ot \BC^m$ satisfaisant les \'equations de Strassen lorsque le rang du bord   vaut $m$. Les r\'esultats obtenus comprennent : deux caract\'erisations purement g\'eom\'etriques du tenseur de Coppersmith-Winograd, une r\'eduction \`a l'\'etude des tenseurs sym\'etriques sous une hypoth\`ese raisonnable de g\'en\'ericit\'e, et beaucoup de nouveaux exemples et \'equations. Cette \'etude est li\'ee de pr\`es \`a l'\'etude de la vari\'et\'e des sous-espaces ab\'eliens de dimension $m$ de $\tend(\BC^m)$ et la sous-vari\'et\'e obtenue comme  l'adh\'erence de Zariski de la vari\'et\'e des tores maximaux, appel\'ee vari\'et\'e des r\'eductions.
 \end{abstract}
\thanks{Landsberg partially  supported by   NSF grants  DMS-1006353, DMS-1405348. Michalek was supported by Iuventus Plus grant 0301/IP3/2015/73 of the Polish Ministry of Science.}

\keywords{tensor, commuting matrices, Strassen's equations,   MSC 68Q17, 14L30, 15A69}
\maketitle

\section{Introduction}
 The rank and border rank of a tensor $T\in \BC^m\ot \BC^m\ot \BC^m$ (defined below) are basic
measures of its complexity. Central
 problems are to develop techniques to determine them (see, e.g.,
\cite{MR2865915,BCS,Como94:SP,MR3236394}).
Complete resolutions of these problems are currently out of reach. For
example,
neither problem is   solved already in $\BC^4\ot \BC^4\ot \BC^4$.
This article focuses on a very special class of tensors, those satisfying
Strassen's commutativity equations (see \S\ref{streqnssect}). The study of such
tensors is related to the classical
problem of studying spaces of commuting matrices, see, e.g.~\cite{MR0132079,MR2118458,MR1199042,MR813045}.

To completely understand border rank, it would be sufficient
to understand the case of border rank $m$ in $\BC^m\ot \BC^m\ot \BC^m$ (see \cite[Cor. 7.4.1.2]{MR2865915}).
  We study this problem under two
genericity hypotheses - {\it concision},
which essentially says we restrict to tensors that are not contained in some
$\BC^{m-1}\ot \BC^m\ot \BC^m$, and {\it $1_A$-genericity}, which is defined
below.
Even under these genericity hypotheses, the problem is still subtle.

Let $A,B,C$ be complex vector spaces of dimensions $\aaa,\bbb,\ccc$, let
$T\in A\ot B\ot C$ be a tensor. (In bases $T$ is a  three dimensional matrix of
size $\aaa\times \bbb\times \ccc$.) We may view $T$ as a linear map
$T : A^*\ra B\ot C\simeq \thom(C^*,B)$. (In bases, $T ((\a_1\hd \a_{\aaa}))$
is the $\bbb\times \ccc$ matrix
$\a_1$ times the first slice of the $\aaa\times \bbb\times \ccc$ matrix, plus
$\a_2$ times the second slice ... plus $\a_{\aaa}$ times the $\aaa$-th slice.)
One may recover $T$ up to isomorphism
from the space of linear maps $T (A^*)$.

One says $T$ has {\it rank one} if
$T=a\ot b\ot c$ for some $a\in A$,
$b\in B$ and $c\in C$,  and the {\it rank} of $T$, denoted $\bold R(T)$ is
the smallest $r$ such that $T$ may be expressed as the sum of $r$ rank one
tensors. Rank is not semi-continuous, so one defines the {\it border rank} of
$T$, denoted
$\ur(T)$, to be the smallest $r$ such that $T$ is a limit of tensors of rank
$r$, or equivalently
(see e.g.~\cite[Cor. 5.1.1.5]{MR2865915}) the smallest $r$ such that $T$ lies in
the Zariski closure of the set of tensors of
rank $r$. Write ${\hat\s_r}(Seg(\BP A\times \BP B\times \BP C))\subset A
\ot B\ot C$ for
the variety of tensors of border rank at most $r$ (the cone over the $r$-th secant variety of the
Segre variety).
We will be mostly concerned with the case $\aaa=\bbb=\ccc=m$.

To make the connection with spaces of commuting matrices, we need to have
linear maps from a vector space to itself. Define $T\in A\ot B\ot C$
to be {\it  $1_A$-generic} if there exists $\a \in A^*$ with $T(\a)$ invertible.
Then $T(A^*)T(\a)\inv\subset \tend(B)$ will be our space of endomorphisms and
Strassen's equations for border rank $m$ is that this space is {\it abelian},
i.e., in bases
we obtain a space of commuting matrices.

Of particular interest is when an $m$-dimensional space of commuting matrices,
viewed as a point of the Grassmannian $G(m,\tend(B))$, is in the closure of
the space of diagonalizable subspaces (i.e., the maximal tori in $\fgl_n$),
which is denoted $Red(m)$ in \cite{MR2202260}.  Much of this paper will
utilize the interplay between the tensor and   endomorphism  perspectives.

Our primary 
  motivation  for this paper comes from the study of the complexity
of the matrix multiplication tensor $\Mn\in \BC^{n^2}\ot \BC^{n^2}\ot
\BC^{n^2}$.
We initiate  a geometric study of the tensors
used to prove upper bounds on the exponent of matrix multiplication,
especially the Coppersmith-Winograd tensor. 
In   \cite{DBLP:journals/corr/AmbainisFG14} they showed that one cannot
prove the exponent of matrix multiplication is less than exponent $2.3$ using the laser method
applied to the Coppersmith-Winograd tensor that was used for the current world record in
\cite{williams,stothers,legall}. The authors 
  suggested that to improve the upper bound on the exponent  one should look for other tensors
that give even better upper bounds via the   laser method. While the tensors of Strassen and Coppersmith-Winograd
were defined in terms of their {\it combinatorial} properties, we thought it would be useful
to isolate their {\it geometric} properties, and use these geometric properties as a basis for the
search. One geometric property is that they have (near) minimal border rank and relatively large 
rank. In this paper we find other tensors with the same property and 
we hope to investigate their {\it value} (in the sense of
\cite{williams,stothers,legall,DBLP:journals/corr/AmbainisFG14}) in future work.
On the other hand, to our surprise, {\it we isolate  two further geometric
properties that essentially   characterize  the Coppersmith-Winograd tensors}, see Theorems
\ref{cwflag}
and \ref{cwcompr}, which hints that one might have already  reached the limits of the laser method.

Another motivation from computer science is the construction
of {\it explicit} tensors of high rank and border rank, see, e.g.,
\cite{MR3025382, MR3144910}.
We give several such examples.

Our results include 
\begin{itemize}

\item Two purely geometric characterizations of the Coppersmith-Winograd tensor
(Theorems \ref{cwflag} and \ref{cwcompr}). 
\item Determination of the ranks of numerous tensors of minimal border rank including: all $1_A$-generic tensors that satisfy Strassen's equations for $m=4$ and $m=5$.
\item Proof, in \S\ref{brkmsect}, when $m\leq 4$, of a conjecture of J.~Rhodes
\cite[Conjecture 0]{MR3069955} for tensors in $\BC^m\ot \BC^m\ot \BC^m$ that
their ranks cannot be twice their border ranks, and counter-examples for all $m>4$ (Proposition \ref{prop:brank5}).
\item Explicit examples of tensors with rank to border rank ratio greater than two (Proposition \ref{biggapprop}
and Theorem \ref{biggergapthm}).
\item Proof that  the flag algebras of \cite{MR2202260} are of minimal border rank,
 \S\ref{flagalgsect}.
\item  Explicit examples showing that the known necessary conditions
for minimal border rank are independent.  

\item $1$-generic tensors satisfying Strassen's equations,  but 
 far from minimal border rank, \S\ref{abelovermsect}.

\item Proof that  $1$-generic tensors satisfying Strassen's
equations
must be symmetric (Proposition \ref{surpriseprop}).

\item New necessary conditions for border rank to be minimal  (Theorem
\ref{infflagprop})
with an example  (Example \ref{schemefails}), answering a 
question of A.~Leitner  \cite{MR3503693}.

\item A class of tensors for which Strassen's additivity conjecture holds  (Theorem \ref{addthm}).
\end{itemize}

\subsection{Background and previous work}
The maximum
rank of $T\in \BC^m\ot \BC^m\ot \BC^m$ is not known, it is easily seen to be
at most $m^2$ (and known to be at most $\frac 23 m^2$ \cite{MR3368091}), and of course is at least the maximum  border rank. The maximum
border rank is $\lceil\frac{m^3-1}{3m-2}\rceil$   except when
$m=3$ when it is five \cite{MR87f:15017,Strassen505}. In computer science, there is interest in
producing explicit tensors of high rank and
border rank.  The maximal   rank of a known  explicit tensor is
$3m-\tlog_2(m)-3$ when $m$ is a power of two
\cite{MR3025382}, see
Example \ref{biggestrsofar}.

Tensors in $A\ot B\ot C$ are completely understood when all vector
spaces have dimension at most three \cite{MR3239293}. In particular,
for tensors of border rank three, the maximum rank is five.
The case of $\BC^2\ot \BC^m\ot \BC^m$ is also completely understood, see, e.g.~\cite[\S 10.3]{MR2865915}.
While tensors of border rank $4$ in $\BC^4\ot\BC^4\ot \BC^4$ are essentially
understood \cite{MR2996364,FriedlandGross,MR2836258}, the ranks of such tensors
are not known. We determine their ranks under our two genericity hypotheses.
The  difficulty of  understanding border rank four tensors in $\BC^4\ot \BC^4\ot
\BC^4$ (which was first overcome in \cite{MR2996364})  was non-concision, which
we avoid in this paper.

Let  $Red^{0,SL}(m)$ denote the set  of all maximal tori in $SL(m)$, i.e., the
set of all $(m-1)$-dimensional abelian subgroups that
are diagonalizable. It can be given
a topology  (called the {\it Chabauty topology}, see \cite{MR3503693}) and its
closure
$Red^{SL}(m)$ is studied in \cite{MR3503693}. (A.~Leitner works over
$\BR$, but
this changes little.) If one considers the corresponding Lie algebras, one
obtains a subvariety of the Grassmannian $Red^{\fsl}(m)=Red(m) \subset
G(m-1,\fsl_m)$
that was studied classically, and is called the {\it variety of reductions} in
\cite{MR2202260}. More precisely,
$$
Red(m)=\ol{\{ E\in G(m-1,\fsl_m) \mid {\rm a\ basis \ of \ }E{\rm \ is\ simultaneously \ diagonalizable} \} }.
$$
One can equivalently prove results at the Lie group
or Lie algebra level.  As the Cartan subalgebras of $\fsl_m$ and $\fgl_m$ can be identified, by adding and dividing out by identity, we may equivalently work in the Grassmannian $G(m,\fgl_m)$.  

We   present the results of \cite{MR3503693} (some of which we had
found independently) in  tensor language  for the benefit of the tensor
community.

A tensor $T\in A\ot B\ot C$ is  {\it $A$-concise}
if the map
$T : A^*\ra B\ot C$ is injective,
and it is {\it concise} if it is
$A$, $B$ and $C$ concise. Equivalently, $T$ is $A$-concise
if it does not lie in
any $A'\ot B\ot C$ with   $A'\subsetneq  A$.
Note that if $T$ is $A$-concise, then $\ur(T)\geq \aaa$.

\begin{definition}\label{def:concisegeneric}
If $\bbb=\ccc=m$   define $T\in A\ot B\ot C$ to be {\it $1_A$-generic}  if
$T (A^*)$ contains an element of rank $m$. Define
$1_B,1_C$ genericity similarly and say $T$ is {\it $1$-generic}
if it is $1_A,1_B$ and $1_C$-generic.
\end{definition}

Note that if $T$ is $1_A$-generic, then $T$  is $B$ and $C$ concise and in
particular,  $\ur(T)\geq m$.

\subsection{Organization}
In \S\ref{rkmeqnsssect} we describe  necessary conditions for $1_A$-generic
tensors to have
border rank $m$. In addition to Strassen's equations, there is
an $\tend$-closed condition,   flag genericity conditions, and infinitesimal
flag
genericity conditions, the last of which is new. In \S\ref{aftmeth}, we describe
the method of
\cite{MR3025382} for proving lower bounds on the
ranks of explicit tensors.
This method has a consequence for the study of Strassen's additivity conjecture
that we describe
in \S\ref{strremsect}.  In \S\ref{abelovermsect} we study $1_A$-generic tensors
satisfying Strassen's equations that
have border rank greater than $m$, giving explicit examples where each of the
necessary conditions fail and showing that
such tensors can have very large border rank. Moreover, we show that a $1$-generic
tensor satisfying Strassen's equations is isomorphic to a symmetric tensor.
In \S\ref{brkmsect} we study $1_A$-generic tensors of minimal border rank,
presenting a sufficient condition
to have minimal border rank,   classifications when $m=4,5$, computing the
ranks as well, and explicit examples
of   tensors with large gaps between rank and border rank.
We conclude in \S\ref{cwvaluesect} with a geometric analysis of tensors that
have been useful for proving upper bounds on the complexity of the matrix multiplication
tensor, in particular, giving two geometric characterizations of the Coppersmith-Winograd tensor.

\subsection{Notation}
Let $V$ be a complex vector space, $V^*=\{\a: V\ra \BC\mid \a{\rm \ is\
linear}\}$ denotes the dual vector space,
$V^{\ot k}$ denotes the $k$-th tensor power,
$S^kV$ denotes the symmetric tensors in $V^{\ot k}$, equivalently, the
homogeneous polynomials
of degree $k$ on $V^*$, and $\La k V$ denotes the skew-symmetric tensors in $V^{\ot
k}$. If $U\subset V$ (resp.  $v\in V$), we let $U\upperp \subset V^*$ (resp.
$v\upperp\subset V^*$) denote its annhilator.

  Projective space  is $\BP V= (V\backslash 0)/\BC^*$. For $v\in V$, $[v]\in \BP
V$ denotes the corresponding point in projective
 space  and for any subset $Z\subset \BP V$, $\hat Z\subset V$ is the corresponding
cone in $V$.
  For a variety $X\subset \BP V$,    $X_{smooth}$ denotes its
smooth points.     For $x\in X_{smooth}$, $\hat T_xX\subset V$ denotes its
affine tangent space.
For a subset $Z\subset V$ or $Z\subset \BP V$, its Zariski closure is denoted
$\ol{Z}$.

 The irreducible polynomial representations
of $GL(V)$ are indexed by partitions $\pi=(p_1\hd p_q)$ with at most $\tdim V$
parts.
Let $\ell(\pi)$ denote the number of parts of $\pi$ (so $\ell((p_1\hd p_q))=q$), and let $S_{\pi}V$ denote
the irreducible $GL(V)$-module
corresponding to $\pi$. The conjugate partition to $\pi$ is denoted $\pi'$.

Since we lack systematic methods to prove   bounds on the ranks of tensors,
we often
rely on the presentation of a tensor in a given basis to help us.
For example, the structure tensor for the group algebra of  $\BZ_m$ in the
standard basis looks like
$$
M_{\BC[\BZ_m]}(A^*)=\large\{\begin{pmatrix}
x_0 & x_1 &\cdots &x_{m-1}\\
x_{m-1} & x_0 &x_1 &\cdots\\
\vdots  &   &\ddots  & \\
x_{1} & x_2 & \cdots & x_0
\end{pmatrix} \mid x_j\in \BC\large\}
$$
but after a change of basis (the discrete Fourier transform), it becomes
diagonalized so in the new basis
it is transparently of rank and border rank $m$.

For $T\in A\ot B\ot C$, introduce the notation
for $T(A^*)$ omitting the $x_j\in \BC$, e.g., for $M_{\BC[\BZ_m]}(A^*)$,
we write
\be\label{gpalg}
M_{\BC[\BZ_m]}(A^*)= \begin{pmatrix}
x_0 & x_1 &\cdots &x_{m-1}\\
x_{m-1} & x_0 &x_1 &\cdots\\
\vdots  &   &\ddots  & \\
x_{1} & x_2 & \cdots & x_0
\end{pmatrix}  .
\ene

For tensors $T,T'\in \BC^m\ot \BC^m\ot \BC^m=A\ot B\ot C$, we will say $T$ and $T'$ are {\it strictly isomorphic} if
there exists $g\in GL(A)\times GL(B)\times GL(C)$ such that $g(T)=T'$, and we will say $T,T'$ are
{\it isomorphic} if there exists $g\in GL(A)\times GL(B)\times GL(C)$ and $\s\in \FS_3$ such that
$\s(g(T))=T'$.

\subsection{Acknowledgments} We thank
the Simons Institute for the Theory of Computing, UC Berkeley, for providing a
wonderful environment during the fall 2014 program
{\it Algorithms and Complexity in Algebraic Geometry} during which work  on  this
article began.
We also thank L.~Manivel for useful discussions and pointing out the reference
\cite{MR3503693}, and the anonymous referee who gave many
useful suggestions. Michalek is a member of AGATES group and a PRIME DAAD fellow.

\section{Border rank $m$ equations for tensors in $\BC^m\ot \BC^m\ot
\BC^m$}\label{rkmeqnsssect}

\subsection{Strassen's commutativity equations}\label{streqnssect}
Throughout this sub-section $\tdim B=\tdim C=m$.
Given $T\in A\ot B\ot C$ and  $\a \in A^*$, consider  $T(\a)\in B\ot
C=\thom(C^*,B)$.
If  $T(\a)$ is invertible, for all
$\a'\in A^*$, we may consider
$T(\a')T(\a)\inv :B\ra B$.
Let $[X,Y]=XY-YX$ denote the commutator of the matrices $X,Y$.

Strassen's equations \cite{Strassen505} are:
for all $\a,\a_1,\a_2\in A^*$ with $T(\a)$ invertible,,
$$\trank[T(\a_1)T(\a)\inv, T(\a_2)T(\a)\inv]\leq 2(\ur(T)-m).
$$
In particular, if $\ur(T)=m$, then the space $T(A^*)T(\a)\inv\subset \tend(B)$ is abelian.
It is also useful to use Ottaviani's formulation of Strassen's equations
\cite{ottrento}:
consider the map
\begin{align*}
T_{A}^{\ww} :B^*\ot A&\ra \La 2 A\ot C\\
\b\ot a &\mapsto a\ww T(\b).
\end{align*}
If $\tdim A=3$,  $\ur(T)\geq \frac 12\trank(T_{A}^{\ww})$.
If one  restricts $T$ to a $3$-dimensional subspace of $A^*$,
the same conclusion holds.
In general $\trank (T_{A}^{\ww})\leq (\aaa -1)\ur(T)$, because for
a rank one tensor $a\ot b\ot c$, $(a\ot b\ot c)_{A}^{\ww}(A\ot B^*)
=a\ww A\ot c$, i.e. $\trank (a\ot b\ot c)_{A}^{\ww}=\aaa-1$.

To deal with the case where $T(\a)$ is not invertible
in Strassen's formulation,   recall  that  a linear map
$f: B\ra C^*$ induces linear maps
$f^{\ww k}:\La k B\ra \La k C^*$, and  that  $\La{m-1}B\simeq B^*\ot \La mB$.
Thus  $f^{\ww (m-1)}:\La{m-1}B\ra \La{m-1}C^*$ may be identified with  (up to a
fixed choice of scale)
a linear map $B^*\ra C$, and thus its transpose may be identified with a linear
map
$C^*\ra B$. If $f$ is invertible, this linear map coincides up to scale  with
the inverse. In bases it
is given by the cofactor matrix of $f$.
So to obtain polynomials,  
use  $(T(\a)^{\ww m-1})^T: \La{m-1}C \ra \La{m-1}B^*$ in place of $T(\a)\inv$  by
identifying
$\La{m-1}C \simeq C^*$, $\La{m-1}B^*\simeq B$, see \cite[\S 3.8.4]{MR2865915}
for details.

As a module, as observed in  \cite{LMsecb}, Strassen's degree $m+1$
$A$-equations are
\be\label{strmod}
 S_{m-1,1,1}A^*\ot S_{2,1^{m-1}}B^*\ot S_{2,1^{m-1}}C^*.
\ene

\subsection{The flag condition}
Much of this paper will use the fact that $T\in A\ot B\ot C$ may be recovered up
to strict isomorphism  from
the linear space $T(A^*)\subset B\ot C$, and if $\tdim B=\tdim C$ and there exists $\a\in A^*$ with $T(\a)$ invertible,
$T$ may be recovered up to isomorphism from the space $T(A^*)T(\a)\inv\subset \tend(B)$.  In this regard, we recall:

\begin{proposition} \cite[Cor. 2.2]{MR2865915}\label{prop:tensorranktomatrix}
 There exist $r$ rank one
elements of $B\ot C$ such that $T(A^*)$ is contained in their span if and only
if $\bold R(T)\leq r$. Similarly, $\ur(T)\leq r$ if and only if there exists a
  curve
$E_t$ in the Grassmannian  $G(r,B\ot C)$, where for $t\neq 0$, $E_t$ is spanned by
$r$ rank one elements and $T(A^*)\subset  E_0$  (which is
defined by the compactness of the Grassmannian).
\end{proposition}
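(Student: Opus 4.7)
The plan is to reduce everything to the identity $T=\sum_{j}a_{j}\otimes T(\alpha_{j})$, valid for any pair of dual bases $\{\alpha_{j}\}\subset A^{*}$ and $\{a_{j}\}\subset A$, together with the observation that the rank one elements of $B\otimes C$ are exactly the tensors $b\otimes c$. For the rank statement, the direction $\bold R(T)\leq r\Rightarrow T(A^{*})\subset\operatorname{span}\{r\text{ rank ones}\}$ is immediate: $T=\sum_{i=1}^{r}a_{i}\otimes b_{i}\otimes c_{i}$ gives $T(\alpha)=\sum_{i}\alpha(a_{i})\,b_{i}\otimes c_{i}$ for every $\alpha\in A^{*}$, so $T(A^{*})\subset\operatorname{span}\{b_{i}\otimes c_{i}\}$. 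Conversely, if $T(A^{*})\subset\operatorname{span}\{b_{i}\otimes c_{i}:1\leq i\leq r\}$, choose dual bases $\{\alpha_{j}\},\{a_{j}\}$ and write $T(\alpha_{j})=\sum_{i}\lambda_{ji}b_{i}\otimes c_{i}$; the identity yields $T=\sum_{i}\bigl(\sum_{j}\lambda_{ji}a_{j}\bigr)\otimes b_{i}\otimes c_{i}$, a decomposition of rank at most $r$.

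For the $\Rightarrow$ direction on border rank, write $T=\lim_{t\to 0}T_{t}$ with $T_{t}=\sum_{i=1}^{r}a_{i}(t)\otimes b_{i}(t)\otimes c_{i}(t)$, and set $E_{t}:=\operatorname{span}\{b_{i}(t)\otimes c_{i}(t)\}$. After replacing $T_{t}$ by a generic perturbation within the rank-$\leq r$ locus, one may assume $\dim E_{t}=r$ on a punctured neighborhood of $0$, so $t\mapsto[E_{t}]$ is a morphism from the punctured disk into $G(r,B\otimes C)$. Projectivity of the Grassmannian (the valuative criterion of properness) extends this to a morphism at $t=0$, producing $E_{0}\in G(r,B\otimes C)$. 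For every $\alpha\in A^{*}$ one has $T_{t}(\alpha)\in E_{t}$ and $T_{t}(\alpha)\to T(\alpha)$; closedness of the total space of the tautological bundle in $(B\otimes C)\times G(r,B\otimes C)$ then forces $T(\alpha)\in E_{0}$, whence $T(A^{*})\subset E_{0}$.

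For the $\Leftarrow$ direction, given $E_{t}$ with the stated properties, trivialize the tautological rank-$r$ bundle over a neighborhood of $[E_{0}]$ to obtain a frame $f_{1}(t),\dots,f_{r}(t)$ of $E_{t}$ depending algebraically on $t$, with $\{f_{i}(0)\}$ a basis of $E_{0}$. Fixing dual bases $\{\alpha_{j}\},\{a_{j}\}$ and expanding $T(\alpha_{j})=\sum_{i}\mu_{ji}f_{i}(0)$, set $v_{j}(t):=\sum_{i}\mu_{ji}f_{i}(t)\in E_{t}$ and $T_{t}:=\sum_{j}a_{j}\otimes v_{j}(t)$. By construction $T_{t}(A^{*})\subset E_{t}$, so by the rank case $\bold R(T_{t})\leq r$, and $T_{t}\to T$ by continuity, giving $\ur(T)\leq r$.

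The hard part is the limiting step in the forward direction on border rank: one must extend the rational map $t\mapsto[E_{t}]$ across $t=0$ despite possible drops in the dimension of $\operatorname{span}\{b_{i}(t)\otimes c_{i}(t)\}$. This is handled either by a generic perturbation of the approximating family (tensors of rank exactly $r$ with linearly independent rank-one summands being dense in the rank-$\leq r$ locus) or by working intrinsically on the Chow variety of $r$ points on the Segre, which is projective and carries a natural rational map to $G(r,B\otimes C)$ that is a morphism on the open locus of independent points.
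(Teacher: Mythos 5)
The paper cites this result from \cite[Cor.~2.2]{MR2865915} without reproving it, so there is no in-paper proof to compare against; your argument follows the standard one in that reference. The rank case and the $\Leftarrow$ direction of the border-rank case are clean and correct: the identity $T=\sum_j a_j\otimes T(\alpha_j)$ and the local trivialization of the tautological bundle do exactly what is needed.

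The one place to tighten is the handling of possible dimension drop in the forward direction on border rank. You cannot literally ``replace $T_t$ by a generic perturbation within the rank-$\leq r$ locus,'' because any perturbation of $T_t$ changes the limit $\lim_{t\to 0}T_t$, and you need that limit to remain $T$. The correct fix keeps $T_t$ untouched and instead \emph{enlarges the span}: if $\tspan\{b_i(t)\otimes c_i(t)\}$ has generic dimension $s<r$, adjoin $r-s$ fixed rank-one elements $Y_{s+1},\dots,Y_r$ chosen so that $E_t:=\tspan\{b_i(t)\otimes c_i(t)\}+\tspan\{Y_j\}$ is $r$-dimensional for generic $t$; then $E_t$ is still spanned by rank ones, still contains $T_t(A^*)$, and the rest of your properness argument goes through unchanged. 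Your alternative via the Chow variety of $r$ points on the Segre is also correct and is closest in spirit to the incidence-variety argument in the cited source: one takes the closure $\bar\Sigma_r\subset G(r,B\otimes C)$ of the locus of $r$-planes spanned by $r$ rank-one elements and shows that the set of $T$ with $T(A^*)\subset E$ for some $E\in\bar\Sigma_r$ is closed and contains the rank-$\leq r$ tensors, hence contains $\hat\sigma_r$. Either way the proposition holds; just replace the ``perturbation'' sentence with the span-enlargement (or drop it in favor of the Chow-variety formulation you already sketch).
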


The following two results  appeared in \cite[Ex. 15.14]{BCS} and \cite[Cor.~18]{MR3503693}:

\begin{corollary}\label{hitsseg}  Let $\aaa=m$ and let
$T\in A\ot B\ot C$ be $A$-concise. Then
$\ur(T)=m$ implies that $T(A^*)\cap Seg(\BP B\times \BP C)\neq\emptyset$.
\end{corollary}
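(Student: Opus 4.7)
The plan is to deduce this as an immediate closedness consequence of Proposition \ref{prop:tensorranktomatrix}. First, since $T$ is $A$-concise, the map $T:A^*\to B\ot C$ is injective, so $T(A^*)$ is an honest $m$-dimensional subspace of $B\ot C$. By Proposition \ref{prop:tensorranktomatrix}, the hypothesis $\ur(T)=m$ furnishes a curve $E_t\subset G(m,B\ot C)$ such that for $t\neq 0$ the space $E_t$ is spanned by $m$ rank one elements of $B\ot C$, and such that $T(A^*)\subset E_0$. Since $\tdim E_0=m=\tdim T(A^*)$, in fact $T(A^*)=E_0$.

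Next, for every $t\neq 0$ the projectivization $\BP E_t$ meets $Seg(\BP B\times \BP C)$ in at least the $m$ points corresponding to the chosen rank one spanning vectors. So for $t\neq 0$ we have $\BP E_t\cap Seg(\BP B\times \BP C)\neq\emptyset$.

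Now I invoke the fact that the condition ``$\BP E\cap Seg(\BP B\times \BP C)\neq\emptyset$'' is a Zariski-closed condition on $E\in G(m,B\ot C)$. Indeed, consider the incidence variety
\[
\mathcal{I}=\{(E,[v])\in G(m,B\ot C)\times Seg(\BP B\times \BP C)\mid [v]\in \BP E\},
\]
with its projection $\pi:\mathcal{I}\to G(m,B\ot C)$. Because $Seg(\BP B\times \BP C)$ is projective, $\pi$ is a proper morphism, so $\pi(\mathcal{I})=\{E\mid \BP E\cap Seg\neq\emptyset\}$ is closed in the Grassmannian. Letting $t\to 0$ in the curve $E_t$, which lies in $\pi(\mathcal{I})$ for $t\neq 0$, gives $E_0=T(A^*)\in \pi(\mathcal{I})$, i.e., $T(A^*)\cap Seg(\BP B\times \BP C)\neq\emptyset$.

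The only subtle point is the closedness of $\pi(\mathcal{I})$, but this is standard from the properness of projective morphisms, so there is no serious obstacle. The key idea one is isolating is simply that being a limit of spaces spanned by rank one elements forces the limit space to contain at least one rank one element, even if the spanning configuration degenerates.
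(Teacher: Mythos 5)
Your proof is correct and follows essentially the same route as the paper's: both invoke Proposition \ref{prop:tensorranktomatrix} to produce the curve $E_t$, both use $A$-concision to identify $E_0=T(A^*)$, and both conclude by observing that the limit space must still meet the Segre. You simply spell out the closedness step (via the incidence variety and properness of the projection), whereas the paper treats that as an evident consequence of taking limits.
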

\begin{proof}
If   $E_t\in G(r,B\ot C)$ is spanned by rank one elements for all $t\neq 0$,
then when $t=0$, it must contain at least one rank one element.
But since $\aaa=m$, $T(A^*)=E_0$.
\end{proof}


\begin{corollary} \label{flagcor}
  Let $T\in   A\ot B\ot C$ with $\aaa=m$ be $A$-concise.  If $\ur(T)=m$, then
  there exists a complete flag $A_1\subset \cdots A_{m-1}\subset A_m= A^* $,
with $\tdim A_j=j$,  such that
$\BP T(A_j)\subset  \s_j(Seg(\BP B\times \BP C))$.
\end{corollary}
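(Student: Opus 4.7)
The plan is to apply Proposition~\ref{prop:tensorranktomatrix} to a curve of $m$-planes spanned by rank one tensors, and then to extract the required flag in one shot via a closedness argument on a flag-type incidence variety. The $A$-conciseness hypothesis enters only to identify $T(A^*)$ with the limiting member of the curve and to transport the resulting flag in $B\ot C$ back to a flag in $A^*$.

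By Proposition~\ref{prop:tensorranktomatrix}, there is a curve $t\mapsto E_t\in G(m,B\ot C)$ with $T(A^*)\subset E_0$ such that for every $t\neq 0$ the subspace $E_t$ is spanned by $m$ rank one tensors $r_1(t),\ldots,r_m(t)\in B\ot C$. Since $T$ is $A$-concise, the map $T\colon A^*\to B\ot C$ is injective, hence $\tdim T(A^*)=m$ and $T(A^*)=E_0$. Introduce the incidence variety
$$
Z:=\Bigl\{(E,F_1,\ldots,F_{m-1})\in G(m,B\ot C)\times\prod_{j=1}^{m-1}G(j,B\ot C)\,:\,F_1\subset\cdots\subset F_{m-1}\subset E,\ F_j\subset\hat\s_j(Seg(\BP B\times\BP C))\Bigr\}.
$$
Each condition is Zariski closed: the incidence relations $F_j\subset F_{j+1}\subset E$ cut out closed subvarieties of the appropriate products of Grassmannians, and $\{F\in G(j,B\ot C):F\subset\hat\s_j\}$ is closed because $\hat\s_j$ itself is closed in $B\ot C$. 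Hence $Z$ is projective and the projection $\pi\colon Z\to G(m,B\ot C)$ has closed image.

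For each $t\neq 0$, the flag built from the spanning rank one tensors, $(E_t,\langle r_1(t)\rangle,\langle r_1(t),r_2(t)\rangle,\ldots,\langle r_1(t),\ldots,r_{m-1}(t)\rangle)$, is a point of $Z$ above $E_t$, so $E_t\in\pi(Z)$. Letting $t\to 0$ and using that $\pi(Z)$ is closed gives $E_0=T(A^*)\in\pi(Z)$, yielding subspaces $F_1\subset\cdots\subset F_{m-1}\subset T(A^*)$ with $\tdim F_j=j$ and $\BP F_j\subset\s_j(Seg(\BP B\times\BP C))$. Setting $A_j:=T\inv(F_j)$ for $j<m$ and $A_m:=A^*$, injectivity of $T$ forces $\tdim A_j=j$, while $\BP T(A_j)=\BP F_j\subset\s_j$ by construction, which is the required flag. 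The one step calling for minor care is observing that the flag containments and the $\hat\s_j$-membership conditions can be imposed as simultaneously closed conditions, so that a single limit argument yields the entire nested chain at once rather than one layer at a time; beyond this, the argument is a direct consequence of Proposition~\ref{prop:tensorranktomatrix} and $A$-conciseness.
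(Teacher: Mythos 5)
Your proof is correct and rests on the same underlying idea as the paper's: build the flag out of the partial spans of rank one spanning elements at $t\neq 0$, and pass to the limit using closedness. The paper's proof is terser: it writes $T(A^*)=\lim_{t\to 0}\operatorname{span}\{X_1(t),\dots,X_m(t)\}$ with each $X_j(t)$ of rank one, sets $A_k = T^{-1}\bigl(\lim_{t\to 0}\operatorname{span}\{X_1(t),\dots,X_k(t)\}\bigr)$, and observes that each partial limit stays in $\sigma_k$ and has the correct dimension. What your incidence-variety formulation buys you is a cleaner handling of the implicit step that the rank one spanning elements $X_j(t)$ can be chosen coherently along the curve: by forming the projective flag correspondence $Z$ and noting $\pi(Z)$ is closed, you only need that for each individual $t\neq 0$ \emph{some} admissible flag exists above $E_t$, with no choice of the $r_j(t)$'s as functions of $t$. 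You also use explicitly that a $j$-plane spanned by $j$ rank one tensors is automatically contained in $\hat\sigma_j$, which the paper leaves tacit. Both proofs are correct; yours is a modest but genuine packaging improvement over the paper's more informal limit argument.
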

\begin{proof}
Write $T(A^*)=\tlim_{t\ra 0}\tspan\{ X_1(t)\hd X_m(t)\}$ where $X_j(t)\in B\ot C$ have
rank one.
Then take $\BP A_k=\BP \tlim_{t\ra 0}\tspan\{ X_1(t)\hd X_k(t)\}\subset \BP T(A^*)$.
Since $\BP \{ X_1(t)\hd X_k(t)\}\subset\s_k(Seg(\BP B\times \BP C))$ the same
must be true in the limit,
and each limit must have the correct dimension because $\tdim \tlim_{t\ra 0}
\tspan \{
X_1(t)\hd X_m(t)\}=m$.
\end{proof}

Call the implication of Corollary \ref{flagcor} the {\it flag condition}.

  There are infinitesimal and scheme-theoretic analogs of Corollary
\ref{flagcor}, but we were unable to state them in general in a useful manner.
  Here is a special case that indicates the general case. For another example,
see \S\ref{flagexsect}. For a variety $X\subset \BP V$, and a smooth point $x\in
X$,
  $\hat T_xX\subset V$ denotes its affine tangent space.

  \begin{proposition}
 \label{infflagprop} Let $T\in  A\ot B\ot C$ with $\aaa=m$  be $A$-concise. If
$\ur(T)=m$
and $T(A^*)\cap Seg(\BP B\times \BP C)=[X_0]$ is a single point, then
$\BP (T(A^*)\cap \hat T_{[X_0]}Seg(\BP B\times \BP C))$ must contain
a line.
\end{proposition}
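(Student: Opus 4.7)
My plan is to upgrade Proposition~\ref{prop:tensorranktomatrix} from a set-theoretic to a scheme-theoretic statement by tracking the $m$ rank-one witnesses through the limit via the Hilbert scheme. Write $E_0:=T(A^*)$. By Proposition~\ref{prop:tensorranktomatrix} there is a curve $t\mapsto E_t$ in $G(m,B\ot C)$, defined on a small disc $\Delta$ about $0$, with $E_0$ the given subspace and, for $t\neq 0$, $E_t$ spanned by $m$ rank-one elements $X_1(t),\ldots,X_m(t)$; after a finite base change I may take these to be analytic (Puiseux) sections. They are linearly independent for $t\neq 0$, hence give $m$ distinct points $[X_i(t)]\in Seg(\BP B\times \BP C)$. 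Because $E_t\to E_0$ in the Grassmannian while $\BP E_0\cap Seg=\{[X_0]\}$, every accumulation of $[X_i(t)]$ as $t\to 0$ equals $[X_0]$.

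Next I would pass to the Hilbert scheme. The reduced length-$m$ subscheme $\Xi_t:=\{[X_1(t)],\ldots,[X_m(t)]\}\subset Seg$ defines a morphism $\Delta^{*}\to\operatorname{Hilb}^{m}(Seg)$. By properness of the Hilbert scheme this extends across $t=0$ to a length-$m$ scheme $\Xi_0$, which is supported at the single point $[X_0]$. The universal family $\mathcal{E}\subset\Delta\times\BP(B\ot C)$ of projective subspaces $\BP E_t$ is closed in the product, and for each $t\neq 0$ one has $\Xi_t\subset\BP E_t$; by closedness the inclusion persists in the limit, giving $\Xi_0\subset\BP E_0\cap Seg$.

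Since $m\geq 2$, the Artinian local scheme $\Xi_0$ at $[X_0]$ has length at least $2$, hence nonzero Zariski tangent space at $[X_0]$. That tangent space is contained in $T_{[X_0]}\BP E_0\cap T_{[X_0]}Seg$, which, translated to affine tangent spaces, is $(E_0\cap\hat T_{[X_0]}Seg)/\langle X_0\rangle$. Its nontriviality forces $\dim(T(A^*)\cap\hat T_{[X_0]}Seg)\geq 2$, so its projectivization contains a line, as required.

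The main technical obstacle is the extension of $\Xi_t$ across $t=0$ together with the verification that $\Xi_0\subset\BP E_0$; making this rigorous uses the valuative criterion of properness for the relative Hilbert scheme of the family $\{\BP E_t\}$. A parallel, more hands-on route would Puiseux-expand $X_i(t)=X_0+t^{a_i}v_i+\cdots$ and take limits of rescaled combinations $(X_i(t)-X_1(t))/t^{b}$ to produce vectors of $\hat T_{[X_0]}Seg\cap E_0$ independent of $X_0$. The awkward case there is when every first-order limit is a scalar multiple of $X_0$, which would force iterated rescaling to higher order; this bookkeeping is precisely what the Hilbert-scheme argument absorbs automatically.
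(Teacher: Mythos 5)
Your proof is correct and rests on the same geometric idea as the paper's: once $\BP T(A^*)\cap Seg$ is a single point, all $m$ rank-one witnesses $X_i(t)$ must collapse to $[X_0]$, and this collapse forces a tangent direction to appear in $T(A^*)$. The difference is packaging. The paper takes just two of the witnesses, forms $\lim_{t\to 0}\tspan\{X_1(t),X_2(t)\}$ in $G(2,B\ot C)$, and asserts (without spelling it out) that this two-plane lies in $\hat T_{[X_0]}Seg$ as well as in $T(A^*)$ --- essentially the classical ``secant lines degenerate to tangent lines'' fact. You instead push all $m$ points into $\operatorname{Hilb}^m(Seg)$ and read off a nonzero Zariski tangent vector to the limiting length-$m$ fat point; scheme-theoretic containment of that fat point in $\BP T(A^*)$ and in $Seg$ then yields the required line. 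What your version buys is that the Hilbert scheme automatically absorbs the iterated Puiseux rescalings that are needed when the first-order parts of $X_1(t)$ and $X_2(t)$ coincide (the ``awkward case'' you flag); the paper's version is shorter but implicitly leans on that degeneration fact. One small remark: you could take $\operatorname{Hilb}^2$ of two of the points rather than $\operatorname{Hilb}^m$ of all of them --- the argument would match the paper even more closely and the length-$2$ Artinian local ring at $[X_0]$ already gives the tangent vector. Also note both proofs silently use $m\geq 2$; the statement is vacuous (indeed false as worded) for $m=1$.
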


Call the implication of Proposition \ref{infflagprop} the {\it infinitesimal flag condition}.

\begin{proof}
Say $T (A^*)$ were the limit of $\tspan\{ X_{1 }(t)\hd X_{m }(t)\}$ with each $X_{j }(t)$ of rank one.
Then since $\BP T(A^*)\cap Seg(\BP B\times \BP C)=[X_0]$,
we must have each $X_{j }(t)$ limiting to $X_0$. But then
$\tlim_{t\ra 0} \tspan\{X_{1 }(t), X_{2 }(t)\}$, which must be two-dimensional,
must be contained in  $\hat T_{[X_0]}Seg(\BP B\times \BP C)$
and $T(A^*)$.  \end{proof}

\begin{remark} Because these conditions deal with intersections,
they are difficult to write down as polynomials. We will use them for tensors
with simple expressions where they can be checked.
\end{remark}

\subsection{Review and clarification of results in \cite{LMsecb}}
Throughout this subsection we assume $\bbb=\ccc=m$.
To a $1_A$-generic tensor $T\in A\ot B\ot C$, fixing $\alpha_0\in A^*$ as in
Definition
\ref{def:concisegeneric},   associate a subspace of endomorphisms of $B$:
$$\cE_{\alpha_0}(T):=\{T(\alpha)  T(\alpha_0)^{-1} \mid \alpha\in A^*\}\subset
\tend(B).$$

Note that $T$ may be recovered up to isomorphism from $\cE_{\a_0}(T)$.

\begin{lemma}\label{lem:com} Let $T\in A\ot B\ot C$ be $1_A$-generic and assume $\trank (
T(\a_0))=m$.
\begin{enumerate}
\item If   $\ur(T)=m$ then $\cE_{\alpha_0}(T)$ is commutative.
\item If $\cE_{\alpha_0}(T)$ is commutative  then $\cE_{\alpha_0'}(T)$ is
commutative for any $\alpha_0'\in A^*$ such that $\trank (T(\alpha_0'))=m$.
\end{enumerate}
\end{lemma}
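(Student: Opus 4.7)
The plan for part (1) is to invoke Strassen's equations directly: since $\ur(T) = m$ and $T(\alpha_0)$ is invertible, the bound $\trank[T(\alpha_1) T(\alpha_0)^{-1}, T(\alpha_2) T(\alpha_0)^{-1}] \leq 2(\ur(T) - m) = 0$ from \S\ref{streqnssect} forces every commutator in $\cE_{\alpha_0}(T)$ to vanish. A self-contained geometric alternative would use Proposition~\ref{prop:tensorranktomatrix} to write $T(A^*) = \lim_{t \to 0} E_t$ with each $E_t$ spanned by $m$ rank one tensors, choose $X(t) \in E_t$ tending to the invertible element $T(\alpha_0)$, and observe that in a basis adapted to the rank one decomposition of $X(t)$ the endomorphism space $E_t X(t)^{-1}$ consists of simultaneously diagonal matrices, hence is commutative; commutativity is a Zariski closed condition and therefore passes to the $t = 0$ limit.

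For part (2), the plan is to relate the two endomorphism spaces by a single multiplicative factor and then exploit the assumed commutativity. I would introduce the transition element $M := T(\alpha_0) T(\alpha_0')^{-1} \in \tend(B)$ and record two immediate membership facts: first, $M \in \cE_{\alpha_0'}(T)$, obtained by the choice $\alpha = \alpha_0$ in the defining formula; and second, $M^{-1} = T(\alpha_0') T(\alpha_0)^{-1} \in \cE_{\alpha_0}(T)$, obtained by the choice $\alpha = \alpha_0'$. The factorization $T(\alpha) T(\alpha_0')^{-1} = \bigl(T(\alpha) T(\alpha_0)^{-1}\bigr)\, M$ then yields the clean identity $\cE_{\alpha_0'}(T) = \cE_{\alpha_0}(T)\cdot M$ of subsets of $\tend(B)$.

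The final step is to combine these observations. Because $\cE_{\alpha_0}(T)$ is commutative by hypothesis and $M^{-1}$ belongs to it, the element $M$ commutes with every element of $\cE_{\alpha_0}(T)$. Consequently, for any $N_1, N_2 \in \cE_{\alpha_0}(T)$,
$$ (N_1 M)(N_2 M) = N_1 N_2 M^2 = N_2 N_1 M^2 = (N_2 M)(N_1 M), $$
which shows $\cE_{\alpha_0'}(T)$ is commutative. No serious obstacle is anticipated; the one step that requires a moment's reflection is recognising that $M^{-1}$ already lies inside the abelian space $\cE_{\alpha_0}(T)$, which is what makes right-multiplication by $M$ preserve commutativity.
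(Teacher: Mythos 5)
Your part (1) matches the paper exactly: both invoke Strassen's inequality $\trank[T(\alpha_1)T(\alpha_0)^{-1}, T(\alpha_2)T(\alpha_0)^{-1}] \leq 2(\ur(T)-m) = 0$ and read off commutativity. (The geometric alternative you sketch is in fact what the paper uses to prove Proposition \ref{thm:brankdiag}, not Lemma \ref{lem:com}; it also works, but the Strassen-equation one-liner is the paper's choice here.)

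For part (2) you take a genuinely different and, I think, cleaner route. The paper derives from commutativity of $\cE_{\alpha_0}(T)$ the two identities $T(\alpha_0)^{-1} = T(\alpha_0')^{-1} T(\alpha_j) T(\alpha_0)^{-1} T(\alpha_0') T(\alpha_j)^{-1}$ for $j=1,2$ (valid when $T(\alpha_j)$ is invertible), substitutes them on the two sides of $T(\alpha_1)T(\alpha_0)^{-1}T(\alpha_2) = T(\alpha_2)T(\alpha_0)^{-1}T(\alpha_1)$, and then must dispose of the non-invertible cases by a limiting argument over the Zariski open set where $T(\alpha_j)$ is invertible. You instead isolate the transition element $M = T(\alpha_0)T(\alpha_0')^{-1}$, note that $M^{-1} = T(\alpha_0')T(\alpha_0)^{-1}$ lies inside the abelian space $\cE_{\alpha_0}(T)$ (hence $M$ commutes with all of $\cE_{\alpha_0}(T)$, since $M^{-1}N = NM^{-1}$ is equivalent to $MN = NM$), and observe the identity of linear subspaces $\cE_{\alpha_0'}(T) = \cE_{\alpha_0}(T)\cdot M$. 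Commutativity of the right-translate then follows from the short computation $(N_1M)(N_2M) = N_1N_2M^2 = N_2N_1M^2 = (N_2M)(N_1M)$. What your approach buys is that it works uniformly for all $\alpha_1, \alpha_2$ with no case distinction: you never need $T(\alpha_j)$ invertible, so the limit argument in the paper's last sentence disappears. What the paper's computation buys is a completely explicit formula-level verification that stays close to the raw commutator identities, which some readers may prefer, but your structural argument is shorter and avoids the density argument. Both are correct.
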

\begin{proof} The first assertion is just a restatement of
Strassen's equations. For the second,
say $\cE_{\alpha_0}(T)$ is commutative, so
\be\label{havethis}
T(\a_1)T(\a_0)\inv T(\a_2)=T(\a_2)T(\a_0)\inv T(\a_1)
\ene
for all $\a_1,\a_2\in A^*$.
We need to show that
\be\label{wantthis}
T(\a_1)T(\a_0')\inv T(\a_2)=T(\a_2)T(\a_0')\inv T(\a_1)
\ene
for all $\a_1,\a_2\in A^*$.
Since $\cE_{\alpha_0}(T)$ is commutative, we have
$T(\a_0')T(\a_0)\inv T(\a_2)=T(\a_2)T(\a_0)\inv T(\a_0')$
and $T(\a_0')T(\a_0)\inv T(\a_1)=T(\a_1)T(\a_0)\inv T(\a_0')$, i.e., assuming
$T(\a_1),T(\a_2)$ are invertible,
$$T(\a_0)\inv = T(\a_0')\inv T(\a_j)T(\a_0)\inv T(\a_0')T(\a_j)\inv\ \ {\rm for\
\ }   j=1,2.
$$
Substituting the $j=2$ case to the left hand side of \eqref{havethis} and the
$j=1$ case
to the right hand side yields  \eqref{wantthis}. The cases where $T(\a_j)$ are
not invertible
follow by taking limits, as the $\a$ with $T(\a)$ invertible form a Zariski
open  subset of $T(A^*)$.
\end{proof}

If $U\subset B^*\ot B$ is commutative, then we may consider it as
an abelian Lie-subalgebra of $\fgl(B)$.

Define
\begin{align*}Abel_A:&=\BP \{ T\in A\ot B\ot C\mid T{\rm\ is\ } A-{\rm
concise,\
  }\\
&\ \ \exists \a_0\in A^*   \ {\rm with \ }
\trank(T(\a_0))=m, {\rm \ and \ }  \cE_{\a_0}(T)\subset \fgl(B) {\rm\ is \ an  \
abelian\ Lie\
algebra}\}
\\
&=\BP \{ T\in A\ot B\ot C\mid T{\rm\ is\ } A-{\rm  concise},\ 1_A-\rm{generic \ and\ }
\\
&\ \ \forall \a\in
A^*  \ {\rm with} \
\trank(T(\a))=m,  \cE_{\a}(T)\subset \fgl(B) {\rm\ is\ an \ abelian\ Lie\
algebra}\}
\end{align*}

The second  equality  follows from Lemma \ref{lem:com}(2).

\begin{definition}\label{abeltens} We say $T\in A\ot B\ot C$ is an {\it
$A$-abelian tensor}
if $T\in Abel_A$.
\end{definition}

$Abel_A$ is a Zariski closed subset of the set of concise $1_A$-generic tensors,
namely
the zero set of Strassen's equations. Its closure in $A\ot B\ot C$ is a
component of the zero set of
Strassen's equations.

Define
\begin{align*}
Diag_A^0:&=\BP \{ T\in A\ot B\ot C\mid T{\rm\ is\ } A-{\rm concise, \
}\\
&\ \ \ \ \exists \a_0\in A^* \ {\rm with \ }
\trank(T(\a_{ 0}))=m,  {\rm \  and \ } \cE_{\a_0}(T)\subset \fgl(B){\rm \ is\
diagonalizable}\},
\\
&=\BP \{ T\in A\ot B\ot C\mid T{\rm\ is\ } A-{\rm concise},\ 1_A-\rm{generic \ and\ }\\
&\  \ \ \ \forall \a\in
A^*  \ {\rm with \ }
\trank(T(\a))=m,  \ \cE_\a(T)\subset \fgl(B){\rm \ is\
diagonalizable}\}.
\end{align*}

Let $Diag_A$ be the Zariski closure of
  $Diag_A^0$ and let  $Diag_A^g$ be the intersection of $Diag_A$ with
  the set of concise $1_A$-generic tensors.

\begin{proposition}\label{thm:brankdiag} \cite{LMsecb} Let $A,B,C=\BC^m$, let
$T\in A\ot B\ot C$ be
concise  and   $1_A$-generic. Then the following are equivalent:
\begin{enumerate}
\item $\ur(T)=m$,

\item $T\in Diag_A^g$.
\end{enumerate}
Moreover, an abelian $m$-dimensional subspace of $\fgl(B)$ is in the closure of
the diagonalizable subspaces if and only if it arises as $\cE_{\a}(T)$ for  some
concise, border rank $m$
tensor $T\in A\ot B\ot C$.
\end{proposition}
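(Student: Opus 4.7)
The approach is to identify $Diag_A^0$ with the set of concise, $1_A$-generic, rank-$m$ tensors; once this is done, both parts of the proposition reduce to passage to Zariski closures and a standard correspondence between tensors and endomorphism subspaces.

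First I would prove the two inclusions by computation in a good basis. Let $T = \sum_{i=1}^{m} a_i \ot b_i \ot c_i$ be a concise, $1_A$-generic, rank-$m$ tensor with $T(\a_0)$ invertible. The identity $T(\a_0) = \sum_i \a_0(a_i)\, b_i \ot c_i$ forces, after rescaling so that $\a_0(a_i) = 1$, both $\{b_i\}$ and $\{c_i\}$ to be bases (otherwise $T(\a_0)$ would have rank less than $m$). In the basis $\{b_i\}$ of $B$ and the basis of $C^*$ dual to $\{c_i\}$, the map $T(\a_0)$ is the identity; hence for every $\a$, $T(\a) T(\a_0)^{-1} = \operatorname{diag}(\a(a_1), \ldots, \a(a_m))$, and $\cE_{\a_0}(T)$ is the full algebra of diagonal matrices in this basis. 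Conversely, given $T \in Diag_A^0$, pick a basis $\{b_i\}$ of $B$ diagonalizing $\cE_{\a_0}(T)$ and a basis $\{c_i\}$ of $C$ so that $T(\a_0) = \sum b_i \ot c_i$; each $T(\a)$ then takes the form $\sum_i \lambda_i(\a)\, b_i \ot c_i$ for linear forms $\lambda_i \in A$, and $A$-conciseness forces the $\lambda_i$ to be linearly independent, exhibiting $T = \sum_i \lambda_i \ot b_i \ot c_i$ as rank $m$.

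Equivalence (1)$\Leftrightarrow$(2) then follows because both $Diag_A^g$ and the locus of concise, $1_A$-generic, border rank-$m$ tensors equal the intersection of the concise, $1_A$-generic stratum with the Zariski closure of the common set $Diag_A^0$. For the ``moreover'' statement, I would use the bijection (once $\a_0$ is fixed) between concise, $1_A$-generic tensors $T$ normalized so that $T(\a_0) = I_B$ and $m$-dimensional subspaces $E \subset \fgl(B)$ containing $I$, given by $T \mapsto T(A^*) = \cE_{\a_0}(T)$ with inverse sending $E$ to the tensor dual to the inclusion $E \hookrightarrow B \ot C$ under a linear identification $A^* \cong E$. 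Under this bijection, rank-$m$ tensors correspond precisely to diagonalizable subspaces by the first step, so Zariski closures match.

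The main obstacle is verifying that the correspondence $T \leftrightarrow \cE_{\a_0}(T)$ is algebraic in both directions and that Zariski limits in the Grassmannian of $m$-planes in $\fgl(B)$ match Zariski limits of tensors in the concise $1_A$-generic stratum; Lemma~\ref{lem:com} then renders the construction independent of the specific $\a_0$ chosen along an approximating sequence, which is needed to identify the Grassmannian closure of the diagonalizable locus with the image of $\overline{\sigma_m}$ intersected with the $1_A$-generic stratum.
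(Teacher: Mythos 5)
Your approach is genuinely different from the paper's, and it is correct in outline. The paper's proof is a single direct construction: given a curve $E_t \to T(A^*)$ in $G(m,B\ot C)$ with each $E_t$ spanned by $m$ rank-one elements, it picks $X_t\in E_t$ with $X_t\to T(\alpha)$ invertible and observes that $E_t X_t^{-1}$ is a curve of \emph{diagonalizable} subalgebras converging to $\cE_\alpha(T)$ (each $E_tX_t^{-1}$ is spanned by $m$ rank-one endomorphisms and contains the identity, hence is spanned by orthogonal idempotents). That handles the forward implication directly and leaves the rest implicit. You instead prove the exact set-theoretic identity
\[
Diag_A^0 \;=\; \{\text{concise, }1_A\text{-generic, rank-}m\text{ tensors}\}
\]
by a basis computation --- a clean statement the paper never makes explicit --- and then pass to Zariski closures, from which both implications and the ``moreover'' fall out simultaneously. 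Your approach buys a more structural picture (rank-$m$ tensors $\leftrightarrow$ diagonal algebras, border-rank-$m$ tensors $\leftrightarrow$ $Red(m)$), whereas the paper's buys brevity and an explicit limiting curve without worrying about the algebraicity of the global correspondence.

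The ``main obstacle'' you flag is real but closes quickly, and more easily than you suggest. Fix $\alpha_0\in A^*$ with $T(\alpha_0)$ invertible; on the \emph{open} locus $U=\{T' : T'\text{ is }A\text{-concise and }T'(\alpha_0)\text{ is invertible}\}$ the map $\phi: T'\mapsto T'(A^*)\,T'(\alpha_0)^{-1}\in G(m,\fgl(B))$ is regular. A tensor of rank $<m$ has $T'(\alpha_0)$ singular, so $\{\text{rank}\leq m\}\cap U$ consists exactly of the concise rank-$m$ tensors in $U$, and $\phi$ carries this set onto the diagonalizable $m$-planes by your first step. Since $T\in\sigma_m\cap U\subset\overline{\{\text{rank}\leq m\}\cap U}$ and $\phi$ is continuous, $\phi(T)=\cE_{\alpha_0}(T)\in\overline{\{\text{diagonalizable }m\text{-planes}\}}=Red(m)$. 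In particular there is no need to vary $\alpha_0$ along the approximating sequence nor to invoke Lemma~\ref{lem:com} for this step: a single $\alpha_0$ suffices once you work inside the fixed open set $U$. Lemma~\ref{lem:com} only enters when one wants to pass between different choices of $\alpha_0$ for the same $T$, which your argument can avoid entirely.
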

 
\begin{proof}
Since the proof in the literature is not explicit and we use 
it frequently, we show that if $T\in A\ot B\ot C$ is concise, 
then $\cE_{\a}(T)$ belongs to the limit of diagonalizable subalgebras. We know there exists a 
curve of $m$-tuples of rank one tensors $(T_i^t)_{i=1}^m$ such that in the Grassmannian 
$$
\tlim_{t\rightarrow 0}\tspan\{T_i^t(A^*)\}\rightarrow  T(A^*).$$
In particular, there exist $X_t \in \tspan\{T_i^t(A^*)\}$, such that $X_t\rightarrow T(\alpha)$ and we may assume that $X_t$ 
are invertible. Then, $
\tspan\{T_i^t(A^*)\} X_t^{-1}$ is a curve of diagonalizable algebras converging to $\cE_{\a}(T)$.
\end{proof}

\subsection{The $\tend$-closed condition}
Throughout this subsection we assume $\bbb=\ccc=m$.
Define
$$ {End-Abel}_A:  =\BP \{ T\in Abel_A \mid \exists \a\in A^*
{\rm \ with \ } \trank(T(\a))=m,{\rm \ and\ }
\cE_{\a}(T) {\rm \ is \ closed \ under \ composition } \}.
$$

\begin{remark} There was   ambiguity in  the definition of $Comm_{\aaa,\bbb}$ in
\cite{LMsecb} that is clarified by the above notions which replace it.
\end{remark}

The following Proposition   essentially dates back to Gerstenhaber
\cite{MR0132079}. It is
utilized in \cite[\S 5]{MR3503693}   to obtain explicit abelian
subspaces that
are not in $Red(m)$, see \S\ref{abelalgvalg}.

\begin{proposition}\label{closedcor}
If $T\in A\ot B\ot C$, with $\bbb=\ccc=m$  is $1_A$-generic, $\ur(T)=m$ and
$\trank(T(\a_0))=m$, then
$\cE_{\alpha_0}(T)$ is closed under composition.
\end{proposition}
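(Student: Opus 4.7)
The plan is to bootstrap from Proposition \ref{thm:brankdiag}, which already tells us that $\cE_{\a_0}(T)$ is a limit of diagonalizable $m$-dimensional subspaces of $\fgl(B)$. Closure under composition should then follow from the fact that each approximating subspace is itself closed under composition, together with continuity of matrix multiplication.

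Concretely, the proof of Proposition \ref{thm:brankdiag} constructs subspaces $U_t = \tspan\{T_i^t(A^*)\}\, X_t^{-1}$ converging in the Grassmannian $G(m,\fgl(B))$ to $\cE_{\a_0}(T)$, with $X_t$ invertible and lying in $\tspan\{T_i^t(A^*)\}$. Two observations: first, $I = X_t X_t^{-1}$ lies in $U_t$; second, since $\tspan\{T_i^t(A^*)\}$ is spanned by the $m$ rank-one matrices $b_i^t\ot c_i^t$ coming from a rank-$m$ decomposition whose suitable linear combination equals $X_t$, a short computation shows $U_t = g_t D g_t^{-1}$ for some $g_t \in \GL(B)$, where $D \subset \fgl(B)$ is the subalgebra of diagonal matrices. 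In particular each $U_t$ is closed under composition.

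It remains to pass to the limit. The locus in $G(m,\fgl(B))$ consisting of subspaces closed under composition is Zariski closed, so $\cE_{\a_0}(T)$ inherits this property. Directly: given $X,Y \in \cE_{\a_0}(T)$, one can lift to $X_t, Y_t \in U_t$ with $X_t \to X$ and $Y_t \to Y$ (for instance by orthogonal projection onto $U_t$, which depends continuously on the Grassmannian point). Then $X_t Y_t \in U_t$ and $X_t Y_t \to XY$, forcing $XY \in \cE_{\a_0}(T)$.

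The only real subtlety is extracting from the proof of Proposition \ref{thm:brankdiag} the fact that the approximating subspaces $U_t$ are honestly diagonalizable (and contain $I$), rather than merely abelian; once that is in hand the conclusion is purely a matter of continuity of matrix multiplication.
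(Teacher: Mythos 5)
Your argument is correct and is essentially the paper's proof, just spelled out in more detail: the paper's entire proof is the two-line observation that diagonalizable subalgebras are closed under composition and that being closed under composition is a Zariski closed condition in the Grassmannian, which is exactly what you establish by exhibiting $U_t = g_t D g_t^{-1}$ and passing to the limit.
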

\begin{proof}
Each diagonalizable Lie algebra is closed under composition. The property
of being closed under composition is a Zariski closed property. \end{proof}

\begin{proposition}\label{cor:equal}
Let $T\in   End-Abel_A$ and
$\trank (T(\a_0))=m$.
Then $\cE_{\alpha_0}(T)=\cE_{\alpha_0'}(T)$ for any $\alpha_0'\in A^*$ such that $\trank (T(\alpha_0'))=m$.  In particular,
$$  End-Abel_A
 =\BP \{ T\in Abel_A \mid \forall \a\in A^* {\rm \ with \ } \trank(T(\a))=m, 
\cE_{\a}(T) {\rm \ is \ closed \ under \ composition } \}.
$$
\end{proposition}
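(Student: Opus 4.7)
The plan is to prove that under the hypotheses $\cE_{\a_0}(T)$ and $\cE_{\a_0'}(T)$ coincide as subsets of $\tend(B)$; once this is established, both assertions follow at once. As a first step I would record the structural facts about $\cE_{\a_0}(T)$: $A$-conciseness (built into $Abel_A$) together with invertibility of $T(\a_0)$ makes $\a\mapsto T(\a)T(\a_0)^{-1}$ injective, so $\cE_{\a_0}(T)$ has dimension $m$; it contains the identity since $T(\a_0)\in T(A^*)$; and by hypothesis it is closed under composition. Thus $\cE_{\a_0}(T)$ is an $m$-dimensional commutative unital subalgebra of $\tend(B)$.

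The main step is to show that the element $x:=T(\a_0')T(\a_0)^{-1}$, which already lies in $\cE_{\a_0}(T)$ by construction, is invertible \emph{inside} that subalgebra rather than just in $\tend(B)$. Since $x$ is a product of two invertible endomorphisms it is invertible in $\tend(B)$, so the constant term of its characteristic polynomial is nonzero; by Cayley--Hamilton $x^{-1}$ is then a polynomial in $x$ and the identity, and consequently $x^{-1}\in\cE_{\a_0}(T)$. Rewriting
\begin{equation*}
\cE_{\a_0'}(T) \;=\; T(A^*)\,T(\a_0')^{-1} \;=\; \bigl(T(A^*)\,T(\a_0)^{-1}\bigr)\cdot\bigl(T(\a_0)\,T(\a_0')^{-1}\bigr) \;=\; \cE_{\a_0}(T)\cdot x^{-1} \;\subseteq\; \cE_{\a_0}(T),
\end{equation*}
and noting that both sides have dimension $m$, the containment is an equality.

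The ``in particular'' clause is then immediate: once the sets $\cE_{\a_0}(T)$ and $\cE_{\a_0'}(T)$ agree, closure under composition for one is literally closure under composition for the other, so the existential witness promoted in the definition of $End$-$Abel_A$ propagates to every $\a\in A^*$ for which $T(\a)$ has rank $m$. The one conceptually nontrivial ingredient is the Cayley--Hamilton extraction of $x^{-1}$ inside $\cE_{\a_0}(T)$; everything else is bookkeeping that exploits $T(\a_0),T(\a_0')\in T(A^*)$ to move inverses between the two presentations.
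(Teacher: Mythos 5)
Your proof is correct, and it reaches the key containment $\cE_{\a_0'}(T)\subseteq \cE_{\a_0}(T)$ by a genuinely different mechanism than the paper. The paper works directly with the closure identity: for any $\a\in A^*$ there is $\a'\in A^*$ with $T(\a)T(\a_0')\inv T(\a_0)T(\a_0')\inv = T(\a')T(\a_0')\inv$, and right-multiplying by $T(\a_0')T(\a_0)\inv$ immediately gives $T(\a)T(\a_0')\inv = T(\a')T(\a_0)\inv$ — no identity element, no inverses inside the algebra, no Cayley--Hamilton. You instead upgrade $\cE_{\a_0}(T)$ to an $m$-dimensional unital subalgebra, use Cayley--Hamilton to see that it contains the inverse of its invertible element $x=T(\a_0')T(\a_0)\inv$, and then absorb the right factor $x\inv$ via closure under composition; this is slightly heavier, but it isolates a reusable fact (an End-closed subspace containing the identity contains the inverses of its invertible elements) and makes explicit where $A$-conciseness enters (the dimension count, which both proofs need at the end). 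One small caveat, which is a labeling issue rather than a gap: the definition of $End-Abel_A$ is existential, so the closure hypothesis is a priori available only at some witness $\a_w$ with $\trank(T(\a_w))=m$. Your argument (like the paper's, whose displayed computation invokes closure at $\a_0'$) should be run with that witness as the base point; this yields $\cE_{\a_w}(T)=\cE_{\a}(T)$ for every $\a$ with $\trank(T(\a))=m$, whence any two such spaces coincide and the ``in particular'' statement follows exactly as you say.
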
 
\begin{proof}
By the $\tend$-closed condition applied to $\a_0'$, for any $\alpha\in A^*$ there exists $\alpha'\in A^*$ such that 
$$
T(\a)T(\a_0')\inv T(\a_0)T(\a_0')\inv
=
T(\a')T(\a_0')\inv
$$
i.e.,
\begin{align*}
T(\a)T(\a_0')\inv 
&=
T(\a')T(\a_0')\inv T(\a_0')T(\a_0)\inv
\\
&=
T(\a') T(\a_0)\inv
\end{align*}
so 
$\cE_{\alpha_0'}(T)\subseteq \cE_{ {\alpha_0}}(T)$.  As both spaces are of the same dimension, equality must hold. 
\end{proof}

Note the inclusions
$$
Diag_A^g\subseteq End-Abel_A\subseteq Abel_A.
$$
  These spaces all coincide when $m\leq 4$,  $Diag_A^g=End-Abel_A$ for $m=5$, $End-Abel_A\subsetneq Abel_A$ when
$m\geq 5$ (see \S\ref{abelalgvalg}), and are all different when $m\geq 7$   (see \S\ref{diagvabelalg}).

\begin{proposition}
The subvariety $End-Abel_A$ in   the set of $1_A$-generic tensors has equations
that as
a $GL(A)\times GL(B)\times GL(C)$-module include
$$
S_{m,3,1^{m-2}}A^*\ot
(\bigoplus_{\stack{|\pi|=m+1}{p_1,\ell(\pi)\leq m}}S_{\pi+(1^m)}B^*\ot
S_{\pi'+(1^m)}C^*).
$$
These equations are of degree $2m+1$.
\end{proposition}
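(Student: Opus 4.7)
I will construct an explicit polynomial of degree $2m+1$ cutting out the End-closed condition on the $1_A$-generic locus, and then extract its $GL(A)\times GL(B)\times GL(C)$-module structure. By Proposition~\ref{closedcor}, on the open set where $T(\alpha)$ is invertible the End-closed condition is equivalent to $T(\beta)T(\alpha)^{-1}T(\gamma)\in T(A^*)$ for all $\beta,\gamma\in A^*$. Substituting $T(\alpha)^{-1}=(\det T(\alpha))^{-1}\operatorname{adj}(T(\alpha))$ and rewriting ``$\in T(A^*)$'' as the wedge with a basis of $T(A^*)$ vanishing, one obtains the universal identity
\[
\Phi_T(\alpha,\beta,\gamma,\omega):=T(\beta)\cdot\operatorname{adj}(T(\alpha))\cdot T(\gamma)\wedge T^{\wedge m}(\omega)=0\in\Lambda^{m+1}(B\otimes C),
\]
for all $\alpha,\beta,\gamma\in A^*$ and $\omega\in\Lambda^m A^*$, where $T^{\wedge m}:\Lambda^m A^*\to\Lambda^m(B\otimes C)$ is the induced map. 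The degree count $1+(m-1)+1+m=2m+1$ matches, and by continuity $\Phi_T\equiv 0$ on $\ol{{End\text{-}Abel}_A}$.

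To read off the $A^*$-factor, note that the parameters $(\alpha,\beta,\gamma,\omega)$ naturally live in $A^*\otimes A^*\otimes S^{m-1}A^*\otimes \Lambda^m A^*$; on the Abel locus $\Phi$ is symmetric in $(\beta,\gamma)$ by Strassen's equations, so the two linear $A^*$-factors may be replaced by $S^2 A^*$. A Pieri computation then yields $S^{m-1}A^*\otimes \Lambda^m A^*=S_{(m,1^{m-1})}A^*$ and
\[
S^2 A^*\otimes S_{(m,1^{m-1})}A^*=S_{(m+2,1^{m-1})}A^*\oplus S_{(m+1,2,1^{m-2})}A^*\oplus S_{(m,3,1^{m-2})}A^*,
\]
so the summand $S_{(m,3,1^{m-2})}A^*$ appears with multiplicity one. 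For the $B^*\otimes C^*$-factor, Cauchy's formula $\Lambda^{m+1}(B\otimes C)\cong\bigoplus_{|\pi|=m+1,\,\pi_1,\ell(\pi)\leq m}S_\pi B\otimes S_{\pi'}C$ produces one component for each admissible $\pi$. The $(1^m)$-shift arises from two natural $\det B\otimes\det C$-twists in $\Phi$: one from $\operatorname{adj}(T(\alpha))$, which via the isomorphism $\Lambda^{m-1}V\cong V^*\otimes\det V$ is intrinsically a section of $B^*\otimes C^*\otimes\det B\otimes\det C$, and one from $T^{\wedge m}(\omega)$, which contributes a further $\det B\otimes\det C$. Since $\det\otimes S_\pi=S_{\pi+(1^m)}$, the two twists combine to shift each output partition by $(1^m)$, producing precisely the factors $S_{\pi+(1^m)}B^*\otimes S_{\pi'+(1^m)}C^*$ of the statement.

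Nonvanishing of each claimed isotypic component then follows because ${End\text{-}Abel}_A$ is a proper subvariety of ${Abel}_A$ (see~\S\ref{abelalgvalg}); a $GL$-equivariant projection onto the claimed summand, or alternatively evaluation of $\Phi$ on a single explicit non-End-closed tensor from \S\ref{abelalgvalg}, confirms that the component is not identically zero. \emph{Main obstacle.} The primary technical hurdle is the $GL(B)\times GL(C)$-equivariant bookkeeping of the two $\det$-twists responsible for the $(1^m)$-shift on the $B^*\otimes C^*$-side; the $A^*$-decomposition is a routine Pieri calculation, and nonvanishing is standard once the module structure has been identified.
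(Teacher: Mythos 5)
Your construction is the same as the paper's: you form exactly the equations $T(\b)\,\operatorname{adj}(T(\a))\,T(\g)\ww T(\a_1)\ww\cdots\ww T(\a_m)=0$ in $\La{m+1}(B\ot C)$ and count degree $1+(m-1)+1+m=2m+1$, which is precisely how the proposition is proved. The difference is in how the module is identified, and there your bookkeeping has a genuine error. There is exactly \emph{one} $\det B\ot \det C$ twist in $\Phi$, and it comes from the adjugate alone: under $g\in GL(B)$ one has $\operatorname{adj}(gT(\a))=\det(g)\operatorname{adj}(T(\a))g^{-1}$, so $\Phi_{g\cdot T}=\det_B(g)\,\La{m+1}(g\ot\operatorname{id})\,\Phi_T$. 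The factor $T^{\ww m}(\omega)$ contributes no $B$- or $C$-twist at all (its only determinantal twist is on the $A$-side, through $\omega\in\La m A^*$). Your claim that two $\det B\ot\det C$ twists ``combine to shift by $(1^m)$'' is also arithmetically inconsistent: two twists would shift $\pi$ by $(2^m)$, giving $B^*$-weights of size $3m+1$, contradicting the degree $2m+1$ you correctly computed ($|\pi+(1^m)|=(m+1)+m=2m+1$ is the consistency check that forces a single twist). Since you flagged this twist bookkeeping as the main technical hurdle, this is not a cosmetic slip but a wrong step in the argument, even though the final module you write down happens to agree with the statement.

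Two further points. First, your nonvanishing argument is insufficient: evaluating $\Phi$ on one non-$\tend$-closed tensor only shows $\Phi\not\equiv 0$, not that each isotypic component $S_{m,3,1^{m-2}}A^*\ot S_{\pi+(1^m)}B^*\ot S_{\pi'+(1^m)}C^*$ survives in the image of the (merely surjective-onto-its-image) equivariant map from the parameter module. The paper avoids this by exhibiting explicit highest weight vectors: set $\a=\a'=\a_2$ and take the coefficient of $b_1\ot c_1\ww\cdots\ww b_1\ot c_{q_{p_1}}\ww\cdots\ww b_{p_1}\ot c_1\ww\cdots\ww b_{p_1}\ot c_{q_{p_1}}$; one then reads off the $A^*$-weight $(m,3,1^{m-2})$ directly (the $\a_1$-degree is $(m-1)+1=m$, the $\a_2$-degree is $2+1=3$) and the $B^*,C^*$-weights $\pi+(1^m),\pi'+(1^m)$, with no Pieri or twist analysis needed. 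Second, your symmetrization of the $(\b,\g)$ slots is justified only on the Abel locus, which does not by itself let you replace $A^*\ot A^*$ by $S^2A^*$ at the level of polynomial equations; the clean fix is either to symmetrize $\Phi$ in $(\b,\g)$ (the symmetrized polynomials still vanish on $End\text{-}Abel_A$) or to note that $S_{(m,3,1^{m-2})}A^*$ does not occur in $\La 2 A^*\ot S_{(m,1^{m-1})}A^*$, so this step is ultimately harmless; your Pieri decomposition of $S^2A^*\ot S_{(m,1^{m-1})}A^*$ is correct.
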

\begin{proof}
For
$\a_1\hd \a_m$ a basis of $A^*$,  if $T\in End-Abel_A$,  then
for all $\a,\a'\in A^*$,
$$T(\a ) (T(\a_1)^{\ww m-1})^TT(\a')\subset \tspan\{ T(\a_1)\hd T(\a_m)\}.
$$
In other words,   the following vector in $\La{m+1}(B\ot C)$  must be zero:
$$
T(\a ) (T(\a_1)^{\ww m-1})^T T(\a')
\ww T(\a_1)\ww\cdots \ww T(\a_m).
$$
The entries of this vector  are polynomials of degree $2m+1$ in the coefficients
of $T$, as the entries of
$(T(\a_1)^{\ww m-1})^T$ are of degree $m-1$ in the coefficients of $T$ and all
the other
matrices have entries that are linear in the coefficients of $T$.
Among the quantities that must be zero are the coefficients of
$b_1\ot c_1 \ww \cdots \ww b_1\ot c_m\ww b_2\ot c_1$, and more generally
the coefficients of
$b_{ 1}\ot c_{ 1}\ww\cdots \ww b_{ 1}\ot c_{q_{p_1}}\ww \cdots \ww b_{p_1}\ot
c_1\ww\cdots \ww b_{p_1}\ot c_{q_{p_1}}$ where
$\pi=(p_1\hd p_{q_1})$ is a partition of $m+1$ with first part at most $m$,
$q_1\leq m$,
and $\pi'=(q_1\hd q_{p_1})$. Now take $\a,\a'=\a_2$, the corresponding
coefficients
have the stated weight and all are
  highest weight vectors.
\end{proof}

 \section{The Alexeev-Forbes-Tsimerman method for bounding tensor
rank}\label{aftmeth}

Because the set of tensors of rank at most $r$ is not closed, there are few
techniques
for proving lower bounds on rank that are not just lower bounds for border rank.
What follows is the only general technique we are aware of.
(However for very special
 tensors like matrix multiplication, additional methods are available,
 see \cite{MR3162411}.)
The method   below, generally called the {\it
substitution method} was introduced in \cite{MR0207178} and used in \cite{MR0375839,MR0395328}
among other places.
We follow  the novel application of it from  \cite{MR3025382}.
 Fix a basis $a_1\hd a_{\aaa}$ of $A$.
Write  $T=\sum_{i=1}^\aaa a_i\otimes M_i$, where $M_i\in B\otimes C$.
\begin{proposition}\cite[Appendix
B]{MR3025382},  \cite[Chapter 6]{blaser2014explicit}\label{prop:slice}
Let $\bold R(T)=r$ and $M_1\neq 0$. Then there exist constants $\lambda_2,\dots,
\lambda_\aaa$, such that  the tensor
$$\tilde T:=\sum_{j=2}^{\aaa} a_j\otimes(M_j-\lambda_j M_1)\in  a_1 \upperp \ot B\ot
C,$$
has rank at most $r-1$. Moreover, if $\trank (M_1)=1$ then for any
choice  of  $(\lambda_2\hd \l_\aaa)$ we have $\bold R(\tilde T)\geq r-1$.
\end{proposition}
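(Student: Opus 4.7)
\medskip

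\noindent\textbf{Proof proposal.} For the first assertion, the plan is to take a rank-$r$ decomposition of $T$ and read off the $\lambda_j$ from one of its rank one summands. Write $T=\sum_{k=1}^r u_k\ot v_k\ot w_k$ with $u_k\in A$, $v_k\in B$, $w_k\in C$, and write each $u_k=\sum_i u_k^i a_i$ in the chosen basis of $A$. Then $M_i=\sum_k u_k^i\, v_k\ot w_k$. Since $M_1\neq 0$, at least one coefficient $u_k^1$ is nonzero; after reindexing and rescaling I may assume $u_1^1=1$, so $u_1=a_1+\sum_{j\ge 2}\lambda_j a_j$ for some scalars $\lambda_j$.

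With this choice of $\lambda_j$, observe that
$$
\tilde T \;=\; \sum_{j\ge 2} a_j\ot(M_j-\lambda_j M_1)
\;=\; T - \Bigl(a_1+\sum_{j\ge 2}\lambda_j a_j\Bigr)\ot M_1
\;=\; T - u_1\ot M_1 .
$$
The key step is to substitute the decomposition of $T$ and of $M_1=\sum_{k:\,u_k^1\ne 0} v_k\ot w_k$ into this identity: the rank one summands of $T$ indexed by those $k$ with $u_k^1=0$ survive unchanged, while each summand $u_k\ot v_k\ot w_k$ with $u_k^1\ne 0$ (after rescaling $u_k$ so that $u_k^1=1$) combines with $-u_1\ot v_k\ot w_k$ into $(u_k-u_1)\ot v_k\ot w_k$. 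The $k=1$ term cancels completely, leaving a sum of $r-1$ rank one tensors. Moreover $u_k-u_1$ and the $u_k$ with $u_k^1=0$ all lie in $a_1\upperp$, so $\tilde T\in a_1\upperp\ot B\ot C$ as claimed. So $\bold R(\tilde T)\le r-1$.

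For the second assertion, the argument is immediate once one notes that the same identity
$$
T \;=\; \tilde T + \Bigl(a_1+\sum_{j\ge 2}\lambda_j a_j\Bigr)\ot M_1
$$
holds for \emph{any} choice of $\lambda_2,\dots,\lambda_\aaa$. If $\trank(M_1)=1$, then $M_1=v\ot w$ for some $v\in B$, $w\in C$, and the second summand is a single rank one tensor in $A\ot B\ot C$. Hence $\bold R(T)\le \bold R(\tilde T)+1$, which rearranges to $\bold R(\tilde T)\ge r-1$.

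The only genuine content is the first paragraph, where the potential obstacle is purely notational: one must be careful that ``reindexing and rescaling'' the $u_k$ with $u_k^1\ne 0$ does not disturb the other rank one summands, and that the resulting $\lambda_j$ really are independent of $j$-dependent ambiguities in the chosen decomposition. Both are straightforward because only the single summand $u_1\ot v_1\ot w_1$ is used to define the $\lambda_j$, and the remaining summands are rewritten only via the bookkeeping $u_k\ot v_k\ot w_k - u_1\ot v_k\ot w_k=(u_k-u_1)\ot v_k\ot w_k$.
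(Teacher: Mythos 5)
Your proof is correct and takes essentially the same approach as the paper: the paper also extracts the $\lambda_j$ from the rank-one summand of a rank-$r$ decomposition whose $a_1$-coefficient is nonzero (phrased there as $M_j=\sum_u d_j^u X_u$ with $\lambda_j=d_j^1/d_1^1$, so that $\tilde T(a_1\upperp)$ lies in the span of $X_2,\dots,X_r$), which is exactly your identity $\tilde T=T-u_1\ot M_1$ in matrix-space language. The second assertion is likewise handled the same way, by adding the rank-one element $M_1$ back to a minimal decomposition of $\tilde T$ to bound $\bold R(T)\leq \bold R(\tilde T)+1$.
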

The statement of Proposition \ref{prop:slice} is   slightly different from the
original statement in
  \cite{MR3025382}, so we give
a modified   proof:
\begin{proof}
By Proposition \ref{prop:tensorranktomatrix}   there exist rank one
elements $X_1,\dots, X_r\in B\ot C$ and scalars $d^u_j$  such that:
$$
M_j=\sum_{u=1}^r d_j^u X_u.
$$
Since  $M_1\neq 0$ we may assume $d_1^1\neq  0$ and define
$\lambda_j=\frac{d_j^1}{d_1^1}$. Then the subspace
$\tilde T(  a_1\upperp  )$ is spanned by $X_2,\dots, X_r$ so
Proposition \ref{prop:tensorranktomatrix} implies $\bold R(\tilde T)\leq r-1$.
The last assertion holds because  if $\trank( M_1)=1$ then we may assume
$X_1=M_1$.
\end{proof}
  Proposition \ref{prop:slice} is usually implemented by consecutively applying
the following steps, which we will refer to as the substitution method:
\begin{enumerate}
\item Distinguish $A$, take a basis $\{ a_j\}$ of it  and
take   bases$\{\b_s\}$, $\{\g_t\}$  of $B^*,C^*$ and represent $T$ as a matrix
$M$ with entries that are linear combinations of the  basis vectors $a_i$:
$M_{s,t}=T(\beta_s\otimes\gamma_t)$.
\item Choose a subset of  $\bbb'$   columns
of $M$  and   $\ccc'$   rows of $M$.
\item Inductively, for elements of the chosen
columns (resp.~rows) remove the nonzero  $u$-th column (resp.~row) and add to all other
columns
(resp.~rows) the $u$-th column (resp.~row) times an arbitrary coefficient
$\lambda$,
regarding the $a_j$ as formal variables. This step is just  to ensure that each
time
only  nonzero columns or rows are removed.
\item Set all $a_j$ that appeared in any of the selected rows or columns to
zero, obtaining a matrix $M'$. Notice, that $M'$ does not depend on the choice
of $\lambda$.
\item The rank of $T$ is at least $\bbb'$ plus $\ccc'$ plus the rank of the
tensor
corresponding to  $M'$.
\end{enumerate}
The above steps can be iterated, interchanging the roles of $A,B$
and $C$.

\begin{example}\label{ex:aft} \cite{MR3025382}
Let
$$
T(A^*)=
\begin{pmatrix}
x_1 & & & & & & &\\
 &x_1 & & & & & &\\
  & & x_1& & & & &\\
   & & & x_1& & & &\\
    x_2& & & & x_1& & &\\
     & x_2& & & & x_1& &\\
      x_3&  & x_2& & & & x_1&\\
       x_4&x_3 & &x_2 & & & &x_1
\end{pmatrix},
$$
where here,  and in what follows, blank entries are zero.
Then $\bold R(T)\geq 15$.
Indeed, in the first iteration of the method presented above,   choose the first
four rows and last  four  columns. One  obtains a $4\times 4$ matrix $M'$ and the
associated tensor $T'$,
so    $\bold R(T)\geq 8+\bold R(T')$. Iterating the method twice yields
$\bold R(T)\geq 8+4+2+1=15$.

On the other hand $\ur(T)=8$, e.g., because $T(A^*)$, after
a choice of $\a^1$, is a specialization of a space that is abelian and contains
a regular nilpotent element and  we conclude by  Corollary \ref{containsreg} below.

To see that  $\bold R(T)=15$, one can construct an explicit expression or appeal
to Proposition \ref{centralizerprop} because
$T(A^*)$ is a degeneration   of the
centralizer of a regular nilpotent element.

This generalizes to $T\in \BC^k\ot \BC^{2^k}\ot \BC^{2^k}$ of rank
$2*2^k-1$ and border rank $2^k$.
\end{example}
\begin{example}
\cite{MR3025382}\label{biggestrsofar}
Let
$T=a_1\ot (b_1\ot c_1+\cdots + b_8\ot c_8)+a_2
\ot (b_1\ot c_5+b_2\ot c_6+ b_3\ot c_7+b_4\ot c_8)+
a_3\ot(b_1\ot c_7+b_2\ot c_8)+a_4\ot b_1\ot c_8+
a_5\ot b_8\ot c_1+a_6\ot b_8\ot c_2+a_7\ot b_8\ot c_3+
a_8\ot b_8\ot c_4$, so
$$
T(A^*)=
\begin{pmatrix}
x_1 & & & & & & &x_5\\
 &x_1 & & & & & &x_6\\
  & & x_1& & & & &x_7\\
   & & & x_1& & & &x_8\\
    x_2& & & & x_1& & &\\
     & x_2& & & & x_1& &\\
      x_3&  & x_2& & & & x_1&\\
       x_4&x_3 & &x_2 & & & &x_1
\end{pmatrix}.
$$
Then $\bold R(T)\geq 18$.
Here we start by  contracting
$x_5,x_6,x_7,x_8$. We obtain a tensor $\tilde T$ represented by the matrix
$$
\begin{pmatrix}
x_1 & & & & & & \\
 &x_1 & & & & & \\
  & & x_1& & & & \\
   & & & x_1& & & \\
    x_2& & & & x_1& &\\
     & x_2& & & & x_1&\\
      x_3&  & x_2& & & & x_1\\
       x_4&x_3 & &x_2 & & &
\end{pmatrix},
$$
and $\bold R(T)\geq 4+\bold R(\tilde T)$.
The substitution method  then gives  $\bold R(\tilde T)\geq 14$.
In fact, $\bold R(T)=18$; it is enough to consider $17$ matrices with just one
nonzero entry corresponding to all nonzero entries of $T(A^*)$, apart from the
top left and bottom right corner and one  matrix with $1$ at each corner and all
other entries equal to $0$.
This generalizes to $T\in \BC^{2^k}\ot \BC^{2^k}\ot \BC^{2^k}$ of rank
$3*2^k-k-3$.
\end{example}
For tensors in $\BC^m\ot \BC^m\ot \BC^m$, the limit of the method   would be to
prove a tensor has rank at least $3(m-1)$, and this
can be achieved only by exchanging the roles of $A,B,C$ in the application
successively.

\section{A Remark  on Strassen's additivity conjecture}\label{strremsect}

Strassen's additivity conjecture \cite{MR0521168} states that the
rank of the sum of two
tensors   in disjoint spaces equals the sum of the ranks.
While this conjecture has been studied from several different perspectives,
e.g.~ \cite{MR766508,MR861366,MR1617757,MR3320211,MR3092255},
very little is known about it, and experts are divided as to whether it should
be true or false.

  In many cases of low rank the substitution method provides the correct rank.
  In light of this, the following theorem indicates  why providing a
counter-example to Strassen's conjecture may be difficult.
\begin{theorem} \label{addthm} Let $T_1\in A_1\ot B_1\ot C_1$ and
$T_2\in A_2\ot B_2\ot C_2$ be such that
that $\bold R(T_1)$  can be determined  by the substitution method.
Then Strassen's additivity conjecture holds for
$T_1\op T_2$, i.e, $\bold R(T_1\op T_2)= \bold R(T_1)+ \bold R(T_2)$.
\end{theorem}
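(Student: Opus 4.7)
The bound $\bold R(T_1\op T_2)\le \bold R(T_1)+\bold R(T_2)$ is immediate by concatenating rank decompositions.  For the lower bound I proceed by induction on the number $N$ of substitution steps certifying $\bold R(T_1)=N$, applying the same sequence of substitutions to $T_1\op T_2$ and arguing that each step reduces the rank by exactly one while preserving the $T_2$-contribution through a suitable projection.

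The base case $N=0$ is trivial.  For the inductive step, suppose the first substitution in the certificate for $T_1$ is on a basis vector $v\in A_1$.  Since $T_2$ vanishes on $A_1$-directions, the slice $(T_1+T_2)(v)=T_1(v)\in B_1\ot C_1$, and Proposition \ref{prop:slice} produces, for any adversary coefficients $(\lambda,\mu)$ on the remaining basis vectors, a substituted tensor of the form $\tilde T_1+T_2-L_2\ot M$, where $\tilde T_1$ is the usual substitution on the $T_1$-side, $L_2=\sum_k\mu_k a'_k\in A_2$, and $M=T_1(v)$.  The perturbation $-L_2\ot M\in A_2\ot B_1\ot C_1$ vanishes under the projection $\pi_{B_2C_2}\colon B\ot C\to B_2\ot C_2$.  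I would strengthen the inductive hypothesis to: for any tensor $T_1^*$ of rank $N-1$ certified by the substitution method, and any $S\in A_2\ot B\ot C$ with $\pi_{B_2C_2}(S)$ of rank $k_2=\bold R(T_2)$, one has $\bold R(T_1^*+S)\ge(N-1)+k_2$.  The base case is the projection lower bound $\bold R(S)\ge\bold R(\pi_{B_2C_2}(S))$, and the $A_1$-substitution step preserves the invariant because $M\in B_1\ot C_1$ projects to zero.

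The main obstacle is that the certificate for $T_1$ may require substitutions on basis vectors of $B_1$ or $C_1$ rather than $A_1$.  In those cases the slice $(T_1^*+S)(v)$ can acquire a nonzero contribution from $S$'s mixed blocks, breaking the clean rank-one-slice structure, and the extra cross-term produced by the substitution lands in a block such as $A_1\ot B_2\ot C_1$ that lies outside both ambients.  Worse, after substitutions involving all three factors, a rank-one correction can be added to the $A_2\ot B_2\ot C_2$ block itself, so $\pi_{B_2C_2}$ is no longer preserved in general.  Overcoming this would require symmetrizing the strengthened invariant across all three factors, tracking the tensor's full decomposition into the eight blocks $A_i\ot B_j\ot C_k$ and verifying that each additional modification to the $(2,2,2)$-block is paid for by a rank-one drop elsewhere; this bookkeeping is the technical heart of the argument.
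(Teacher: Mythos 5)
Your reduction to the lower bound and your observation that cross-terms are the crux are both on target, but the obstacle you stop at is already resolved by the precise form of the substitution method that the paper uses, and this is what makes the one-line proof work. In the formulation of \S\ref{aftmeth}, a single \emph{application} of the method fixes a distinguished factor, say $A$, represents the tensor as a matrix $M$ with entries linear in $A$, selects a set of rows and columns (i.e.\ substitutes in $B$ and $C$), performs the $\lambda$-eliminations, and then, crucially, in step~(4) \emph{sets to zero} every $a_j$ that occurred in a selected row or column, producing a matrix $M'$ that is $\lambda$-independent. For $T=T_1\oplus T_2$ the matrix $M$ is block diagonal, $M=M^{(1)}\oplus M^{(2)}$, and the certificate for $T_1$ selects rows and columns inside $M^{(1)}$. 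Every cross-block entry created by a row or column elimination is a $\lambda$-multiple of an entry of a removed row or column, hence a linear form in exactly the $A_1$-variables that step~(4) sends to zero (this remains true when eliminations compose: the added terms always trace back, inductively, to entries of removed rows and columns). So after the projection all cross-block garbage vanishes, $M'=(M^{(1)})'\oplus M^{(2)}$, and the block carrying $T_2$ is literally unchanged. Iterating, the invariant ``a reduced $T_1$ summed disjointly with the original $T_2$'' is preserved after each full application, which is exactly what the paper's proof asserts.

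The subtlety you flagged is genuine if one instead applies Proposition~\ref{prop:slice} one slice at a time across all three factors without an intermediate projection: the $\lambda$-dependent corrections do accumulate and can reach $A_2\ot B_2\ot C_2$ after three such steps, as you compute. But the method as stated in the paper batches the $B$ and $C$ substitutions within a single distinguished factor and then projects on $A$ before changing the distinguished factor. The projection can only lower rank, so the inequality $\bold R(T_1\oplus T_2)\ge \mathbf b'+\mathbf c'+\bold R(M')$ still holds, and $M'$ carries no $\lambda$'s and no cross terms. Your proposed strengthened inductive hypothesis (tracking $\pi_{B_2C_2}$) and the eight-block bookkeeping are therefore unnecessary; the key idea you were missing is that cross terms are, by construction, supported on the variables being annihilated.
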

\begin{proof}
With each application of the substitution method, $T_1$ is modified to
a tensor of lower rank living in a smaller space and
  $T_2$ is unchanged. After all applications,  $T_1$ has been modified
  to zero and $T_2$ is still unchanged.
\end{proof}
  The rank of any tensor in $\BC^2\ot B\ot C$ can be computed using the substitution method 
  as follows:
  by dimension count, we can always find either $\beta \in B^*$ or $\gamma\in C^*$, 
  such that $T(\b)$ or $T(\g)$ is a rank one matrix. In particular, 
  Theorem \ref{addthm} provides an easy proof of Strassen's additivity conjecture if the dimension of any of 
$A_1,B_1$ or $C_1$
equals $2$. This was first shown in  \cite{MR861366} by other methods  and is further investigated by Buczy{\'n}ski and Postinghel, see   \cite{PostBucz}.

\section{Abelian tensors in $\BC^m\ot \BC^m\ot \BC^m$ with border rank
greater than $m$}\label{abelovermsect}

\subsection{$End-Abel_A\subsetneq Abel_A$ for $m\geq 5$}\label{abelalgvalg}

The lower bounds for border rank in the following two propositions appeared in \cite[Def.
16]{MR3503693} in the language of groups.
It answers \cite[Question A]{MR2202260} and
  provides an example asked for in
\cite[Remark after Question B]{MR2202260} and a concise $1$-generic tensor
$T\in \BC^5\ot\BC^5\ot \BC^5$ with $\ur(T)=6$ satisfying Strassen's equations.
\begin{proposition}\label{prop:Leitner}
Let $T_{Leit,5}=a_1\ot (b_1\ot c_1+ b_2\ot c_2+ b_3\ot c_3+ b_4\ot c_4+ b_5\ot
c_5)+a_2\ot(b_1\ot c_3+b_3\ot c_5)
+a_3\ot b_1\ot c_4 + a_4\ot b_2\ot c_4+ a_5\ot b_2\ot c_5$,
which gives rise to the linear space
$$
T_{Leit,5}(A^*)=\begin{pmatrix}
x_1 & & & & \\
0 &x_1 & & & \\
   x_2&0 & x_1& & \\
   x_3& x_4& 0& x_1& \\
 0   &x_5 &x_2 &0 & x_1\\
\end{pmatrix}.
$$
Then $\cE_{\a^1}(T_{Leit,5})$
is an abelian Lie algebra, but not $\tend$-closed. I.e.,
$T_{Leit,5}\in Abel_A$ but $T_{Leit,5}\not\in End-Abel_A$. In particular,
$T_{Leit,5}\not\in Diag_A$ so $\ur(T_{Leit,5})>5$. In fact, $\ur(T_{Leit,5})=6$
and $\bold
R(T_{Leit,5})=9$.
\end{proposition}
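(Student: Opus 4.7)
The plan for the first two assertions is a direct matrix calculation. Since $T_{Leit,5}(\alpha^1) = I_5$, the space $\cE := \cE_{\alpha^1}(T_{Leit,5})$ coincides with $T_{Leit,5}(A^*)$, and setting $X_j := T_{Leit,5}(\alpha^j)$ for $j = 2,\ldots,5$, the displayed matrix identifies $X_2 = E_{3,1}+E_{5,3}$, $X_3 = E_{4,1}$, $X_4 = E_{4,2}$, $X_5 = E_{5,2}$. Using $E_{i,j}E_{k,l} = \delta_{jk}E_{i,l}$, every product $X_iX_j$ with $i \neq j$ vanishes, proving $\cE$ is abelian. On the other hand $X_2^2 = E_{5,3}E_{3,1} = E_{5,1}$, and no matrix in $\cE$ has a nonzero $(5,1)$-entry, hence $E_{5,1} \notin \cE$ and $\cE$ is not closed under composition.

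Proposition \ref{closedcor} makes the $\tend$-closed condition necessary for $\ur(T) = m = 5$, so $\ur(T_{Leit,5}) \geq 6$. For the matching upper bound I would use Proposition \ref{prop:tensorranktomatrix} and exhibit a curve $E_t \in G(6, B \otimes C)$ of six-dimensional subspaces, each spanned by six rank-one matrices, whose flat limit $E_0$ contains $T_{Leit,5}(A^*)$. The natural $E_0$ is the six-dimensional commutative end-closure $\cE + \BC E_{5,1}$; the curve $E_t$ perturbs the five rank-ones $b_i \otimes c_i$ accounting for $I$ and introduces one sixth rank-one whose $t \to 0$ singular behaviour both supplies the four off-diagonal slices $X_2, \ldots, X_5$ and creates $E_{5,1} = X_2^2$ as the extra direction in the limit. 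The hard part is arranging that a single extra rank-one services all four off-diagonal slices at once, which forces a specific balance among the blow-up orders in $t$ and relies on the precise shape of $T_{Leit,5}$.

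For the rank bounds I would iterate the substitution method of Proposition \ref{prop:slice}. Selecting rows $1,2$ and columns $4,5$ of $T_{Leit,5}(A^*)$ kills $x_1$ (the only variable in those four lines) and leaves the residual $3\times 3$ matrix
\[
M' = \begin{pmatrix} x_2 & 0 & 0 \\ x_3 & x_4 & 0 \\ 0 & x_5 & x_2 \end{pmatrix},
\]
giving $\bold R(T_{Leit,5}) \geq 4 + \bold R(T')$. Iterating on $T'$, selecting its first row and third column kills $x_2$ and leaves $M'' = \begin{pmatrix} x_3 & x_4 \\ 0 & x_5 \end{pmatrix}$, hence $\bold R(T') \geq 2 + \bold R(T'')$. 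The residual $T'' \in \BC^3 \otimes \BC^2 \otimes \BC^2$ has three linearly independent $A$-slices $E_{1,1}, E_{1,2}, E_{2,2}$, so its $A$-flattening has rank $3$ and $\bold R(T'') = 3$. Telescoping yields $\bold R(T_{Leit,5}) \geq 4 + 2 + 3 = 9$. The matching upper bound $\bold R(T_{Leit,5}) \leq 9$ is obtained by writing down an explicit sum of nine rank-one tensors; the naive enumeration produces ten summands, and a single non-trivial combination, exploiting the fact that $b_4$ and $b_5$ each appear in three of the natural rank-one summands, saves one.
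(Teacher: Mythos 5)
Your calculations for the first assertions, the border rank lower bound, and the rank lower bound are correct and follow the same route as the paper. Identifying $X_2=E_{3,1}+E_{5,3}$, $X_3=E_{4,1}$, $X_4=E_{4,2}$, $X_5=E_{5,2}$, verifying that all cross products $X_iX_j$ vanish, and observing $X_2^2=E_{5,1}\notin\cE$ is exactly the inspection the paper alludes to. Likewise the deduction $\ur(T_{Leit,5})\geq 6$ from Proposition~\ref{closedcor} is equivalent to the paper's appeal to Proposition~\ref{thm:brankdiag}, and your two-step substitution (rows $1,2$ and columns $4,5$, then row $1$ and column $3$ of the residual, arriving at $4+2+3=9$) is a valid implementation of the method.

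The genuine gap is that the two upper bounds are only described, not proved. For $\ur(T_{Leit,5})\leq 6$ you state the shape of a curve $E_t\in G(6,B\ot C)$ of rank-one spans whose flat limit contains $T_{Leit,5}(A^*)$ together with $E_{5,1}$, and you flag the coordination of the blow-up orders in $t$ as ``the hard part''; but that balancing act is precisely the content of the claim, and without the explicit six rank-one families $X_1(\epsilon),\dots,X_6(\epsilon)$ (which the paper writes down, with carefully chosen powers of $\epsilon$) nothing has been established. Similarly, $\bold R(T_{Leit,5})\leq 9$ requires an explicit nine-term rank-one decomposition; your sketch of how one summand can be saved from the naive count of ten is a reasonable heuristic (although the factors that repeat three times are $b_1,b_2$ or $c_4,c_5$, not $b_4,b_5$), but the decomposition itself is absent. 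Since the lower bounds here are routine and the upper bounds are where the real work lies, the proof is incomplete until those two explicit constructions are supplied.
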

\begin{proof}
The first statements are verifiable by inspection. The fact that the border rank
of the tensor is at least $6$ follows from Theorem \ref{thm:brankdiag}. The fact
that border rank equals $6$ follows by considering rank one matrices:
$$
X_1=\begin{pmatrix}
 & & & & \\
 & & & & \\
  & & & & \\
   1& 1& & & \\
    1&1 & & & \\
\end{pmatrix},
X_2=\begin{pmatrix}
 & & & & \\
 & & & & \\
  \epsilon& & \epsilon^2& & \\
   & & & & \\
    1& &\epsilon & & \\
\end{pmatrix},
X_3=\begin{pmatrix}
\epsilon^2 & & & &\epsilon^4 \\
 & & & & \\
  & & & & \\
   & & & & \\
    1& & & & \epsilon^2\\
\end{pmatrix},$$$$
X_4=\begin{pmatrix}
 & & & & \\
 & & & & \\
  & \epsilon&\epsilon^2 & & \\
   & 1&\epsilon & & \\
    & & & & \\
\end{pmatrix},
X_5=\begin{pmatrix}
 & & & & \\
 & & & & \\
  & & & & \\
   1& & -\epsilon& \epsilon^2& \\
    & & & & \\
\end{pmatrix},
X_6\begin{pmatrix}
 & & & & \\
 & \epsilon^2& & & \\
  & -\epsilon& & & \\
   & & & & \\
    & 1& & & \\
\end{pmatrix}.$$
Then
$$
T_{Leit,5}=\tlim_{\ep\ra 0}
\frac 1{\ep^2} a_1\ot (-X_1 + X_3+X_4+X_5+X_6)+\frac 1\ep a_2\ot (X_2-X_3) +
a_3\ot X_5 + a_4\ot X_4+a_5\ot X_6,
$$
which is a sum of six rank one tensors.
In terms of tensor products,
\begin{align*}
T_{Leit,5}=\tlim_{\ep\ra 0}
\frac 1{\ep^2}
[&
-a_1\ot (b_1+b_2)\ot (c_4+c_5)
+ \ep a_2 \ot (b_1+\ep b_3)\ot (c_5+\ep c_3)\\
&+(a_1-\ep a_2)\ot (b_1+\ep^2b_5)\ot (c_5+\ep^2c_1)
+(a_1+\ep^2 a_4)\ot (b_2+\ep b_3)\ot (c_4+\ep c_3)\\
&+(a_1+\ep^2a_3)\ot (b_1-\ep b_3+\ep^2 b_4)\ot c_4
+ (a_1+\ep^2a_5)\ot b_2\ot (c_5-\ep c_3+\ep^2 c_2)
].
\end{align*}
 
The substitution method  shows that $\bold R(T_{Leit,5})\geq 9$. To prove equality,
 consider the $9$
rank $1$ matrices:
\begin{enumerate}
\item $3$ matrices with just one nonzero entry corresponding to $x_3,x_4,x_5$,
\item The six matrices 
$$
\begin{pmatrix}
1 & & & & \\
 & & & & \\
  -1& & & & \\
   & & & & \\
    -1& & & & \\
\end{pmatrix},
\begin{pmatrix}
& & & & \\
 & & & & \\
  & & & & \\
   & & & & \\
    & &-1 & &1 \\
\end{pmatrix},
\begin{pmatrix}
& & & & \\
 & & & & \\
  & & & & \\
   & & & 1& \\
    & & & & \\
\end{pmatrix},
\begin{pmatrix}
& & & & \\
 & 1& & & \\
  & & & & \\
   & & & & \\
    & & & & \\
\end{pmatrix},
\begin{pmatrix}
& & & & \\
 & & & & \\
  1& &1 & & \\
   & & & & \\
    1& &1 & & \\
\end{pmatrix},\begin{pmatrix}
& & & & \\
 & & & & \\
  1& &-1 & & \\
   & & & & \\
    -1& & 1& & \\
\end{pmatrix}.$$
\end{enumerate}
$T(A^*)$ is contained in the span of these matrices.
\end{proof}

Note that $T_{Leit,5}$ is neither $1_B$ nor $1_C$-generic.  The example easily
generalizes to higher $m$, e.g.~for $m=7$ we could take:
$$
T_{Leit,7}(A^*)=
\begin{pmatrix}
x_1& & & & & &\\
& x_1& & & & &\\
& &x_1 & & & &\\
x_2& & &x_1 & & &\\
x_3& & & &x_1 & &\\
x_4&x_7 & & & &x_1 &\\
&x_5 &x_6 &x_2 & & &x_1
\end{pmatrix}.
$$ 
 
\begin{proposition}
The following tensor:
$$
T_{Leit,6}'=a_1\ot(b_1\ot c_1+ \cdots + b_6\ot c_6) + a_2\ot (b_1\ot c_2+b_2\ot
c_3)
+a_3\ot b_1\ot c_5+ a_4\ot b_1\ot c_6 + a_5\ot b_4\ot c_5+a_6\ot b_4\ot c_6,
$$
which
gives rise to the abelian subspace:
$$
T_{Leit,6}'(A^*)=
\begin{pmatrix}
x_1 & & & & & \\
x_2 &x_1 &  & & & \\
  &x_2 &x_1 &  & & \\
     & & &x_1 & & \\
 x_3  & & &x_5 &x_1 & \\
 x_4   & & &x_6 & &x_1
\end{pmatrix},
$$
is not $\tend$-closed and
satisfies  $\ur(T_{Leit,6}')=7$ and $\bold R(T_{Leit,6}')=11$.
\end{proposition}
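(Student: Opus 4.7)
The plan is to verify the three assertions—failure of $\tend$-closure, $\ur = 7$, and $\bold R = 11$—in order, in direct analogy with the treatment of $T_{Leit,5}$ in Proposition \ref{prop:Leitner}.

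Taking $\alpha_0 = \alpha^1$ (dual to $x_1$), we have $T_{Leit,6}'(\alpha_0) = \mathrm{Id}_B$, so $\cE_{\alpha_0}(T_{Leit,6}') = T_{Leit,6}'(A^*)$. Writing $N_j$ for the matrix associated to $x_j$, a direct check shows $N_iN_j = 0$ for every $i \neq j$ (in each product the column support of the left factor misses the row support of the right factor), so $\cE_{\alpha_0}$ is abelian and $T_{Leit,6}' \in Abel_A$. On the other hand $N_2^2 = E_{31}$, which is not in the linear span of $\{\mathrm{Id}, N_2, \ldots, N_6\}$ (none of those generators has a nonzero $(3,1)$-entry). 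Hence $\cE_{\alpha_0}$ is not closed under composition, $T_{Leit,6}' \notin End\text{-}Abel_A$, and Proposition \ref{closedcor} combined with conciseness gives $\ur(T_{Leit,6}') \geq 7$.

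For the matching upper bound $\ur(T_{Leit,6}') \leq 7$, I would construct an explicit one-parameter family of seven rank one tensors whose span limits to $T_{Leit,6}'(A^*)$, in direct analogy with the six-term degeneration for $T_{Leit,5}$. The top-left $3 \times 3$ block is the structure tensor of the cyclic algebra $\mathbb{C}[t]/(t^3)$, which has minimal border rank; it should contribute three rank one limits absorbing $\mathrm{Id}_3$ on rows and columns $1, 2, 3$ together with the two entries carrying $x_2$. The remaining four rank one limits will be differential-style degenerations of the shape $\epsilon^{-2}\bigl[(a_1 + \epsilon^2 a_k) \ot (b_i + \epsilon^2 b_j) \ot c_\ell - a_1 \ot b_i \ot c_\ell\bigr]$, one per variable $x_3, x_4, x_5, x_6$, each simultaneously producing its off-diagonal entry and absorbing an identity entry on one of rows $4, 5, 6$.

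For the rank equality, the lower bound $\bold R(T_{Leit,6}') \geq 11$ will follow by iterating the substitution method (Proposition \ref{prop:slice}) as in Examples \ref{ex:aft} and \ref{biggestrsofar}. A sample run: eliminate the four scalar slices corresponding to $x_3, x_4, x_5, x_6$ in turn (each is a rank one slice, so Proposition \ref{prop:slice} yields a drop of exactly $1$), reducing to the tensor with slices $\mathrm{Id}_6$ and $E_{21} + E_{32}$, then apply the substitution method twice more to the $\mathbb{C}[t]/(t^3)$-block together with the three decoupled identity entries on rows $4, 5, 6$ to extract the remaining seven units of rank. For the upper bound $\bold R(T_{Leit,6}') \leq 11$, I would exhibit eleven rank one matrices whose span contains $T_{Leit,6}'(A^*)$, combining the classical five-term rank decomposition of $\mathbb{C}[t]/(t^3)$ with three diagonal matrices on rows $4, 5, 6$ together with three hybrid rank one matrices that each carry one off-diagonal entry plus a diagonal correction, saving one term relative to the naive twelve-term construction.

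The main obstacle will be the border rank upper bound: since $\tend$-closure fails, there is no canonical smooth approximation of $\cE_{\alpha_0}$ by diagonalizable algebras, so the seven-term degeneration has to be engineered by hand. The sharp application of the substitution method for the rank lower bound is the next most delicate step.
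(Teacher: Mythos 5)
Your first two steps are fine and coincide with the paper: the commutator and composition checks are exactly the intended ``by inspection'' verification, and $\ur(T_{Leit,6}')\geq 7$ then follows from the failure of the $\tend$-closed condition (Proposition \ref{closedcor}). The genuine gap is the upper bound $\ur(T_{Leit,6}')\leq 7$, which you only promise to engineer by hand. As written your sketch does not close up: the displayed building block $\epsilon^{-2}\bigl[(a_1+\epsilon^2 a_k)\ot(b_i+\epsilon^2 b_j)\ot c_\ell - a_1\ot b_i\ot c_\ell\bigr]$ is a \emph{difference} of two rank-one tensors, not a rank-one tensor, so the count of seven limits is not justified, and your bookkeeping asks four such terms to each absorb a diagonal entry on rows $4,5,6$, of which there are only three. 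The paper avoids any hand-made degeneration: delete the second row and column of $T_{Leit,6}'(A^*)$; what remains is, after permuting rows and columns, the flag-algebra tensor of the partition $(2,2)$, of border rank $5$ by Proposition \ref{prop:Young}, while the deleted row and column carry a border-rank-$2$ pencil in $x_2$ and a new variable $x_7$ placed in the $(2,2)$ entry. Subadditivity of border rank gives $\ur\leq 5+2=7$ for that tensor, and $T_{Leit,6}'$ is its specialization $x_7\mapsto x_1$. Some argument of this kind (or an actual seven-term curve of rank-one matrices) is needed; without it the statement $\ur=7$ is not proved.

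The rank assertions also need repair. For $\bold R(T_{Leit,6}')\leq 11$ your allocation $5+3+3$ cannot work: the eight matrices supported in the upper-left $3\times 3$ block and on the diagonal entries $(4,4),(5,5),(6,6)$ all vanish at the four positions $(5,1),(6,1),(5,4),(6,4)$, so the projection of the span of your eleven matrices onto those four coordinates is at most $3$-dimensional, whereas it must contain the four slices $E_{51},E_{61},E_{54},E_{64}$ of $T_{Leit,6}'(A^*)$. Moreover the relevant block is only the pencil $\langle \mathrm{Id}_3, E_{21}+E_{32}\rangle$, not the full multiplication tensor of $\BC[t]/(t^3)$, and it is covered by $4$ rank-one matrices (the paper exhibits them), so the correct count is $4$ for the block plus $7$ single-entry matrices, giving $11$. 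For the lower bound, note that after eliminating the $x_3,\dots,x_6$ slices via Proposition \ref{prop:slice} the surviving slices are $\mathrm{Id}+L$ and $E_{21}+E_{32}+L'$ with \emph{unknown} $L,L'$ supported at the eliminated positions, not exactly $\mathrm{Id}_6$ and $E_{21}+E_{32}$ as you claim; the residual bound of $7$ does survive (the perturbed second slice is still nilpotent and nonzero, e.g.\ of Jordan type $(3,2,1)$ or $(3,1,1,1)$), but the argument must be made for all choices of the substitution constants.
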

\begin{proof}
The border rank is at least $7$ as $T_{Leit,6}'(A^*)$ is not
$\tend$-closed. The rank is at least $11$ by the substitution method.

To prove that rank is indeed $11$ we notice that the $3\times 3$ upper square of
$T_{Leit,6}'(A^*)$
$$
\begin{pmatrix}
x_1 & &\\
x_2 &x_1 &\\
  &x_2 &x_1\\
\end{pmatrix}
$$
represents a tensor of rank at most $4$ by considering:
$$
\begin{pmatrix}
& &1\\
& &\\
  & &\\
\end{pmatrix},
\begin{pmatrix}
 & &\\
1 &1 &\\
1&1 &\\
\end{pmatrix},
\begin{pmatrix}
& &\\
1 &-1 &\\
-1&1 &\\
\end{pmatrix},
\begin{pmatrix}
1 & &-1\\
 & &\\
-1  & &1\\
\end{pmatrix}.
$$
Apart from  this square there are $7$ nonzero entries, so the rank is at most
$7+4=11$.

To compute the border rank notice that after removing the second row and column
we obtain a tensor of border rank $5$ by
Proposition \ref{prop:Young} below. On the other hand the entries in the second
column and row clearly form a border rank $2$ tensor. In other words, the tensor corresponding to
$$
\begin{pmatrix}
x_1 & & & & & \\
x_2 &x_7 &  & & & \\
  &x_2 &x_1 &  & & \\
     & & &x_1 & & \\
 x_3  & & &x_5 &x_1 & \\
 x_4   & & &x_6 & &x_1
\end{pmatrix}.
$$
has border rank $7$ and $T_{Leit,6}'$ is a specialization if it.
\end{proof}

\subsection{ $Diag^g_A\subsetneq End-Abel_A$ for $m\geq 7$ }\label{diagvabelalg}

\begin{proposition}
The tensor corresponding to 
$$
T_{end,7}(A^*)=
\begin{pmatrix}
x_1& & & & & &\\
& x_1& & & & &\\
& &x_1 & & & &\\
& & &x_1 & & &\\
&x_2+x_7 &x_3 &x_4 &x_1 & &\\
x_2&x_3 &x_5 &x_6 & &x_1 &\\
x_4&x_5 &x_6 &x_7 & & &x_1
\end{pmatrix}
$$
is $\tend$-closed, but has border rank at least $8$.
\end{proposition}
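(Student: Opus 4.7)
The proof splits into two independent verifications: first, that $\cE_{\a_1}(T_{end,7}) = T_{end,7}(A^*)$ is closed under composition (hence also abelian), and second, that $\ur(T_{end,7}) \geq 8$. The second assertion will not come from a new equation---since $\tend$-closedness already holds, we cannot repeat the style of argument used for $T_{Leit,5}$---but rather from the elementary observation in Corollary \ref{hitsseg}: an $A$-concise tensor of minimal border rank $m$ must have $\BP T(A^*)$ meet the Segre variety $Seg(\BP B \times \BP C)$. I will show that $\BP T_{end,7}(A^*)$ contains no rank-one matrix at all, which, together with $A$-conciseness (immediate from the linear independence of $I, N_2, \ldots, N_7$, where $N_j$ is the matrix obtained by setting $x_j = 1$ and the other $x_i = 0$), forces $\ur(T_{end,7}) > 7$.

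\textbf{$\tend$-closedness.} Let $J = \operatorname{span}(N_2, \ldots, N_7)$, so $T_{end,7}(A^*) = \BC I \oplus J$. A direct inspection of the matrix shows that the image of each $N_j$ lies in $\operatorname{span}(b_5, b_6, b_7)$, while each $N_j$ annihilates $\operatorname{span}(b_5, b_6, b_7)$ (the last three columns of every $N_j$ are zero). Therefore $N_i N_j = 0$ for every $i, j \in \{2, \ldots, 7\}$. Consequently $J^2 = 0$, which implies both that the space is abelian and that $(aI + N)(a'I + N') = aa' I + aN' + a'N \in T_{end,7}(A^*)$ for all $a, a' \in \BC$ and $N, N' \in J$.

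\textbf{No rank-one elements.} An arbitrary element of $T_{end,7}(A^*)$ is $aI + N$ with $N \in J$. If $a \neq 0$, the matrix $aI + N$ has only the eigenvalue $a$ (with multiplicity $7$), hence is invertible, so its rank is $7$. Thus it suffices to show that the only rank-one element of $J$ is $0$. Every $N \in J$ has support in rows $5,6,7$ and columns $1,2,3,4$, so rank$(N)$ equals the rank of the $3 \times 4$ block
$$
M(x) = \begin{pmatrix} 0 & x_2 + x_7 & x_3 & x_4 \\ x_2 & x_3 & x_5 & x_6 \\ x_4 & x_5 & x_6 & x_7 \end{pmatrix}.
$$
I will show directly that $M(x)$ has rank $\leq 1$ only when $x = 0$. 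The key input is the $(1,1)$-zero: if $M(x) = uv^T$ with $u \in \BC^3$, $v \in \BC^4$, then $u_1 v_1 = 0$, and one splits into the two cases $u_1 = 0$ and $v_1 = 0$. The former forces $x_3 = x_4 = x_2 + x_7 = 0$ and then, comparing the lower two rows, forces everything to vanish; the latter forces $x_2 = x_4 = 0$, and then proportionality of the three rows $(0, x_7, x_3, 0)$, $(0, x_3, x_5, x_6)$, $(0, x_5, x_6, x_7)$ again forces all $x_i = 0$. This case analysis is finite and mechanical.

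\textbf{Main obstacle.} The $\tend$-closed verification is trivial once $J^2 = 0$ is observed, so the only real work is the rank-one analysis of $M(x)$. Its difficulty is purely combinatorial: the matrix $M(x)$ is a ``catalecticant-like'' pattern (entries on antidiagonals repeat, with the extra $x_7$ in the $(1,2)$-slot), and naively applying the $2 \times 2$ minor equations yields a quadratic system that looks nontrivial. The correct strategy is to exploit the zero in position $(1,1)$ as above, which linearizes the analysis into two short cases and avoids grinding through all $\binom{3}{2}\binom{4}{2} = 18$ minors. Once this is done, Corollary \ref{hitsseg} and $A$-conciseness immediately give $\ur(T_{end,7}) \geq 8$.
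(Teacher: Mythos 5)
Your proof is correct and follows essentially the same route as the paper: observe that the matrix is block-lower-triangular so $x_1=0$ is forced, then invoke Corollary \ref{hitsseg} and show by a short case analysis starting from the $(1,1)$-zero of the $3\times 4$ block that no nonzero rank-one element exists. The paper phrases the same case split via vanishing $2\times 2$ minors (factoring $(x_2+x_7)x_2=0$) rather than via the factorization $M=uv^T$, and it handles $\tend$-closedness only by the phrase ``follows by inspection,'' so your explicit $J^2=0$ observation is a welcome tidying-up but not a new idea.
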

\begin{proof}
The fact that it is $\tend$-closed follows by inspection. The tensor has border rank at least $8$ by Corollary \ref{hitsseg} as $
T_{end,7}(A^*)$ does not intersect the Segre. Indeed, if it intersected Segre we would have $x_1=x_4=0$, and $(x_2+x_7)x_2=0$.  If $x_2= 0$, then $x_7^2=(x_2+x_7)x_7=0$, which implies $x_3=x_5=x_6=0$ and gives a contradiction. If $x_2+x_7= 0$ we obtain $x_3=x_5=x_6=0$. This implies $x_7=0$ and leads to a contradiction. \end{proof}

The following proposition yields families of  $End$-closed tensors of large  rank  for large $m$ 
and border rank greater
than $m$ as soon as $m\geq 8$:
\begin{proposition} Let $T\in A\ot B\ot C=\BC^m\ot \BC^m\ot \BC^m$
be such that
$T=a_1\ot (b_1\ot c_1+\cdots b_m\ot c_m)+ T'$
where $T'\in A'\ot B'\ot C':=\tspan\{ a_2\hd a_m\}
\ot \tspan\{ b_1\hd b_{\lfloor \frac m2\rfloor}\}
\ot \tspan\{ c_{\lceil \frac m2\rceil}\hd c_m\}$. Then $\bold R(T)= \bold R(T')+m$.
\end{proposition}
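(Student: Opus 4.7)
The upper bound $\bold R(T)\leq \bold R(T')+m$ is immediate: concatenate the obvious $m$-term decomposition of $a_1\otimes(b_1\otimes c_1+\cdots+b_m\otimes c_m)$ with any rank-$\bold R(T')$ decomposition of $T'$.

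For the lower bound my plan is to invoke Proposition~\ref{prop:slice} iteratively, using the ``identity rank-one slices'' of $T$ as the distinguished slice $M_1$ at each step. The key combinatorial observation is that for every $i>\lfloor m/2\rfloor$ the slice $T(b_i^*)\in A\otimes C$ equals $a_1\otimes c_i$ (a rank-one matrix), because the hypothesis $b_i\notin B'$ forces $T'(b_i^*)=0$; by symmetry, for every $j<\lceil m/2\rceil$ the slice $T(c_j^*)$ equals $a_1\otimes b_j$, also rank one. I will peel these off one at a time, in any order, alternating or not between the $B$- and $C$-directions.

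The reason the iteration works is that \emph{the rank-one property of the remaining identity slices is preserved under each substitution}: since each $M_1$ has the special form $a_1\otimes(\text{vector})$, applying Proposition~\ref{prop:slice} modifies the other identity slices only by scalar multiples of $a_1\otimes(\text{vector})$; hence after removing $b_{i_0}$ (with parameters $\lambda_i$) any other $b_i^*$-slice with $i>\lfloor m/2\rfloor$ becomes $a_1\otimes(c_i-\lambda_i c_{i_0})$, which is again of the form $a_1\otimes(\text{vector})$ and thus still rank one, and likewise for the $C$-direction slices and for the mixed iteration. Consequently the ``moreover'' clause of Proposition~\ref{prop:slice} applies at \emph{every} step and for \emph{every} choice of parameters, guaranteeing that the rank drops by exactly one per substitution.

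After exhausting all of the identity rank-one slices I arrive at a residual tensor $T^{\mathrm{fin}}_{\vec\lambda}$ lying in a smaller product $A\otimes B'\otimes C'$, and the crucial invariant is this: its projection onto $A'\otimes B'\otimes C'$ (killing the $a_1$-component) equals $T'$ exactly, \emph{independent of all the parameters}, because every substitution only alters entries by multiples of $a_1$ and so leaves the $A'$-part of each entry untouched. Since projection cannot increase tensor rank, this yields $\bold R(T^{\mathrm{fin}}_{\vec\lambda})\geq \bold R(T')$ for every $\vec\lambda$; chaining with the $m$ rank-one reductions then gives $\bold R(T)\geq \bold R(T')+m$, matching the upper bound. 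The main delicate point is the parity-dependent bookkeeping: for $m$ odd the $B'$- and $C'$-indices are disjoint, so one extracts exactly $m$ rank-one identity slices and the argument runs cleanly; for $m$ even one must argue more carefully about the overlap index $m/2$ (where $b_{m/2}\in B'$ and $c_{m/2}\in C'$) to recover the full count, e.g.\ by a final substitution on a suitably chosen slice of $T^{\mathrm{fin}}_{\vec\lambda}$ whose rank-one structure is guaranteed by the invariance of the diagonal contribution of the identity part.
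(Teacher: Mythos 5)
Your strategy --- iterated substitution on the rank-one ``identity'' slices, combined with the observation that the $A'$-projection of the residue is $T'$ independently of all the reduction parameters --- is exactly what the paper's one-line appeal to the substitution method amounts to, and for $m$ odd your argument is correct. One small point of logic: the lower bound is driven by the \emph{first} assertion of Proposition~\ref{prop:slice} ($\bold R(T)\geq 1+\bold R(\tilde T_\lambda)$ for \emph{some} $\lambda$), iterated, together with your parameter-independent lower bound on the rank of the final residue; the ``moreover'' clause gives the reverse inequality $\bold R(\tilde T_\lambda)\geq \bold R(T)-1$ and does not do the work here. What the $a_1\ot(\text{vector})$ shape of the removed slices actually buys you is (i) nonvanishing, so the iteration can proceed, and (ii) trivial $A'$-projection, so the $A'$-part is never disturbed.

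The genuine gap is your treatment of $m$ even. As you note, the overlap at $m/2$ leaves only $m-1$ identity rank-one slices, and you appeal to a ``final substitution on a suitably chosen slice of $T^{\mathrm{fin}}_{\vec\lambda}$'' to recoup the missing reduction. No such step can exist, because the statement as literally printed is \emph{false} for $m$ even: take $m=4$ and $T'=a_2\ot b_2\ot c_2$, which lies in $A'\ot B'\ot C'$ since $b_2\in B'=\tspan\{b_1,b_2\}$ and $c_2\in C'=\tspan\{c_2,c_3,c_4\}$. Then $T=a_1\ot b_1\ot c_1+(a_1+a_2)\ot b_2\ot c_2+a_1\ot b_3\ot c_3+a_1\ot b_4\ot c_4$, so $\bold R(T)=4$, whereas $\bold R(T')+m=5$. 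The intended hypothesis --- consistent with the paper's displayed matrix, whose $*$-block is strictly off-diagonal --- is $C'=\tspan\{c_{\lfloor m/2\rfloor+1},\dots,c_m\}$; this coincides with $\lceil m/2\rceil$ for $m$ odd and differs for $m$ even. Under that reading the index sets of $B'$ and $C'$ are disjoint, one has exactly $m$ identity rank-one slices for every $m$, and your argument runs identically in both parities with no special final step.
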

\begin{proof}
We have
$$T(A^*)\subset \begin{pmatrix}
x_1 & & & & & \\
& \ddots & & & & \\
& & x_1 & & & &\\
* & \cdots & * & x_1 & &\\
\vdots & \vdots & \vdots & & \ddots & \\
* &\cdots & * & & & x_1\end{pmatrix}.
$$
The lower bound on rank is obtained by  substitution method  and the
upper bound follows as rank is sub-additive.
\end{proof}

\begin{corollary}
A general element of $End-Abel_A$ has border rank at least $\frac{m^2}8$, which
is approximately
$\frac 38$-of the maximal  border  rank of a general tensor in $\BC^m\ot \BC^m\ot \BC^m$.
\end{corollary}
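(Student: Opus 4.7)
The plan is to exhibit a large subfamily $\mathcal F\subset End$-$Abel_A$ and to lower-bound the border rank of its general member by a generic border rank estimate in a smaller ambient space. Let $\mathcal F$ be the affine family $T=a_1\ot(b_1\ot c_1+\cdots+b_m\ot c_m)+T'$ with $T'\in A'\ot B'\ot C'$ varying, as in the preceding proposition. First I would verify $\mathcal F\subseteq End$-$Abel_A$ by writing $\cE_{\alpha_1}(T)=T(A^*)=\{\alpha_1 I+N_\alpha : \alpha\in A^*\}$ with $N_\alpha\in T'(A'^*)\subseteq B'\ot C'$. For $m$ odd the row index set $[1,\lfloor m/2\rfloor]$ of $B'$ and the column index set $[\lceil m/2\rceil, m]$ of $C'$ are disjoint, so $N_\alpha N_\beta=0$; then
$$
(\alpha_1 I+N_\alpha)(\beta_1 I+N_\beta)=\alpha_1\beta_1 I+\alpha_1 N_\beta+\beta_1 N_\alpha\in T(A^*),
$$
so $\cE_{\alpha_1}(T)$ is abelian and $\tend$-closed. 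For $m$ even the two index sets meet in a single index, and I would restrict $\mathcal F$ to $T'$ vanishing at that overlap, losing only $O(m)$ parameters.

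Next I would prove the inequality $\ur(T)\geq \ur(T')$ by projecting on the $A$ factor. The quotient $\pi_A\colon A\to A'=A/\langle a_1\rangle$ induces $\pi_A\ot\mathrm{id}_B\ot\mathrm{id}_C\colon T\mapsto T'$. Linear maps on the ambient space do not increase border rank (they send each rank-one summand in any bordering expression to a tensor of rank at most one, and they commute with limits), and border rank is invariant under embedding of ambient vector spaces, so $\ur_{A\ot B\ot C}(T)\geq \ur_{A'\ot B'\ot C'}(T')$.

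Then I would apply the standard parameter count for secant varieties: the affine cone over $\sigma_r(Seg(\BP A'\times \BP B'\times \BP C'))$ has dimension at most $r(a+b+c-2)$ with $(a,b,c)=(m-1,\lfloor m/2\rfloor,\lfloor m/2\rfloor+1)$, so a general $T'$ has border rank at least $\lceil abc/(a+b+c-2)\rceil$. A direct calculation gives $(2k-1)k(k+1)/(4k-2)=k(k+1)/2\geq m^2/8$ for $m=2k$, and the analogous expression for $m=2k+1$ also exceeds $m^2/8$.

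Combining the three steps, a general $T\in\mathcal F$ has $\ur(T)\geq m^2/8$, and since $\mathcal F$ sits inside $End$-$Abel_A$ this yields the claim for a general element of $End$-$Abel_A$. The main obstacle I expect is the interpretation of ``general'' in $End$-$Abel_A$, which we take in the natural sense afforded by the subfamily $\mathcal F$, together with the verification of the $\tend$-closed property in the borderline even-$m$ case handled by the minor adjustment described above.
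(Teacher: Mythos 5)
Your proposal follows essentially the same route as the paper's one-line argument: specialize to the family $T=a_1\ot(b_1\ot c_1+\cdots+b_m\ot c_m)+T'$ from the preceding proposition, observe that $\ur(T)\geq\ur(T')$ by projecting out $a_1$, and apply the standard secant-variety dimension count to a general $T'\in\BC^{m-1}\ot\BC^{\lfloor m/2\rfloor}\ot\BC^{\lceil m/2\rceil}$. The explicit verification that this family lies in $End\text{-}Abel_A$ (via the block-nilpotent structure giving $N_\alpha N_\beta=0$, with your minor adjustment to keep the index sets of $B'$ and $C'$ disjoint when $m$ is even) is a detail the paper leaves implicit but is exactly the right justification.
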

\begin{proof}
 $\ur(T)\geq \ur(T')\geq \frac{m^2}8$ for general $T'\in \BC^{m-1}\ot \BC^{\lfloor \frac m2\rfloor}
\ot \BC^{\lceil \frac m2\rceil}$.  
\end{proof}

\subsection{A generic $A$-abelian, $\tend$-closed tensor satisfying $\BP T(A^*)\cap Seg(\BP B\times \BP C)\neq\emptyset$  has high border rank}
\begin{proposition} Let $m-1=k+\ell$ with $3(k+\ell)<k\ell+5$. Then an abelian
tensor
$T\in \BC^m\ot \BC^m\ot \BC^m$
  of the form $T=\Mone\op T'$ with $\Mone\in \BC^1\op \BC^1\op \BC^1$
  and
$T'\in \BC^{m-1}\ot \BC^{m-1}\ot \BC^{m-1}=A'\ot B'\ot C'$ of the form
$a_1\ot(b_1\ot c_1+\cdots +b_{m-1}\ot c_{m-1})+T''$ with
$T''\in \BC^{m-2}\ot \BC^k\ot \BC^{\ell}$ general, where $\BC^k=\tspan\{b_1\hd
b_k\}$
and $\BC^\ell=\tspan\{c_k+1\hd c_{m-1}\}$, has
$\ur(T)> m $ despite being  $1_A$-generic, abelian,  $\tend$-closed,  and such
that
$\BP T(A^*)\cap Seg(\BP B\times \BP C)\neq\emptyset$.
\end{proposition}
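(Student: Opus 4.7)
The plan is to verify the four structural properties by inspecting the block form of $T(A^*)$, and then obtain $\ur(T)>m$ from the contrapositive of Proposition~\ref{infflagprop} applied with $X_0=e_{00}$.

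Writing $B\otimes C$ as $m\times m$ matrices indexed by $0,\ldots,m-1$: the slice $T(\alpha_0)=e_{00}$, the slice $T(\alpha_1)=\mathrm{diag}(0,1,\ldots,1)$, and the slices $T(\alpha_j)$ for $j\geq 2$ lie in the off-block with rows in $\{1,\ldots,k\}$ and columns in $\{k+1,\ldots,m-1\}$. Taking $\alpha=\alpha_0+\alpha_1$ gives $T(\alpha)=\mathrm{Id}_m$, so $T$ is $1_A$-generic. Any product of two off-block matrices vanishes since the relevant row and column index sets are disjoint, and products involving $e_{00}$ or $\mathrm{diag}(0,1,\ldots,1)$ either vanish or reproduce the other factor; hence $T(A^*)$ is abelian and $\mathrm{End}$-closed, while $[e_{00}]=[b_0\otimes c_0]\in\BP T(A^*)\cap Seg(\BP B\times\BP C)$.

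To feed Proposition~\ref{infflagprop} I need (i) $\BP T(A^*)\cap Seg=\{[e_{00}]\}$ and (ii) $\BP(T(A^*)\cap\hat T_{[e_{00}]}Seg)$ containing no projective line. Condition (ii) is immediate: $\hat T_{[e_{00}]}Seg=b_0\otimes C+B\otimes c_0$ consists of matrices whose lower-right $(m-1)\times(m-1)$ block vanishes, and for $\lambda e_{00}+\mu\,\mathrm{diag}(0,1,\ldots,1)+w\in T(A^*)$ that block equals $\mu\,\mathrm{Id}_{m-1}+w$, forcing $\mu=0$ and $w=0$, so the intersection is $\mathrm{span}\{e_{00}\}$. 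For (i), a rank-one element of $T(A^*)$ is block-diagonal with blocks $\lambda$ and $\mu\,\mathrm{Id}_{m-1}+w$; since every element of $W$ squares to zero, the lower block has all eigenvalues equal to $\mu$, and rank one then forces either a multiple of $e_{00}$ or $\lambda=\mu=0$ with $w\in W$ a rank-one element of $\BC^k\otimes\BC^\ell$. So (i) reduces to showing that a generic subspace $W\in G(m-2,\BC^k\otimes\BC^\ell)$ contains no rank-one matrix.

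The hard part will be this last dimension count. The incidence variety $\{(W,[v]):v\in W,\,[v]\in Seg(\BP^{k-1}\times\BP^{\ell-1})\}$ fibers over $Seg$ with fibers $G(m-3,k\ell-1)$, so has dimension $(k+\ell-2)+(m-3)(k\ell-m+2)$, which falls short of $\dim G(m-2,\BC^k\otimes\BC^\ell)=(m-2)(k\ell-m+2)$ by $k\ell-(2m-5)$. The hypothesis $3(k+\ell)<k\ell+5$, rewritten using $k+\ell=m-1$ as $k\ell>3m-8$, comfortably implies $k\ell>2m-5$ for $m\geq 4$; hence the projection from this incidence variety to the Grassmannian is not dominant, a generic $W$ misses the Segre cone, and Proposition~\ref{infflagprop} delivers $\ur(T)>m$.
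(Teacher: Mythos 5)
Your proof is correct, but it follows a genuinely different route from the paper's. The paper deduces $\ur(T)>m$ from the flag condition of Corollary \ref{flagcor}: a flag would force the generic $(m-2)$-plane $W=T''((\BC^{m-2})^*)\subset \BC^k\ot\BC^\ell$ to meet $\s_2(Seg(\BP^{k-1}\times\BP^{\ell-1}))$, and the dimension count $\tdim \BP W+\tdim\s_2<k\ell-1$ is exactly the stated inequality $3(k+\ell)<k\ell+5$; the structural claims ($1_A$-generic, abelian, $\tend$-closed, nonempty Segre intersection) are left implicit. You instead invoke the infinitesimal flag condition (Proposition \ref{infflagprop}), which requires the extra work of showing that $\BP T(A^*)\cap Seg(\BP B\times\BP C)$ is the single point $[e_{00}]$ and that $T(A^*)\cap\hat T_{[e_{00}]}Seg(\BP B\times\BP C)=\langle e_{00}\rangle$, but then only needs the generic $W$ to miss the Segre itself, i.e.\ the weaker inequality $k\ell>2m-5$ rather than $k\ell>3m-8$; so your argument actually establishes the border rank conclusion under a milder genericity constraint, at the cost of the single-point and tangent-space verifications, and you also check the four structural properties explicitly, which the paper does not. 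Two minor points to make explicit: $A$-conciseness (needed in Proposition \ref{infflagprop}) holds because a general $T''$ gives an injective map $(\BC^{m-2})^*\to\BC^k\ot\BC^\ell$, so that $W$ really is $(m-2)$-dimensional and general in $G(m-2,\BC^k\ot\BC^\ell)$; and the rank computation for elements with $\mu\neq 0$ uses $m-1\geq 2$, which is automatic here since the hypothesis forces $m\geq 12$.
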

\begin{proof}
 For general $T''$ the space $T''((\BC^{m-2})^*) \in \BC^k\ot \BC^{\ell}$  will not intersect
$\s_2(Seg(\pp{k-1}\times \pp{\ell-1}))$ if
$\tdim \BP T''(A^*)+ \tdim \s_2(Seg(\pp{k-1}\times \pp{\ell-1}))<\tdim \BP
(\BC^k\ot \BC^\ell)$,
i.e., if
$[2(k+\ell-2)-1]+[k+\ell-1]<k\ell-1$, i.e., if $3(k+\ell)<k\ell+5$, so
$T$ violates the flag condition.
\end{proof}

\subsection{A tensor in $End-Abel_8$ of border rank $>8$
via Corollary \ref{hitsseg}}
Consider (from \cite[Prop. 19]{MR3503693})
\begin{align*}
T_{Leit,8}''=&a_1\ot (b_1\ot c_1+\cdots + b_8\ot c_8)
+a_2\ot(b_4\ot c_5 +b_3\ot c_6)+a_3\ot( b_3\ot c_5+b_2\ot c_6)+
a_4\ot (b_2\ot c_5+ b_1\ot c_6)\\
&+a_5\ot (b_4\ot c_7+b_3\ot c_8)
+a_6\ot(b_3\ot c_7+b_2\ot c_8) +a_7\ot (b_2\ot c_7+b_1\ot c_8)
+a_8\ot (b_1\ot c_7+b_4\ot c_6)
\end{align*}
so
$$
T_{Leit,8}''(A^*)
=\begin{pmatrix}
x_1& & & & & & & \\
&x_1 &  & & & & & \\
& &x_1 & & & & & \\
& & &x_1  & & & & \\
&x_4 &x_3 &x_2 &x_1  & & & \\
x_4&x_3 &x_2 &x_8 & &x_1  & & \\
x_8& x_7&x_6  &x_5 & & &x_1  & \\
x_7&x_6 & x_5& & & & & x_1
\end{pmatrix}.
$$
Since $\BP T(A^*)\cap Seg(\BP B\times \BP C)=\emptyset$,
  $\ur(T_{Leit,8}'')>8$. In Leitner's language the bound arises because the
corresponding group
does not contain a one parameter subgroup of rank  one.

\subsection{$A$-abelian tensors that intersect  the
Segre and fail to satisfy flag condition}
 The following example answers a question about  sufficient conditions to be a limit of diagonalizable groups 
  in \cite[p.~10]{MR3503693}. 

\begin{example} Consider $T_{s9}:= \Mone\op T_{Leit,8}'' \in \BC^9\ot \BC^9\ot
\BC^9=A\ot B\ot C$.
To keep indices consistent with above, write $\Mone=a_0\ot b_0\ot c_0$, so they
range from $0$ to $8$.
Note that $T_{s9}\cap Seg(\pp 8\times \pp 8\times \pp 8)\neq \emptyset$, and
$T_{s9}$ is $1_A$-generic and abelian, however $\ur(T_{s9})>9$.
To see this, note that the flag condition fails because  there is
no $\pp 1$ contained in $\s_2(Seg(\pp 8\times \pp 8))$.
\end{example}

\subsection{An End-closed tensor satisfying the flag condition but not the
infinitesimal flag condition} \label{schemefails}

Consider $$
T_{flagok}(A^*):=
\begin{pmatrix}
x_1&&&&&&&&\\
x_0&x_1& & & & & & & \\
&&x_1 &  & & & & & \\
&& &x_1 & & & & & \\
&& & &x_1  & & & & \\
&&x_4 &x_3 &x_2 &x_1  & & & \\
x_4&&x_3 &x_2 &x_8 & &x_1  & & \\
x_8&& x_7&x_6  &x_5 & & &x_1  & \\
x_7&&x_6 & x_5& & & & & x_1
\end{pmatrix}
$$
  Here  $\BP T_{flagok}(A^*)\cap Seg(\pp 8\times \pp 8)=[X_0]$,
where $X_0$ is the matrix with $1$ in the $(2,1)$ entry and zero elsewhere,
and
$$
\hat  T_{[X_0]}Seg(\pp 8\times \pp 8)
=\begin{pmatrix}
* & 0 & \cdots & 0\\
* & * &\cdots & *\\
* & 0 & \cdots & 0\\
\vdots & & \vdots & \\
* & 0 &\cdots & 0
\end{pmatrix}
$$
which only intersects   $T_{flagok}(A^*)$ in $[X_0]$. Thus  by Proposition
\ref{infflagprop},  $\ur(T_{flagok})>9$.

 The  flag condition is satisfied:   consider   respectively spaces spanned by $x_0,x_4,x_3,x_2,x_8, x_7,x_6,x_{ 5}$. It straightforward to check that
$T_{flagok}$ is
  $\tend$-closed.

 \subsection{$1$-generic abelian tensors}

In general, the spaces   $T(A^*),T(B^*),T(C^*)$ can be very different,
e.g.~if $T$ is $1_A$-generic and not $1_B$-generic, or if $T$
is not abelian. The following proposition 
is a variant of remarks in \cite[\S 7.7.2]{MR2865915} and \cite{MR2996364}:

\begin{proposition}\label{surpriseprop} Let $T\in A\ot B\ot C=\BC^m\ot \BC^m\ot
\BC^m$ be $1_A$ and $1_B$ generic and satisfy the $A$-Strassen's equations.
Then $T$ is isomorphic to a tensor
in $S^2\BC^m\ot \BC^m$.

In particular:

\begin{enumerate}
\item After making choices of general $\a\in A^*$ and $\b\in B^*$, $T(A^*)$ and
$T(B^*)$ are $GL_m$-isomorphic   subspaces
of $\tend(\BC^m)$.
\item If $T$ is $1$-generic, then $T$ is isomorphic to a tensor in $S^3\BC^m$.
\end{enumerate}
\end{proposition}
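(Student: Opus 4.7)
The plan is to leverage the two genericity hypotheses to identify $A$, $B$, and $C^*$ with a single vector space in such a way that Strassen's commutativity equations become an honest tensor symmetry. First, using $1_A$-genericity, pick $\alpha_0 \in A^*$ with $T(\alpha_0) \colon C^* \to B$ invertible and use it to identify $C^* \cong B$. Under this identification $T$ lies in $A \ot \tend(B)$, $T(\alpha_0)$ becomes $I_B$, and the image $\cE := \{T_\alpha := T(\alpha)T(\alpha_0)^{-1}\}\subset \tend(B)$ is an abelian subspace by $A$-Strassen. Next, $1_B$-genericity provides $\beta_0 \in B^*$ such that $T(\beta_0)\colon A^* \to C$ is invertible; in our identifications this is exactly the statement that $\phi \colon A^* \to B^*$, $\phi(\alpha) := T_\alpha^T \beta_0$, is invertible, i.e., $\beta_0$ is a cyclic vector for the transposed algebra $\{N^T : N \in \cE\}$ on $B^*$. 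Use $\phi$ to identify $A = B$, placing $T$ in $B \ot B \ot B^*$.

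The crux is the symmetry check in the first two factors. A direct unwinding shows that the transferred tensor satisfies $\hat T(\beta', \beta, v) = \beta\bigl(T_{\phi^{-1}(\beta')}(v)\bigr)$ for $\beta,\beta' \in B^*$ and $v \in B$. Writing $N := T_{\phi^{-1}(\beta)}$ and $N' := T_{\phi^{-1}(\beta')}$, and using $\beta = N^T\beta_0$ and $\beta' = (N')^T\beta_0$ (both built into the definition of $\phi$), the desired equality $\hat T(\beta', \beta, v) = \hat T(\beta, \beta', v)$ becomes $\beta_0(N'Nv) = \beta_0(NN'v)$, which holds because $\cE$ is abelian. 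This proves the main claim that $T$ is isomorphic to a tensor in $S^2\BC^m \ot \BC^m$. Part (1) is immediate: after the iso, $T(A^*)$ and $T(B^*)$ coincide as subspaces of $B \ot C$ by the first-two-factor symmetry, hence are $\GL_m$-isomorphic (indeed equal) as subspaces of $\tend(\BC^m)$ after a common choice of inverse.

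For part (2), assume $T$ is additionally $1_C$-generic. After the main-statement iso, write $\tilde T \in S^2 V \ot V^*$ with $V = \BC^m$. The (iso-preserved) $1_C$-genericity supplies $\gamma_0$ with $q := \tilde T(\gamma_0) \in S^2 V$ nondegenerate; use $q$ to identify $V^* \cong V$, producing $\hat T \in V^{\ot 3}$ with components $\hat T^{ijm} = \tilde T^{ij}_k q^{km} = \tilde T^{ij}_k \tilde T^{km}_\ell \gamma_0^\ell$. Symmetry in $(i,j)$ is inherited; for symmetry in $(j,m)$ one needs the index identity $\tilde T^{ij}_k \tilde T^{km}_\ell = \tilde T^{im}_k \tilde T^{kj}_\ell$, which I would verify by two applications of $A$-Strassen (in the form $\tilde T^{ab}_c \tilde T^{a'c}_d = \tilde T^{a'b}_c \tilde T^{ac}_d$) interleaved with the first-two-factor symmetry. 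Since $(12)$ and $(23)$ generate $\FS_3$, the resulting $\hat T$ lies in $S^3 V$.

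The main obstacle is recognizing the correct isomorphism $\phi\colon A \to B$ under which commutativity of $\cE$ becomes a symmetry of $T$; once $\phi$ is constructed from a cyclic vector supplied by $1_B$-genericity, the symmetry check collapses to the tautology $NN' = N'N$ for $N,N' \in \cE$. For (2), the parallel point is that $A$-Strassen combined with the first-two-factor symmetry already established suffices to produce the remaining transposition, once the third factor is dualized via the canonical quadratic form $q$ coming from $1_C$-genericity.
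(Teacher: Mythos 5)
Your proof is correct, and for the main claim it is essentially a coordinate-free rendering of the paper's coordinate argument: the paper normalizes bases so that $t^{1jk}=\delta_{jk}$ and $t^{i1k}=\delta_{ik}$ (which are precisely your identifications $C^*\cong B$ via $T(\alpha_0)$ and $A^*\cong B^*$ via $\phi$), after which setting $j=1$ in the commutator $\sum_l t^{i_1jl}t^{i_2lk}-t^{i_2jl}t^{i_1lk}=0$ yields the symmetry; your version packages the same computation as $\beta_0(NN'v)=\beta_0(N'Nv)$, with the observation that $\beta_0$ being cyclic for $\cE^T$ is exactly $1_B$-genericity. The two routes buy the same thing, and the trade is cosmetic rather than structural. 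Where you genuinely diverge is part (2): the paper reapplies the main claim to the triple $(A,C,B)$ (implicitly using that the $A$-Strassen equations are preserved under swapping $B$ and $C$, since $T(\alpha_0)^{-1}T(\alpha_1)$ and $T(\alpha_1)T(\alpha_0)^{-1}$ are conjugate) to produce $L_C$ directly, then notes that symmetry under $(12)$ and $(13)$ forces $S^3$; your version raises the last index by $q=\tilde T(\gamma_0)$ and checks the $(23)$-symmetry by an explicit index identity. That identity, which you only sketch, does hold: from $\sum_k t^{ij}_k t^{km}_\ell\gamma_0^\ell$, use $t^{km}_\ell=t^{mk}_\ell$, apply Strassen in the form $\sum_k t^{ij}_k t^{mk}_\ell=\sum_k t^{mj}_k t^{ik}_\ell$, use $t^{mj}_k=t^{jm}_k$, apply Strassen once more to get $\sum_k t^{im}_k t^{jk}_\ell$, and finish with $t^{jk}_\ell=t^{kj}_\ell$. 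So your route is valid, but the paper's two-applications-of-the-main-claim argument avoids the index gymnastics entirely and is the cleaner path; it would be worth spelling out the computation if you keep your version.
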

\begin{proof} Let $\{a_i\},\{b_j\},\{c_j\}$ respectively be bases of $A,B,C$.
Write $T=\sum t^{ijk}a_i\ot b_j\ot c_k$. Possibly after a change of basis we may
assume
$t^{1jk}=\d_{jk}$ and $t^{i1k}=\d_{ik}$. Take $\{\a^i\}$ the dual basis to $\{a_j\}$ and
identify $T(A^*)\subset \tend(\BC^m)$ via $\a^1$. Strassen's $A$-equations then
say
$$
0=[T(\a^{i_1}),T(\a^{i_2})]_{ (j,k)} =\sum_l t^{i_1 jl}t^{i_2 lk}- t^{i_2 jl}t^{i_1 lk} \
\forall i_1,i_2,j,k.
$$
Consider when $j=1$:
$$
0=\sum_l t^{i_1 1l}t^{i_2 lk}- t^{i_2 1l}t^{i_1 lk}= t^{i_2 i_1 k}-  t^{i_1
i_2k}  \ \forall i_1,i_2, k,
$$
because $t^{i_1 1l}=\d_{i_1,l}$  and $t^{i_2 1l}=\d_{i_2,l}$. But this says $T\in S^2\BC^m\ot \BC^m$.

For the last assertion, say $L_B: B\ra A$ is such that
$Id_A\ot L_B\ot Id_C(T)\in S^2A\ot C$ and $L_C: C\ra A$ is
such that $Id_A\ot Id_B\ot L_C\in S^2A\ot B$. Then
$Id_A\ot L_B\ot L_C(T)$ is in $A^{\ot 3}$, symmetric in the first and second factors as well
as the first and third. But $\FS_3$ is generated by two transpositions, so
$Id_A\ot L_B\ot L_C(T)\in S^3A$.
\end{proof}

Thus   the $A,B,C$-Strassen's equations, despite being very different modules,
when restricted to $1$-generic tensors,  all have the same zero sets.
Strassen's equations in the case of partially symmetric tensors were essentially
known to Emil Toeplitz \cite{MR1509924}, and
in the symmetric case to Aronhold \cite{MR1579064}.

\begin{proposition}  There exist $1$-generic abelian tensors $T\in  \BC^{4k}\ot \BC^{4k}\ot \BC^{4k}$ that have
border rank at least
  $ \frac{k(k+1)}{6} $. In particular,  there exist tensors
    in $\BC^{4k}\ot \BC^{4k}\ot \BC^{4k}$ satisfying the Strassen-Aronhold
  equations with $ \ur(T)\geq \frac{k(k+1)}{6} $.
\end{proposition}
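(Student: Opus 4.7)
My plan is to first invoke Proposition~\ref{surpriseprop}: every $1$-generic tensor satisfying Strassen's equations is isomorphic to a symmetric tensor in $S^3\BC^{4k}$, so it is enough to exhibit cubic forms in $4k$ variables that are $1$-generic, satisfy the Strassen--Aronhold equations, and have border rank at least $k(k+1)/6$. I will build these as the multiplication tensors of graded Gorenstein Artin algebras of socle degree three.

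Concretely, let $U=\BC^{2k-1}$ and let $g\in S^3 U^*$ be a generic cubic; its apolar algebra $A=A_g=S^\bullet U/g^\perp$ is a graded Gorenstein Artin algebra with Hilbert function $(1,2k-1,2k-1,1)$, so $\tdim A=4k$. Letting $\int:A\to\BC$ be the Frobenius form (projection onto the socle $A_3$), I define $T\in S^3 A^*$ by $T(a,b,c):=\int(abc)$. After choosing a basis of $A$ this becomes a cubic in $4k$ variables, and taking $\alpha_0=1\in A$ gives $T(\alpha_0)(b,c)=\int(bc)$, the nondegenerate Frobenius bilinear form, so $T$ is $1_A$-generic. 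A short computation then shows that $T(\alpha)\circ T(\alpha_0)^{-1}$ is conjugate, via the Frobenius isomorphism $A\cong A^*$, to left multiplication $L_\alpha:A\to A$; commutativity of $A$ then gives that $\cE_{\alpha_0}(T)$ is abelian, so $T\in Abel_A$.

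The heart of the argument is the border rank bound. By the grading of $A$ and the definition of $\int$, for any $a,b,c\in A_1$ the product $abc$ lies in $A_3\cong\BC$, and the restriction of $T$ to $A_1\ot A_1\ot A_1$ is (up to a nonzero scalar) the apolar cubic $g$ under the identification $A_1\cong U$. Hence any border rank decomposition of $T$ projects, via $A^*\twoheadrightarrow A_1^*$ in each factor, to a border rank decomposition of $g$, giving $\ur(T)\geq\ur(g)$. By Alexander--Hirschowitz, a generic $g$ satisfies $\ur(g)=\lceil 2k(2k+1)/6\rceil$, which comfortably exceeds $k(k+1)/6$ (the lone exceptional case for cubics, $\tdim U=5$, still gives $\ur(g)=8\geq k(k+1)/6$). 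The main technical obstacle I anticipate is carefully tracking the apolarity identifications so that $T|_{A_1^{\ot 3}}$ really is identified with $g$, and confirming that a generic $g$ yields an apolar algebra of the stated Hilbert function.
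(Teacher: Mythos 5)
Your route is genuinely different from the paper's and most of it is sound. The paper exhibits an explicit $4k\times 4k$ space of symmetric matrices (with variables $z,x_i,y_i,z_i$ placed to force $1$-genericity and Strassen's equations) that degenerates to a generic element of $S^2\BC^k\ot\BC^{k-1}$, and then bounds the border rank of that generic element by a secant-variety dimension count. You instead take the Frobenius-twisted structure tensor of the apolar Gorenstein algebra of a generic cubic $g$ in $2k-1$ variables: the Hilbert function $(1,2k-1,2k-1,1)$ gives dimension $4k$, the Gorenstein pairing gives $1$-genericity, commutativity of the algebra gives the abelian/Strassen condition, and the restriction of $T$ to $A_1\ot A_1\ot A_1$ recovers $g$, so $\ur(T)\geq\ur(g)$ since border rank is non-increasing under applying linear maps to the factors. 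All of that is correct, and conceptually it is an attractive alternative (it produces a whole moduli of examples, in the spirit of the paper's symmetric reduction via Proposition \ref{surpriseprop}).

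The gap is the last step. Alexander--Hirschowitz computes the \emph{symmetric} (Waring) border rank of a generic cubic, i.e.\ border rank with respect to the Veronese variety. Since every Waring decomposition is in particular a tensor decomposition, the symmetric border rank only bounds the tensor border rank from \emph{above}, whereas your chain $\ur(T)\geq\ur(g)$ needs a lower bound on $\ur(g)$ as a point of $\BC^{2k-1}\ot\BC^{2k-1}\ot\BC^{2k-1}$; equality of the two border ranks for generic cubics is not something you can simply cite. The step is easily repaired, however, by the same kind of count the paper uses: if the generic cubic had tensor border rank less than $r$, then all of $S^3\BC^{2k-1}$ would lie in $\hat\sigma_r(Seg(\BP^{2k-2}\times\BP^{2k-2}\times\BP^{2k-2}))$, whose dimension is at most $r(6k-5)$, so $\binom{2k+1}{3}\leq r(6k-5)$. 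Hence for generic $g$ one has $\ur(g)\geq \binom{2k+1}{3}/(6k-5)$, and the inequality $\binom{2k+1}{3}/(6k-5)\geq k(k+1)/6$ reduces to $2k^2-k+3\geq 0$, which always holds. With that substitution for the Alexander--Hirschowitz citation, your proof is complete.
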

 
 \begin{proof} We exhibit  a family of  $4k$-dimensional  subspaces of $
S^2\BC^{4k}$
 whose associated tensor
 can degenerate to a general tensor $T'\in S^2\BC^k \otimes \BC^{k-1}$.
 Since the border rank of a general tensor in $S^2\BC^k \otimes \BC^{k-1}\subset \BC^k\ot\BC^k\ot\BC^{k-1}$ must satisfy:
$$
\dim\s_{\ur(T')}(Seg(\BP^{k-1}\times\BP^{k-1}\times\BP^{k-2})) 
>\tdim \BP(S^2\BC^k \otimes \BC^{k-1}))=\frac{(k-1)k(k+1)}{2}-1,
$$
and $\dim\s_{r}(Seg(\BP^{k-1}\times\BP^{k-1}\times\BP^{k-2}))= r(3k-4)+r-1$
    (by \cite{MR87f:15017}),
we obtain:
$$\ur(T)\geq\ur(T')\geq \frac{k(k+1)}{6}.$$
 
Represent  $T$
  as a $4k\times 4k$ symmetric matrix $M$ with entries that are linear functions
  of  $4k$ variables. The variables will play four different
  roles, so we name them accordingly:
\begin{enumerate}
\item variable $z$,
\item variables $y_1,\dots,y_{k-1}$,
\item variables $x_1,\dots,x_k$,
\item variables $z_1,\dots,z_{2k}$.
\end{enumerate}
They appear in the given order (starting from the top) in the first column of
$M$,
ensuring $1_B$-genericity. This also defines the last row of $M$, ensuring
$1_C$-genericity. (Here we order the bases of $B$ and $C$ in opposite
order to have the matrix symmetric with respect to an anti-diagonal
reflection.)
The matrix $M$ will be lower triangular, with the variable $z$ on the diagonal
(ensuring $1_A$-genericity), and $z$  appears only on the diagonal.
The variables $z_1,\dots,z_{2k}$ appear only in the  last  row and  first  column as
defined previously.

Write a general tensor $T'\in S^2\BC^k \otimes \BC^{k-1}$ as $T'=\sum a_{ij}^l
e_i\otimes e_j\otimes f_l$, where $a_{ij}^l=a_{ji}^l$.
We use $T'$ to define the entries in rows $3k+1,\dots,4k-1$ and columns
$k+1,\dots,2k$: let  $\sum_{i=1}^k a_{is}^u x_i$ be the entry in 
the $(3k+u,k+s)$-position. As $M$ is
symmetric this defines also the entries in rows $2k+1,\dots,3k$
and columns $2,\dots, k$.

Apart from entries defined so far, the only 
remaining nonzero entries   belong  to a
$k\times k$ submatrix of rows from $2k+1,\dots,3k$ and columns
$k+1,\dots,2k$: the linear form in the
$y$'s    $\sum_{i=1}^{k-1} a_{(k+1-u),s}^{k-i} y_i$
is the $(2k+u,k+s)$-entry.

For example, when  $k=3$,

$$T(A^*)=
\begin{pmatrix}
z&&&&&&&&&&&\\
y_1&z&&&&&&&&&&\\
y_2&&z&&&&&&&&&\\
x_1&&&z&&&&&&&&\\
x_2&&&&z&&&&&&&\\
x_3&&&&&z&&&&&&\\
z_1&\sum_i a_{3i}^2x_i&\sum_i a_{3i}^1x_i&\sum_l a^{3-l}_{13}y_l&\sum_l
a^{3-l}_{23}y_l&\sum_l a^{3-l}_{33}y_l&z&&&&&\\
z_2&\sum_i a_{2i}^2x_i&\sum_i a_{2i}^1x_i&\sum_l a^{3-l}_{12}y_l&\sum_l
a^{3-l}_{22}y_l&\sum_l a^{3-l}_{23}y_l&&z&&&&\\
z_3&\sum_i a_{1i}^2x_i &\sum_i a_{1i}^1x_i&\sum_l a^{3-l}_{11}y_l&\sum_l
a^{3-l}_{12}y_l&\sum_l a^{3-l}_{13}y_l&&&z&&&\\
z_4&&&\sum_i a_{1i}^1x_i&\sum_i a_{2i}^1x_i&\sum_i a_{3i}^1x_i&&&&z&&\\
z_5&&&\sum_i a_{1i}^2x_i&\sum_i a_{2i}^2x_i&\sum_i a_{3i}^2x_i&&&&&z&\\
z_6&z_5&z_4&z_3&z_2&z_1&x_3&x_2&x_1&y_2&y_1&z\\
\end{pmatrix}.
$$

To prove that Strassen's equations are satisfied, take a matrix $M$ in variables
as above and a
matrix $M'$ with primed variables, then it is sufficient to
show that
each entry of the product matrix $MM'$  is symmetric as a bilinear form.
This is obvious for:
\begin{enumerate}
\item a product of any of the first $2k$ rows of $M$ with any column of $M'$,
\item a product of any of the rows $2k+1,\dots,4k-1$ of $M$ with
any of the columns $2,\dots,4k$ of
$M'$,
\item a product of the last row of $M$ with column $1$ or any of the columns
$2k+1,\dots,4k$ of $M'$.
\end{enumerate}
As all matrices are symmetric it remains to check the assumption for the product
of rows $2k+1,\dots,4k-1$ of $M$ and the first column of $M'$.

For the row $3k+l$ we obtain the symmetric linear form $\sum_j(\sum_i a^l_{ji}x_i)x_j'$.

For the row $2k+n$ we obtain the symmetric linear form $\sum_{l=1}^{k-1} (\sum_i
a_{k+1-n,i}^{k-l}x_i)y_l'+\sum_{u=1}^k  (\sum_l a_{(k+1-n),u}^{k-l}y_l)x_u'$.\end{proof}

\section{Tensors of border rank $m$ in $\BC^m\ot\BC^m\ot \BC^m$}\label{brkmsect}

In this section we present sufficient conditions for tensors
to be of minimal border rank and determine the ranks of several
examples of minimal border rank.

\subsection{Centralizers of a regular element}
Let $A=B=C=\BC^m$.
An element  $x\in \tend(B)$ is {\it regular}
if $\tdim  \bold C(x)=m$, where
$\bold C(x):=\{ y\in \tend(B)\mid [x,y]=0\}$ is the {\it centralizer} of $x$. We
say
$x$ is {\it regular semi-simple}  if
$x$ is diagonalizable with distinct eigenvalues. Note that  $x$ is regular
semi-simple
if and only if
  $\bold C(x)$ is diagonalizable.

The following proposition was  communicated to us by L.~Manivel.

\begin{proposition}\label{prop:regular} Let $U\subset \tend(B)$ be an abelian
subspace
of dimension $m$.
If there exists $x\in U$ that is regular, then
$U$ lies in the Zariski closure of the diagonalizable $m$-planes
in $G(m,\tend(B))$, i.e., $U\in Red(m)$. More generally, if there exist
$x_1,x_2\in U$, such that $U$ is their common centralizer, then $U\in Red(m)$.

\end{proposition}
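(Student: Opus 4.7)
The plan is to handle the two assertions separately, reducing the more general second statement to a continuity argument on the variety of abelian $2$-planes whose irreducibility is the classical Motzkin--Taussky/Gerstenhaber theorem. For the first statement, since $U$ is abelian and $x \in U$ is regular, $U \subseteq \bold C(x)$; comparing dimensions forces $U = \bold C(x)$. The regular semi-simple elements form a Zariski dense open subset of $\tend(B)$, so I may choose a curve $x_t \in \tend(B)$ with $x_0 = x$ and $x_t$ regular semi-simple for $t \neq 0$. On the regular locus $\tdim \bold C(z) = m$ is constant, hence $z \mapsto \bold C(z)$ is a morphism into $G(m, \tend(B))$. For $t \neq 0$, $\bold C(x_t)$ is the maximal torus diagonalized in an eigenbasis of $x_t$, so letting $t \to 0$ exhibits $U = \bold C(x)$ as a limit of diagonalizable $m$-planes, which is the definition of membership in $Red(m)$.

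For the second statement, put $W := \tspan\{x_1, x_2\} \subseteq U$. If $\tdim W = 1$ then $\bold C(x_1) = U$ has dimension $m$, so $x_1$ is regular and the first part applies. Otherwise $W$ is a $2$-dimensional abelian subspace of $\tend(B)$, and I would introduce the variety $\mathcal{A}_2 \subset G(2, \tend(B))$ of abelian $2$-planes. The commuting variety $\{(y_1, y_2) \in \tend(B)^2 : [y_1, y_2] = 0\}$ is irreducible by Motzkin--Taussky/Gerstenhaber, and projecting its open subset of linearly independent pairs to the Grassmannian shows $\mathcal{A}_2$ is irreducible as well. A generic $W' \in \mathcal{A}_2$ is spanned by a commuting pair one of whose members is regular semi-simple, so $\bold C(W')$ equals the maximal torus centralizing that element, a diagonalizable $m$-plane.

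To finish, I would pick a smooth curve $W_t$ in $\mathcal{A}_2$ with $W_0 = W$ and $W_t$ generic for $t \neq 0$. Upper semi-continuity of kernel dimension makes $\{W' \in \mathcal{A}_2 : \tdim \bold C(W') = m\}$ a Zariski open subset containing both $W$ and the generic point of $\mathcal{A}_2$; on this open locus $W' \mapsto \bold C(W')$ is a morphism into $G(m, \tend(B))$. Thus $\bold C(W_t)$ is a one-parameter family of diagonalizable $m$-planes whose limit is $\bold C(W_0) = U$, giving $U \in Red(m)$. The principal obstacle is producing the curve $W_t$ through $W$ whose general member has diagonalizable centralizer, and this is exactly where irreducibility of the commuting variety is indispensable: without it, $W$ might a priori lie on a component of $\mathcal{A}_2$ not meeting the diagonalizable locus.
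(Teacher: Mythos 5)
Your proof is correct and follows essentially the same route as the paper's: a curve of regular semi-simple elements whose centralizers degenerate to $U$ for the first statement, and approximation of commuting pairs by simultaneously diagonalizable commuting pairs for the second. The only difference is expository: you unpack the role of the irreducibility of the commuting variety (Motzkin--Taussky/Gerstenhaber) and the semicontinuity argument explicitly, whereas the paper delegates exactly this content to a citation of Prop.\ 4 of \cite{MR2202260}.
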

\begin{proof}
Since the Zariski closure of the set of  regular semi-simple   elements is all of $\tend
(B)$,
for any $x\in \tend(B)$,  there
exists a curve $x_t$ of regular semi-simple elements with $\tlim_{t\ra 0}
x_t=x$.
Consider the induced curve in the Grassmannian
$\bold C(x_t)\subset G(m,\tend(B))$. Then  $\bold C_0:=\tlim_{t\ra 0}\bold
C(x_t)$ exists and is
contained in $\bold C(x)\subset \tend(B)$ and since
$U$ is abelian, we also have $U\subseteq\bold  C(x)$. But if $x$ is regular,
then
$\tdim\bold  C_0=\tdim (U)=m$, so
$\tlim_{ t\ra 0}\bold C(x_t)$, $\bold C_0$ and $U$ must
be equal and thus $U$ is a limit of diagonalizable subspaces.

 The proof of the second statement is similar,
  as a pair of commuting matrices can be approximated by a pair of diagonalizable
commuting matrices and diagonalizable commuting matrices are simultaneously
diagonalizable, cf.~\cite[Prop.~4]{MR2202260}.
\end{proof}

\begin{corollary} \label{containsreg}Let $T\in A\ot \BC^m\ot \BC^m$ with $\tdim
A\leq m$ be
such that $T(A^*)$ contains an element of rank $m$ and, after using it
to embed $T(A^*)\subset \fgl_m$, it is abelian and contains a regular element.
Then $\ur(T)=m$.
\end{corollary}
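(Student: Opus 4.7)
My plan is to enlarge the given abelian subspace to the full centralizer of its regular element, apply Proposition \ref{prop:regular} there to land in $Red(m)$, and then translate the resulting curve of diagonalizable subalgebras into a curve of rank-one-spanned $m$-planes in $B\otimes C$ so that Proposition \ref{prop:tensorranktomatrix} finishes the job.

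Concretely, I would pick $\alpha_0\in A^*$ with $\trank(T(\alpha_0))=m$ and set $U:=T(A^*)T(\alpha_0)^{-1}\subseteq \fgl(B)$, which by hypothesis is abelian and contains a regular element $r$. Since every element of $U$ commutes with $r$, one has $U\subseteq \bold C(r)$, and $\bold C(r)$ is itself an abelian $m$-dimensional subspace containing a regular element (this is the defining property of regularity in $\fgl_m$). Proposition \ref{prop:regular} therefore gives $\bold C(r)\in Red(m)$, so there is a curve $U_t$ of diagonalizable $m$-planes in $\fgl(B)$ with $\tlim_{t\to 0}U_t=\bold C(r)$.

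Next I would set $E_t:=U_t\cdot T(\alpha_0)\subseteq B\otimes C$. For $t\neq 0$, $U_t$ is a Cartan subalgebra of $\fgl(B)$, so in a suitable basis it is spanned by the rank-one idempotents $E_{11},\dots,E_{mm}$; multiplying by the invertible matrix $T(\alpha_0)$ preserves rank, so $E_t$ is an $m$-plane spanned by $m$ rank-one tensors in $B\otimes C$. In the limit, $E_0=\bold C(r)\cdot T(\alpha_0)\supseteq U\cdot T(\alpha_0)=T(A^*)$, and Proposition \ref{prop:tensorranktomatrix} yields $\ur(T)\leq m$. The matching lower bound $\ur(T)\geq m$ is automatic since the $B$-flattening $T_B\colon B^*\to A\otimes C$ has rank at least $\trank(T(\alpha_0))=m$ (composing with $\alpha_0$ recovers $T(\alpha_0)$). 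Combining these gives $\ur(T)=m$.

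The main obstacle is simply the two standard Lie-theoretic facts one has to invoke: that $\bold C(r)$ is genuinely abelian and $m$-dimensional (the characterization of regular elements of $\fgl_m$), and that a diagonalizable $m$-plane in $\fgl(B)$ is automatically a full Cartan subalgebra and hence spanned by rank-one idempotents. Both are standard. Note that the plan handles the case $\dim A<m$ uniformly with $\dim A=m$, because enlarging $U$ to $\bold C(r)$ is harmless: Proposition \ref{prop:tensorranktomatrix} only requires the inclusion $T(A^*)\subseteq E_0$, not equality.
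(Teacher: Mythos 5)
Your proposal is correct and follows the route the paper intends: the corollary is stated as an immediate consequence of Proposition \ref{prop:regular}, and your argument simply makes the implicit steps explicit (enlarging $T(A^*)T(\alpha_0)^{-1}$ to the centralizer of the regular element, which is abelian and $m$-dimensional, then translating the limiting curve of diagonalizable $m$-planes back into rank-one-spanned planes in $B\ot C$ and invoking Proposition \ref{prop:tensorranktomatrix}, with conciseness giving the lower bound). No substantive difference from the paper's approach, and the details you supply are accurate.
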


\begin{proposition}\label{centralizerprop}
Let $T(A^*/{\BC}\a)\subset \fsl(B) $ be the centralizer of a regular element of
Jordan type
$(d_1\hd d_q)$. Then $\ur(T)=m$ and $\bold R(T) = 2(\sum_{j=1}^q  d_j) -q$. In
particular, if
$T(A^*)$ is the centralizer of a regular nilpotent element, then
$\bold R(T)= 2m-1$ and if it is the centralizer of a regular semi-simple element
then $\bold R(T)=m$.
\end{proposition}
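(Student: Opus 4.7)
For the border rank statement, note that the centralizer $T(A^*)\subset \fgl(B)$ (taking the $1_A$-generic representative coming from adjoining $\alpha$) is by hypothesis an abelian subspace of dimension $m$ containing the regular element itself. Hence Corollary \ref{containsreg} (via Proposition \ref{prop:regular}) immediately gives $\ur(T)=m$.

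For the rank, the key reduction is to observe that a regular element has distinct eigenvalues (since regularity is equivalent to the minimal polynomial equalling the characteristic polynomial, which forces exactly one Jordan block per eigenvalue). Consequently, the centralizer of a regular element with Jordan type $(d_1,\dots,d_q)$ splits as a direct sum of centralizers of single Jordan blocks, i.e.\ of upper triangular Toeplitz algebras of sizes $d_1,\dots,d_q$. Correspondingly, the tensor $T$ is isomorphic to a direct sum $T_1\oplus\cdots\oplus T_q$, where each $T_j\in\BC^{d_j}\ot\BC^{d_j}\ot\BC^{d_j}$ is the tensor associated to the centralizer of a single regular element of size $d_j$ (after shifting by identity, which does not affect rank).

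It remains to show $\bold R(T_j)=2d_j-1$ for every $j$. This is exactly the regular-nilpotent case. The upper bound $\bold R(T_j)\leq 2d_j-1$ is obtained by an explicit decomposition: take one rank-one summand for each of the $d_j-1$ off-diagonal Toeplitz generators together with a rank-one contribution handling both the identity and the remaining diagonal term, following the pattern of Example \ref{ex:aft}. The lower bound $\bold R(T_j)\geq 2d_j-1$ is proved by iterating the substitution method (Proposition \ref{prop:slice}) exactly as in Example \ref{ex:aft}: at each step one strips off the extremal row and column where a new variable appears alone, reducing the matrix of linear forms by one Toeplitz generator and decreasing the rank bound by $2$, with a final single-variable tensor contributing $1$.

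Finally, because each lower bound $\bold R(T_j)\geq 2d_j-1$ is obtained by the substitution method, Theorem \ref{addthm} applies and gives
\[
\bold R(T)=\sum_{j=1}^q \bold R(T_j)=\sum_{j=1}^q(2d_j-1)=2\Bigl(\sum_{j=1}^q d_j\Bigr)-q,
\]
which is the claimed formula, specializing to $2m-1$ when $q=1$ (regular nilpotent) and to $m$ when $q=m$ (regular semisimple). The main subtlety is verifying the block-diagonal decomposition of the centralizer rather than the rank count itself; once the decomposition is in hand, the individual block ranks and Theorem \ref{addthm} combine cleanly.
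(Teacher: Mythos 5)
Your structure mirrors the paper's: establish $\ur(T)=m$ from Corollary \ref{containsreg}, reduce the rank computation to the regular nilpotent blocks via the (correct) observation that a regular element has distinct eigenvalues and therefore its centralizer splits block-diagonally, prove a lower bound for each block by the substitution method, and combine with Theorem \ref{addthm}. All of that is sound, and the substitution-method lower bound $\bold R(T_j)\geq 2d_j-1$ does go through exactly as you describe: the first row and last column of the $d_j\times d_j$ lower-triangular Toeplitz matrix each contain only the diagonal variable, stripping them reduces to the Toeplitz tensor of size $d_j-1$ with two gained, and the base case is size one.

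The gap is the upper bound $\bold R(T_j)\leq 2d_j-1$. Your proposed decomposition --- ``one rank-one summand for each of the $d_j-1$ off-diagonal Toeplitz generators together with a rank-one contribution handling both the identity and the remaining diagonal term'' --- produces at most $d_j$ terms, not $2d_j-1$, and more fundamentally the off-diagonal Toeplitz generator $N^k$ (the $k$-th power of the nilpotent shift) has rank $d_j-k$, not rank one, so it cannot serve as a single rank-one summand. Example \ref{ex:aft}, to which you appeal, does not supply an explicit decomposition either; the paper there defers the upper bound \emph{to} Proposition \ref{centralizerprop}, so invoking it here would be circular. The paper instead obtains the upper bound by observing that the regular nilpotent centralizer tensor is a degeneration (restriction of $B$ and $C$ factors plus a projection of $A$) of the group-algebra tensor $M_{\BC[\BZ_{2m-1}]}$, which is diagonalizable and therefore has rank exactly $2m-1$; rank can only drop under such specializations. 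If you want an explicit decomposition instead, note that this tensor is the structure tensor of $\BC[x]/(x^{d_j})$, and the classical evaluation/interpolation scheme at $2d_j-1$ distinct points gives such a decomposition directly --- but your description as written does not yield one.
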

\begin{proof}
It remains to show the second assertion.
The substitution method  gives  the lower bound on $\bold R(T)$ for the
regular nilpotent case, and
  Theorem \ref{addthm} gives the lower bound for the general case.
For the upper bound, it is sufficient to prove the regular nilpotent case.
The tensor $M_{\BC[\BZ_{2m-1}]}$ (see Equation \eqref{gpalg}) specializes to
$T$ as follows: consider the space     $M_{\BC[\BZ_{2m-1}]}(A^*)$, cut the first
$\lfloor \frac m2\rfloor$ rows and the last $\lfloor \frac m2\rfloor$ columns
and set all entries
appearing above the diagonal in the remaining matrix to zero.
E.g., when $m=2$,
$$\begin{pmatrix} x_1 & x_2 & x_3\\ x_3&x_1&x_2\\
x_2&x_3&x_1\end{pmatrix} \ra
\begin{pmatrix}   x_3&x_1 \\
x_2&x_3 \end{pmatrix} \ra \begin{pmatrix}   x_3&0\\
x_2&x_3 \end{pmatrix}.
$$
\end{proof}

\subsection{The flag algebras of \cite{MR2202260}}\label{flagalgsect}
 We answer a question posed in
\cite[p.~4/p.~5]{MR2202260} whether certain algebras derived from flags belong
to $Red(m)$.
We start by presenting these algebras. Using matrix notation, the algebras   are
given by a partition $\l$ of size $|\l|=m-1$, to which
we associate a
  Young tableau with entries $ \{x_2\hd x_m\} $ whose reflection (across a
vertical line for the American presentation
and across a diagonal line for the French presentation) we situate in the upper
right hand block of the  $m\times m$ matrix $T(A^*)$
and we fill the diagonal with $x_1$'s.
For example $\l=(4,2,1)$ gives rise to 
the following $8$-dimensional subspace of $\BC^8\ot \BC^8$:
\be\label{fourtwooneold}
T(A^*)=\begin{pmatrix}
 x_1& & & &x_2& x_3& x_4&x_5 \\
 & x_1& & & & &x_6 &x_7 \\
  & & x_1& & & & &x_8\\
   & & &x_1& & & & \\
    & & & &x_1& & & \\
     & & & & & x_1& &\\
     & & & & & &x_1& \\
      & & & & & & &x_1 \\
\end{pmatrix}.
\ene

Call this space the abelian Lie algebra associated to the flag induced
by $\l$, and the corresponding tensor in $\BC^m\ot \BC^m\ot \BC^m$
the tensor associated to the flag induced by $\l$.
\begin{proposition}\label{prop:Young}
The abelian Lie algebras associated to flags, defined in
\cite[p.~4/p.~5]{MR2202260} belong to
 $Red(m)$, the closure of the diagonalizable algebras.
 That is,  the associated tensors to these abelien Lie algebras
are of border rank $m$.
\end{proposition}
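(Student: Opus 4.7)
First, I would verify that $U$ is an abelian subalgebra of $M_m$, so that by Proposition \ref{thm:brankdiag} it suffices to show $U$ lies in the closure of the diagonalizable $m$-dimensional subalgebras in $G(m, M_m)$. The non-identity generators are the matrix units $E_{ij}$ for $(i,j)$ running over the Young tableau associated to $\lambda$; the row indices lie in $\{1,\dots,\ell(\lambda)\}$ and the column indices lie in $\{m-\lambda_1+1,\dots,m\}$, and these two sets are disjoint because $\ell(\lambda) + \lambda_1 \leq |\lambda|+1 = m$. Consequently $E_{ij}E_{kl} = \delta_{jk}E_{il} = 0$ for every pair of tableau positions, so $U = \BC\, I \oplus V$ with $V^2 = 0$, confirming commutativity and giving the abelian subspace $\cE_{\alpha_0}(T) = U$.

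The core of the plan is an explicit one-parameter family of diagonalizable subalgebras $U_t \subset M_m$ with $\lim_{t \to 0} U_t = U$ in the Grassmannian. For each column index $k \in C := \{m-\lambda_1+1,\dots,m\}$ of the tableau set
\[
v_k(t) := \sum_{i\,:\,(i,k) \in \mathrm{tab}} e_i + t\,e_k,
\]
and for every other index $k$ set $v_k(t) := e_k$. These vectors form a basis of $\BC^m$ for $t \neq 0$, and $U_t := \operatorname{span}\{v_k(t) \otimes v_k(t)^*\}_k$ is the $m$-dimensional diagonalizable subalgebra simultaneously diagonalized in that basis. A direct computation of the dual basis (the change-of-basis matrix is block-upper-triangular with determinant a nonzero power of $t$) shows that, after rescaling the individual $v_k \otimes v_k^*$ by $t$ for $k \in R \cup C$, the limits at $t = 0$ are the row-sums $-\sum_{j\,:\,(k,j) \in \mathrm{tab}} E_{kj}$ for $k \in R$, the column-sums $\sum_{i\,:\,(i,k) \in \mathrm{tab}} E_{ik}$ for $k \in C$, and the diagonal matrix units $E_{kk}$ for any middle index $k \notin R \cup C$; the identity $I = \sum_k v_k(t) \otimes v_k(t)^*$ itself belongs to $U_t$ for every $t$. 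For a hook $\lambda = (k, 1, \dots, 1)$ no middle indices occur and these rank-one limits, together with $I$, already span $U$ exactly, so the Grassmannian limit of $U_t$ is $U$.

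The main obstacle will be treating non-hook partitions. In that case the row-sums and column-sums satisfy the linear relation $\sum_{k\in R}(\mathrm{row}_k) + \sum_{k\in C}(\mathrm{col}_k) = 0$, and whenever a column of the tableau contains two or more entries one cannot recover each $E_{ij}$ purely from row- and column-sums; the naive construction above then falls short by at least one dimension in the Grassmannian limit. To supply the missing directions I would refine the construction with more general $t$-dependent coefficients $v_k(t) = \sum_{i \in I_k} a_{i,k}(t)\,e_i + t\, e_k$ and couple any middle basis vectors to tableau rows or columns, so that subleading terms in $t$ contribute extra independent directions to the Grassmannian limit, then verify via a Pl\"ucker-coordinate calculation on $\bigwedge^m M_m$ that the limit is precisely $U$. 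Alternatively one can bypass the explicit basis computation by exploiting the fact that the abstract Artinian algebra $U \cong \BC[x_2,\dots,x_m]/\mathfrak{m}^2$ is a flat limit of $m$ distinct reduced points (and hence of $\BC^m$ as an algebra), and then lifting this smoothing through the given faithful $m$-dimensional representation of $U$ to a family of subalgebras $U_t \in G(m, M_m)$ converging to $U$, which by Proposition \ref{thm:brankdiag} yields $\ur(T) = m$.
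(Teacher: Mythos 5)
Your proposal does not reach a complete proof. The hook case you work out is believable, but you concede that for non-hook partitions your family $U_t$ does not converge to $U$ — indeed it is worse than a dimension deficit: the limiting directions you list include the diagonal units $E_{kk}$ for the middle indices, and these do not lie in $U$ (any element of $U$ has all diagonal entries equal), so the naive family converges to a different point of the Grassmannian. The fix you describe ("more general $t$-dependent coefficients", "couple middle basis vectors to rows or columns", "verify via a Pl\"ucker computation") is exactly the hard combinatorial content of the statement and is left as a plan, not an argument. For comparison, the paper's proof is precisely such a refined explicit degeneration: after reordering bases so the $x_1$'s sit on the anti-diagonal, it produces $m$ rank-one matrices in five groups, where each interior tableau entry is paired with an anti-diagonal position (a matrix with four nonzero entries and calibrated powers of $\epsilon$), and the zeroth row and column are handled by matrices with sign corrections so that all spurious directions cancel and the identity is recovered in the limit. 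None of that bookkeeping — which is where the difficulty lives — appears in your proposal.

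Your fallback route also has a genuine gap. It is true that the abstract algebra $U\cong\mathbb{C}[x_2,\dots,x_m]/\mathfrak m^2$ is smoothable (e.g.\ it is the flat limit of the origin together with the points $t e_i$ on the coordinate axes), but smoothability of the abstract algebra only controls the structure tensor of $U$, i.e.\ the regular representation. Here the faithful module $\mathbb{C}^m$ is not the regular module: in the regular representation every element of the square-zero ideal acts with rank one, whereas in the flag algebra a generic nilpotent element has rank $\ell(\lambda)>1$ as soon as $\lambda$ is not a single row. So "lifting this smoothing through the given faithful representation to a family of subalgebras $U_t\in G(m,\operatorname{End}(\mathbb{C}^m))$ converging to $U$" is not automatic; producing such a family of diagonalizable subalgebras converging to $U$ is exactly the assertion $U\in Red(m)$ that you are trying to prove, so as stated this alternative is circular. (The paper's own examples, such as the End-closed tensor $T_{end,7}$ of border rank $>7$, show that being a nice commutative subalgebra of $\operatorname{End}(\mathbb{C}^m)$ is far from sufficient; one must really degenerate the pair consisting of the algebra together with its given module.) To salvage either route you would have to carry out the explicit limit for arbitrary partitions, as the paper does.
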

\begin{proof}
For the purposes of this proof, it will be
convenient to re-order bases such that the  $x_1$'s occur on the anti-diagonal,
and the Young
tableau occur with the American presentation:
\be\label{fourtwoone}
T(A^*)=\begin{pmatrix}
 x_2&x_3 & x_4&x_5 & &  &  & x_1 \\
 x_6& x_7& & & & &  x_1&  \\
 x_8 & &  & & & x_1& & \\
   & & & & x_1& & & \\
    & & & x_1& & & & \\
     & & x_1& & &  & &\\
     & x_1& & & & & & \\
      x_1& & & & & & &
\end{pmatrix}.
\ene

 Suppose that $T(A^*)$ is  defined by a
partition $\lambda=( k^{l_k}\hd 2^{l_2},1^{l_1})$ with $|\l|=m-1$.
We define $m$ rank $1$ matrices, parametrized by $\epsilon$ such that $T(A^*)$
equals the limit of the span of these matrices as $\epsilon\rightarrow 0$,
considered as a curve in the Grassmannian $G(m,\fgl(B))$. The   matrices will
belong to five
groups.

We label the rows and columns of our matrix by $0\hd m-1$.

1) The first group contains just one matrix with four nonzero entries:
$$
\begin{pmatrix}
1 &0&\dots&0&\epsilon^{m-1} \\
 & &\vdots  &&\\
 \epsilon^{m-1}& 0&\dots &0 &\epsilon^{2m-2}  \\
\end{pmatrix}.$$

2) The second group also contains   one matrix, with support contained in the
$  \ell(\l) \times k$ upper left  rectangle.   For $(i,j)$ in the  rectangle,
we fill the entry with $\epsilon^{i +j }$  and set all other entries to zero.
So the tensor \eqref{fourtwoone} gives
$$
\begin{pmatrix}
\ep^0 &\ep^1 &\ep^2 &\ep^3 & &  &  &  \\
 \ep ^1&\ep^2  &\ep^3 &\ep^4 & & &  &  \\
  \ep^2& \ep^3&\ep^4  &\ep^5 & & & &\\
   & & & & & & & \\
    & & & & & & & \\
     & & & & &  & &\\
     & & & & & & & \\
      & & & & & & &  \\
\end{pmatrix}.
$$

3) The third group contains $m -k-\ell(\l) $ matrices. Each matrix
corresponds to an entry in the Young diagram $\lambda$ that is neither in the
zero-th row or column.  Notice that the number of such entries equals the
number of anti-diagonal entries of the matrix  that are not in the first $k-1$
rows
or first $(\sum_{i=1}^k l_i)-1$ columns. Fix a bijection between them.    To each such entry of
the Young diagram we associate a rank one matrix with only four nonzero entries.
Suppose that the entry of the Young diagram is the $(i_0,j_0)$ entry of the
matrix  and  the corresponding entry of the anti-diagonal is $(i_1,j_1=m-1-i_1)$. The entries are
$$
a^i_j=\left\{
\begin{matrix}
\ep^{i_0+j_0} & (i,j)=(i_0,j_0)\\
\ep^{m-1} & (i,j)=(i_1,j_1) \\
 \ep^{i_1+j_0} & (i,j)=(i_1,j_0)\\
 \ep^{i_0+j_1} & (i,j)=(i_0,j_1)\\
0 & {\rm otherwise}
\end{matrix}\right\} . 
$$

The tensor \eqref{fourtwoone} has
$$
\begin{pmatrix}
  & & & & &  &  &  \\
  & \ep^2 & & &\ep^5 & &  &  \\
  & &  & & & & & \\
  &\ep^4 & & & \ep^7& & & \\
  & & &  & & &  & \\
     & & & & &  & &\\
     & & & & & & & \\
      & & & & & & &  \\
\end{pmatrix}.
$$
4) The fourth group contains $k -1$ matrices. These correspond to the
entries in the $0$-th  row  of $\lambda$ but not in the $0$-th  column. The matrix
corresponding to the entry in the $i$-th column is defined by 
$$
a^i_j=\left\{
\begin{matrix}
\ep^i & j=0\\
\ep^{i+j} & j\leq k-1{\rm\ and \ } (i,j)\not\in {\rm Young\ diagram\ of } \l\\
-\ep^{i+j} & a^{m-1-j}_j {\rm has\ been\ associated\ to \ }a^i_j\\
\ep^{m-1} & j=m-1-i\\
0 & {\rm otherwise}
\end{matrix}\right\}
$$
The tensor \eqref{fourtwoone} has
$$
 \begin{pmatrix}
  &\ep^1 & & & &  &  &  \\
 &  & & & & &  &  \\
  &\ep^3 &  & & & &  & \\
   &-\ep^4 & & & & & & \\
    & & & & & & & \\
     & & & & &  & &\\
     &\ep^7 & & & & &  & \\
      & & & & & & &  \\
\end{pmatrix}, \ \
\begin{pmatrix}
  & &\ep^2 & & &  & &  \\
 &  & \ep^3& & &  & &  \\
  & &\ep^4  & & &  & & \\
   & & & & & & & \\
    & & & & & & & \\
     & & \ep^7  & & &   & &\\
     & & & & & & & \\
      & & & & & & &  \\
\end{pmatrix},
 \ \
\begin{pmatrix}
  & & &  \ep^3& &  & &  \\
 &  & &   \ep^4& & &  &  \\
  & &  &  \ep^5& & & & \\
   & & & & & & & \\
    & & & \ep^7& & & & \\
     & & &  & &  & &\\
     & & & & & & & \\
      & & & & & & &  \\
\end{pmatrix}.
$$

5) The fifth group, consisting of $\ell(\l)-1$ matrices,  is analogous to the
fourth with entries corresponding to rows
instead of columns  and all entries, apart from the first  column, in the $  \ell(\l) \times k$ upper left  rectangle equal to zero.

The tensor \eqref{fourtwoone} has
$$
\begin{pmatrix}
  & & & & &  & &  \\
\ep^1 &  & & &-\ep^5  &  &   \ep^7& \\
  & &  & & & & & \\
   & & & & & & & \\
    & & & & & & & \\
     & & & & &  & &\\
     & & & & & & & \\
      & & & & & & &  \\
\end{pmatrix}, \ \
\begin{pmatrix}
  & & & & &  &  &  \\
 &  & & & & &  &  \\
 \ep^2&  & & & \ep^7& &  &  \\
   & & & & & & & \\
    & & & & & & & \\
     & & & & &  & &\\
     & & & & & & & \\
      & & & & & & &  \\
\end{pmatrix}.
$$
Except for the second group, each matrix in each group has a distinguished
element in the Young diagram which, after normalization, is the limit as
$\epsilon\rightarrow 0$.
Moreover, summing all matrices from groups $1,3,4,5$ and subtracting the matrix
from group $2$, the limit as $\epsilon\rightarrow 0$ is the identity matrix.
\end{proof}

\subsection{Case $m=4$}\label{brankfour}
Fix an $A$-concise, $1_A$-generic  border rank $4$ tensor $T\in A\otimes
B\otimes C$, where $A,B,C\simeq\BC^4$.
The contraction $T( A^*)$ is a $4$-dimensional subspace of matrices and we may
assume that it contains the identity.
By \cite[Prop. 18]{MR2202260} we may assume that the space $T( A^* )$ is one
of the $14$ types
of \cite[\S 3.1]{MR2202260}, corresponding to orbits of $PGL_4$.
We consider tensors up to isomorphism.

\begin{itemize}
\item One-regular algebras - centralizers of regular elements. There are $5$
types, their ranks are provided by Proposition \ref{centralizerprop}.
These are $\cO_{12},\cO_{11},\cO_{10'},\cO_{10''},\cO_{9}$ in \cite[\S
3.1]{MR2202260}.
\item Containing a $(3,1)$ Jordan type non-regular element. There are two types, giving rise to isomorphic
rank $6$ tensors. Set theoretically,   the intersection with the
Segre variety is a line and a point.
\[
T_{\cO_8'',\cO_8'}=
\begin{pmatrix}
c&0&a&0\\
0&c&b&0\\
0&0&c&0\\
0&0&0&d\\
\end{pmatrix}
\]
\item Containing a nilpotent element with Jordan block size $3$. There are three
types, all of rank $7$,
the first one representing a class to which the Coppersmith-Winograd tensor
$\tilde T_{2,CW }$ belongs, see \S\ref{cwtensors}. In the
second  case the intersection with the Segre set-theoretically is a line.
\[
T_{\cO_8}=\tilde T_{2,CW} =
\begin{pmatrix}
c&b&a&d\\
0&c&b&0\\
0&0&c&0\\
0&0&d&c\\

\end{pmatrix}
 ,
\ T_{\cO_7'',\cO_7'}=\begin{pmatrix}
c&b&a&0\\
0&c&b&0\\
0&0&c&0\\
0&0&d&c\\
\end{pmatrix}
 ,
\]
\item Four types, all of rank $7$, giving rise to three
different types of tensors. Set-theoretically the intersection with the Segre
is, in the
first case two lines intersecting in a point, in the second case a smooth
quadric, in the third   case a plane.
\[
T_{\cO_6}=
\begin{pmatrix}
c&0&a&b\\
0&c&0&d\\
0&0&c&0\\
0&0&0&c\\

\end{pmatrix}
 ,\
T_{\cO_7}=
\begin{pmatrix}
c&0&a&d\\
0&c&b&a\\
0&0&c&0\\
0&0&0&c\\

\end{pmatrix}
 ,\
T_{\cO_3',\cO_3''}=\begin{pmatrix}
c&a&b&d\\
0&c&0&0\\
0&0&c&0\\
0&0&0&c\\
\end{pmatrix}
  .
\]

\end{itemize}

\subsection{Case $m=5$}\label{brankfive} As remarked in \cite{MR2202260}, it is sufficient to consider nilpotent subspaces
as others are built out of them, so we restrict our attention to them.
Up to transpositions the following are the only maximal, nilpotent, $\tend$-abelian $5$-dimensional
subalgebras of the algebra of $5\times 5$ matrices.
   We prove that each of them
is  in  $Red(5)$. Notation is such that
$T_{N_{i,j}}$ corresponds to the nilpotent algebras $N_i,N_j$ of
\cite{MR2118458}, and we slightly abuse notation, identifying the tensor with its corresponding linear space.
\begin{align*}
&T_{N_{1,4}}=\begin{pmatrix}
a&0&0&0&0\\
b&a&0&0&0\\
c&0&a&0&0\\
d&0&0&a&0\\
e&0&0&0&a\\
\end{pmatrix}
 , \  \
T_{N_{6,8}}=\begin{pmatrix}
a&0&0&0&0\\
b&a&0&e&0\\
c&b&a&d&e\\
0&0&0&a&0\\
e&0&0&0&a\\
\end{pmatrix}
 ,
\ \
T_{N_{7,9}}=\begin{pmatrix}
a&0&0&0&0\\
b&a&0&d&0\\
c&b&a&e&d\\
0&0&0&a&0\\
0&0&0&b&a\\
\end{pmatrix}
 ,
\\
&
T_{N_{10,12}}=\begin{pmatrix}
a&0&0&0&0\\
b&a&0&0&0\\
c&b&a&0&0\\
d&0&0&a&0\\
e&0&0&0&a\\
\end{pmatrix}
 , \ \
T_{N_{11,13}}=\begin{pmatrix}
a&0&0&0&0\\
b&a&0&0&0\\
c&b&a&d&0\\
d&0&0&a&0\\
e&0&0&0&a\\
\end{pmatrix}
 , \ \
T_{N_{14}}= \tilde T_{3,CW} = \begin{pmatrix}
a&0&0&0&0\\
b&a&0&0&0\\
c&b&a&d&e\\
d&0&0&a&0\\
e&0&0&0&a\\
\end{pmatrix}
 ,\\
&
T_{N_{15}}=\begin{pmatrix}
a&0&0&0&0\\
b&a&0&0&0\\
c&b&a&0&0\\
d&c&b&a&0\\
e&0&0&0&a\\
\end{pmatrix}
 , \ \
T_{N_{16}}=\begin{pmatrix}
a&0&0&0&0\\
b&a&0&0&0\\
c&b&a&0&0\\
d&c&b&a&e\\
e&0&0&0&a\\
\end{pmatrix}
 , \ \
T_{N_{17}}=\begin{pmatrix}
a&0&0&0&0\\
b&a&0&0&0\\
c&b&a&0&0\\
d&c&b&a&e\\
0&0&0&0&a\\
\end{pmatrix}
 .
\end{align*}
$T_{N_{1,4}}$ is  obviously of border rank five.
For $T_{N_{6,8}}$,$T_{N_{15}}$,$T_{N_{16}}$, and $T_{N_{17}}$ (resp.~$T_{N_{7,9}}$) apply
Proposition \ref{prop:regular} to a pair of matrices represented by $b$ and $e$
(resp.~$b$,$d$).
    $T_{N_{10,12}}$
is the limit as $\ep\ra 0$ of the space spanned by the following five matrices:  
\[
\begin{pmatrix}
0&0&0&0&0\\
\epsilon&\epsilon^2&0&0&0\\
1&\epsilon&0&0&0\\
0&0&0&0&0\\
0&0&0&0&0\\
\end{pmatrix}
 ,\
\begin{pmatrix}
0&0&0&0&0\\
0&0&0&0&0\\
0&0&0&0&0\\
1&0&0&\epsilon^2&0\\
0&0&0&0&0\\
\end{pmatrix}
 ,\
\begin{pmatrix}
0&0&0&0&0\\
0&0&0&0&0\\
0&0&0&0&0\\
0&0&0&0&0\\
1&0&0&0&\epsilon^2\\
\end{pmatrix},
\begin{pmatrix}
0&0&0&0&0\\
0&0&0&0&0\\
1&0&0&0&0\\
0&0&0&0&0\\
0&0&0&0&0\\
\end{pmatrix}
 ,\
\begin{pmatrix}
\epsilon^2&-\epsilon^5&\epsilon^6&0&0\\
-\epsilon&\epsilon^4&-\epsilon^5&0&0\\
\epsilon^{-2}&-\epsilon&\epsilon^2&0&0\\
-1&\epsilon^3&-\epsilon^4&0&0\\
-1&\epsilon^3&-\epsilon^4&0&0\\
\end{pmatrix}
 .\]
$T_{N_{11,13}}$ is the limit of the space spanned by 
\[
\begin{pmatrix}
0&0&0&0&0\\
\epsilon&\epsilon^2&0&0&0\\
1&\epsilon&0&0&0\\
0&0&0&0&0\\
0&0&0&0&0\\
\end{pmatrix}
 ,
\begin{pmatrix}
0&0&0&0&0\\
0&0&0&0&0\\
\epsilon^{-3}&0&0&1&0\\
1&0&0&\epsilon^3&0\\
0&0&0&0&0\\
\end{pmatrix}
 ,
\begin{pmatrix}
0&0&0&0&0\\
0&0&0&0&0\\
0&0&0&0&0\\
0&0&0&0&0\\
1&0&0&-\epsilon^2&\epsilon^2\\
\end{pmatrix},
\begin{pmatrix}
0&0&0&0&0\\
0&0&0&0&0\\
1&0&0&0&0\\
0&0&0&0&0\\
0&0&0&0&0\\
\end{pmatrix}
 ,
\begin{pmatrix}
\epsilon^2&-\epsilon^5&\epsilon^6&-\epsilon^4&0\\
-\epsilon&\epsilon^4&-\epsilon^5&\epsilon^3&0\\
\epsilon^{-2}&-\epsilon&\epsilon^2&-1&0\\
-1&\epsilon^3&-\epsilon^4&\epsilon^2&0\\
-1&\epsilon^3&-\epsilon^4&\epsilon^2&0\\
\end{pmatrix}
 .\]
$T_{N_{14}}$ is isomorphic to the Coppersmith-Winograd tensor.
We determine the rank of each tensor.

The following conjecture was presented at the 2011 {\it Algebraic geometry with a view to applications} semester at the Mittag-Leffler institute:

\begin{conjecture} [J.~Rhodes] \cite[Conjecture 0]{MR3069955} 
The maximal rank of a tensor in $\BC^m\ot \BC^m\ot \BC^m$ of border rank $m$
is $2m-1$.
\end{conjecture}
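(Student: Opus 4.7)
The plan is to establish the upper bound $\bold R(T) \leq 2m-1$ for every $T \in \BC^m \ot \BC^m \ot \BC^m$ with $\ur(T)=m$; sharpness then follows from Proposition \ref{centralizerprop} applied to the centralizer of a regular nilpotent, which realizes rank exactly $2m-1$.

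First I would split by genericity. If $T$ fails to be concise in some factor, the effective ambient space is strictly smaller and the bound $2(m-1)-1 < 2m-1$ follows by induction on $m$. For concise $T$, I would split further into the $1_A$-generic and non-$1_A$-generic cases. In the $1_A$-generic case, Proposition \ref{thm:brankdiag} identifies $T$ up to isomorphism with a point $E \in Red(m)$, reducing the conjecture to a statement about $m$-dimensional abelian subspaces of $\fgl_m$ that arise as limits of maximal tori. For non-$1_A$-generic concise tensors, a separate ad-hoc analysis is needed; since every slice $T(\a)$ is singular, the substitution method of Proposition \ref{prop:slice} should apply forcefully enough to yield $\bold R(T) \leq 2m-1$ directly.

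Next, for $E \in Red(m)$ I would construct a decomposition of the associated tensor using at most $2m-1$ rank-one terms. Writing $E = \tlim_{t\to 0} E_t$ with each $E_t$ spanned by rank-one matrices $X_i(t)$ and Taylor-expanding $X_i(t) = X_i^{(0)} + tX_i^{(1)} + \cdots$, the $m$ leading terms give $m$ rank-one summands, and up to $m-1$ independent sub-leading corrections needed to recover $T(A^*)$ should account for the remaining $m-1$. The model is the specialization of the circulant $M_{\BC[\BZ_{2m-1}]}$ used in Proposition \ref{centralizerprop} for the regular nilpotent centralizer; the task is to extend that specialization trick to the whole variety $Red(m)$. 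Explicit verification would proceed via the $14$ orbits of \cite[\S 3.1]{MR2202260} listed in \S\ref{brankfour} for $m \leq 4$ and the $5$-dimensional nilpotent abelian subalgebras of \cite{MR2118458} listed in \S\ref{brankfive} for $m = 5$, with the substitution method furnishing the matching lower bounds.

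The main obstacle is producing such a uniform size-$(2m-1)$ decomposition for each $E \in Red(m)$ without access to an explicit classification for $m \geq 6$. Already for $m=5$ the $\ep$-degenerations needed in \S\ref{brankfive} are intricate and class-dependent, which indicates two possible outcomes: either a structural argument on $Red(m)$ pinpoints a standard normal form whose rank is manifestly at most $2m-1$, or the substitution method applied to the most twisted classes yields lower bounds strictly above $2m-1$, in which case the strategy locates a counterexample rather than a proof. Accordingly, the decisive step is the careful, class-by-class control over rank in the classification of nilpotent abelian $m$-dimensional subalgebras of $\fgl_m$, starting at $m=5$.
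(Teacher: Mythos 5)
The statement you are trying to prove is false for $m\geq 5$, and the paper treats it accordingly: it is recorded as a conjecture of Rhodes, verified only for $m\leq 4$ (classically for $m\leq 2$, in \cite{MR3069955} for $m=3$, and via the classification of concise $1_A$-generic border rank four tensors in \S\ref{brankfour} for $m=4$), and then refuted. Proposition \ref{prop:brank5} exhibits $T_{N_{6,8}}$ and $T_{N_{7,9}}$, which are $1_A$-generic of border rank $5$ yet have rank $10>2\cdot 5-1$; \S\ref{lgex} goes further ($T_{gap}$ has border rank $6$ and rank $12$, and Theorem \ref{biggergapthm} gives a family with rank to border rank ratio tending to $\frac 52$). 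So the uniform $(2m-1)$-term decomposition over $Red(m)$ that your second step postulates cannot exist, and your closing hedge is in fact the outcome: running the substitution method through the $m=5$ nilpotent classes of \S\ref{brankfive}, two classes return the lower bound $10$, which is a counterexample rather than a technical obstruction to be engineered away.

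Beyond the global issue, two internal steps would fail even where the statement is true. First, the Taylor-expansion heuristic ``$m$ leading terms plus at most $m-1$ sub-leading corrections'' has no justification: the correction terms $X_i^{(1)}, X_i^{(2)},\dots$ are not rank one, and the limiting space can depend on arbitrarily high-order data of the curve; this is exactly the mechanism that lets rank exceed twice the border rank. Second, the substitution method (Proposition \ref{prop:slice}) only ever produces \emph{lower} bounds on rank, so it cannot ``yield $\bold R(T)\leq 2m-1$ directly'' in the non-$1_A$-generic case; every upper bound in the paper comes from an explicit collection of rank one matrices spanning a space containing $T(A^*)$. (Also, the non-concise reduction is not an induction on the same statement: an $A$-non-concise tensor of border rank $m$ lives in $\BC^{m-1}\ot\BC^m\ot\BC^m$, not in $\BC^{m-1}\ot\BC^{m-1}\ot\BC^{m-1}$ with border rank $m-1$.) What does survive of your plan is the $m\leq 4$ part, and there it coincides with the paper's argument: reduce to concise $1_A$-generic tensors, invoke the orbit classification of \cite[\S 3.1]{MR2202260}, and compute ranks class by class using Proposition \ref{centralizerprop} and explicit decompositions, obtaining maximal rank $7=2\cdot 4-1$.
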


This conjecture was known to be true classically for $m=1,2$ and verified in \cite{MR3069955} for $m=3$.
Above we verified it for $m=4$.
The following proposition shows that the conjecture is false for $m>4$:

\begin{proposition}\label{prop:brank5}
$$\bold R(T_{N_{1,4}})=\bold R(T_{N_{10,12}})=\bold R(T_{N_{11,13}})=\bold R(T_{N_{14}})=\bold R(T_{N_{15}})=\bold R(T_{N_{16}})=\bold R(T_{N_{1,4}})=9$$
and
$$\bold R(T_{N_{6,8}})=\bold R(T_{N_{7,9}})=10.$$
\end{proposition}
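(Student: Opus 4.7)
The plan is to establish matching upper and lower bounds for each of the nine tensors, grouping them by method but ultimately treating them case by case.

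For the upper bounds, I exhibit explicit rank-one decompositions. The tensors $T_{N_{14}}, T_{N_{15}}, T_{N_{16}}, T_{N_{17}}$ each either coincide with, or are specializations of, the centralizer of a regular nilpotent $5\times 5$ Jordan block, whose rank $2m-1=9$ was obtained in Proposition \ref{centralizerprop} by specializing from $M_{\BC[\BZ_9]}$; the same specialization argument, with minor adjustments to which entries are zeroed out, gives rank at most $9$ in each of these cases. For $T_{N_{1,4}}, T_{N_{10,12}}, T_{N_{11,13}}$ the Jordan structure is different, but one writes down $9$ rank-one matrices by inspection, following the explicit construction used in the proof of Proposition \ref{prop:Leitner}: most of the matrices have just one nonzero entry placed at one of the off-diagonal nonzero positions, and a handful of matrices with multiple nonzero entries account for the $a$ on the diagonal. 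For $T_{N_{6,8}}$ and $T_{N_{7,9}}$ the same scheme yields $10$ rank-one matrices whose span contains $T(A^*)$.

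For the lower bounds, I apply the substitution method from Section \ref{aftmeth} iteratively, following the template of Example \ref{ex:aft}. Given the matrix presentation of $T(A^*)$, at each stage I pick a row or column whose corresponding slice in $B\ot C$ has rank one, apply Proposition \ref{prop:slice}, and recurse on the smaller tensor. For each of the seven rank-$9$ tensors one constructs a substitution sequence of length $9$ that terminates in the zero tensor; for $T_{N_{6,8}}$ and $T_{N_{7,9}}$ a substitution sequence of length $10$. The particular sequence depends on the tensor: for $T_{N_{15}}, T_{N_{16}}, T_{N_{17}}$ the pattern is essentially that of Example \ref{ex:aft} with the reduction sequence $4+2+2+1=9$, while for others it is necessary to interchange the roles of $A$, $B$, $C$ between substitution steps, as described at the end of Section \ref{aftmeth}.

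The main obstacle is the lower bound of $10$ for $T_{N_{6,8}}$ and $T_{N_{7,9}}$, since this refutes Rhodes' conjecture and so no substitution sequence of length $9$ can suffice. One must exhibit a specific sequence of length $10$ and verify that at each step a rank-one slice remains available after the previous reductions have modified the residual tensor. The ``extra'' placements of $e$ (at positions $(2,4), (3,5), (5,1)$) in $T_{N_{6,8}}$ and of $b$ (at positions $(2,1), (3,2), (5,4)$) in $T_{N_{7,9}}$ are precisely what enables such sequences to proceed one step further than the corresponding sequences for the rank-$9$ algebras; in both cases, the case-by-case verification that no better decomposition is possible reduces to checking that the residual tensor after any $9$-step sequence is still nonzero, which can be done by a direct inspection of which monomials must survive.
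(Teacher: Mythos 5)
Your overall blueprint — substitution method for lower bounds, explicit rank-one decompositions for upper bounds — matches the paper's, but the sketch contains two substantive inaccuracies and one genuine gap.

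First, your upper bound strategy is incorrect for several tensors. You claim $T_{N_{14}}, T_{N_{15}}, T_{N_{16}}, T_{N_{17}}$ all "coincide with, or are specializations of, the centralizer of a regular nilpotent $5\times 5$ Jordan block." But $T_{N_{14}} = \tilde T_{3,CW}$ is the Coppersmith--Winograd tensor; its linear space has the entry $d$ at $(3,4)$ and $e$ at $(3,5)$, which the regular nilpotent centralizer simply does not have, so there is no rank-decreasing degeneration from the centralizer to it. The paper instead invokes Proposition \ref{prop:CWrank} directly, giving $\bold R(\tilde T_{3,CW}) = 2\cdot 3 + 3 = 9$. The same issue arises for $T_{N_{16}}$, which has $e$ in position $(4,5)$. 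Likewise, your description of the $T_{N_{7,9}}$ upper bound as "the same scheme" misses the essential trick: after removing six rank-one matrices, one is left with a $3\times 3$ tensor isomorphic to the symmetric tensor of the monomial $xyz$, and one invokes its Waring rank $4$; this is qualitatively different from anything in the $T_{Leit,5}$ construction.

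Second, and more importantly, you do not actually establish the lower bound $\bold R \geq 10$ for $T_{N_{6,8}}$ and $T_{N_{7,9}}$, which is the content that makes this proposition a counterexample to Rhodes' conjecture. You observe correctly that a substitution sequence of length $10$ is needed, but then hand off the verification ("one must exhibit a specific sequence... and verify...") without doing it. This is where the real work is. The issue is that Proposition \ref{prop:slice} introduces unknown constants $\lambda_j$ at each step, and after several steps the residual tensor has entries like $c + \alpha e + \delta b$; one must show these constants can be eliminated by a further change of coordinates (so that the residual space is \emph{equal} to a clean one) and then that the clean residual, a $3\times 2$ or similar block, corresponds to a tensor of rank at least $5$ — e.g., for $T_{N_{6,8}}$ after removing two rows, two columns and one more column, the residual space is
$\left(\begin{smallmatrix} b&e\\c&d\\e&0\end{smallmatrix}\right)$,
and showing this has rank $\geq 5$ takes another round of substitution. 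Your narrative about "extra placements of $e$" and "which monomials must survive" does not carry the weight of this argument. As written, your proposal establishes at best the trivial lower bound of $9$ for these two tensors.
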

\begin{proof}
The fact that the rank of any tensor is at least $9$ follows by
the substitution method. To see
that $\bold R(T_{N_{6,8}})\geq 10$, first apply Proposition \ref{prop:slice} to the first and fourth row 
and to the third and fifth column. This shows that the rank is at least $4$ plus the rank of the tensor associated to 
\[\begin{pmatrix}
b&0&e\\
c+\alpha e&b+\beta e&d+\gamma e\\
e&0\\
\end{pmatrix},
\]
where $\alpha,\beta,\gamma$ are some constants. Now apply the proposition to the second column obtaining a tensor represented by 
\[\begin{pmatrix}
b&e\\
c+\alpha e+\delta b&d+\gamma e+\rho b\\
e&0\\
\end{pmatrix},
\]
where $\delta,\rho,\alpha,\gamma$ are (possibly new) constants. This space is 
 equal  to 
\[\begin{pmatrix}
b&e\\
c&d\\
e&0\\
\end{pmatrix}
\]
and it remains to show that it  corresponds to a tensor of  rank at least $5$. This follows by Proposition \ref{prop:slice} by first reducing $b,c,d$ and obtaining a rank $2$ matrix.

To prove that $R(T_{N_{7,9}})\geq 10$, apply Proposition \ref{prop:slice} and 
remove the second, third and fifth column and first and fourth row to obtain a tensor isomorphic to 
\[
\begin{pmatrix}
b&d\\
c&e\\
0&b\\
\end{pmatrix} , 
\]
and conclude as above.

The upper bounds for ranks of $T_{N_{1,4}}$, $T_{N_{10,12}}$, $T_{N_{15}}$ and $T_{N_{17}}$ follow  from Proposition \ref{centralizerprop}.

For $T_{N_{6,8}}$,  consider:
\begin{enumerate}
\item $5$ matrices, including the matrix corresponding to $c$, which follow from Proposition \ref{centralizerprop} for the upper left $3\times 3$ corner,
\item $2$ matrices for last $2$ diagonal entries,
\item $1$ matrix corresponding to $d$,
\item the $2$ matrices:
\[\begin{pmatrix}
0&0&0&0&0\\
0&0&0&1&0\\
0&0&0&0&0\\
0&0&0&0&0\\
0&0&0&0&0\\
\end{pmatrix},
\begin{pmatrix}
0&0&0&0&0\\
0&0&0&0&0\\
1&0&0&0&1\\
0&0&0&0&0\\
1&0&0&0&1\\
\end{pmatrix}.
\]
\end{enumerate}
 
For $T_{N_{11,13}}$ it is enough to notice that once a matrix for $c$ and the fourth diagonal entry are given, one can generate the matrix corresponding to $d$ using 
\[
\begin{pmatrix}
0&0&0&0&0\\
0&0&0&0&0\\
1&0&0&1&0\\
1&0&0&1&0\\
0&0&0&0&0\\
\end{pmatrix}.
\]
An analogous method shows $\bold R(T_{N_{16}})=9$. The tensor $T_{N_{14}}$ is a special case of Proposition \ref{prop:CWrank}.

For $T_{N_{7,9}}$, consider $3$ rank one matrices corresponding to entries of the first column, $2$ rank one matrices corresponding to the  third and fourth diagonal entry and one matrix corresponding to $e$. Apart from these six matrices we are left with the tensor represented by 
\[
\begin{pmatrix}
a&d&0\\
b&0&d\\
0&b&a\\
\end{pmatrix}.
\]
This tensor is isomorphic to the symmetric tensor given by 
the monomial $xyz$ which has  Waring rank $4$, see, e.g., \cite{MR2628829} (the upper bound dates
back at least to \cite{MR1573008}),  and thus tensor rank 
at most $4$.
\end{proof}

Apart from the nilpotent algebras just discussed there are two families of End-closed $5$-dimensional
subalgebras.
\begin{enumerate}
\item The subspace spanned by identity and any $4$-dimensional subspace of the $6$-dimensional algebra 
\[
\begin{pmatrix}
0&0&0&0&0\\
0&0&0&0&0\\
0&0&0&0&0\\
a&b&c&0&0\\
d&e&f&0&0\\
\end{pmatrix}
 .\]

In this case there are  normal forms: Tensors in
$A'\ot B'\ot C'=\BC^4\ot \BC^2\ot \BC^3 $ are classified
in   \cite{MR2996361}.
We have
$
T=a_1(b_1c_1+..+b_5c_5)
+ a_2b_1c_4+a_3b_1c_5
+ a_{ 4 }b_2c_4+a_{ 5 }b_2c_5
+ a_6b_3c_4+a_7b_3c_5= a_1(b_1c_1+..+b_5c_5)+T'
$
where $a_2,..a_7$ satisfy two linear relations.
If we make a change of basis in $c_4,c_5$,
say by a $2\times 2$ matrix $X$, then
as long as we change $b_4,b_5$ by $X\inv$
the first term does not  change.
  Similarly, if we make a change of basis in $b_1,b_2,b_3$,
by a matrix $Y$, then as long as we change
$c_1,c_2,c_3$ by $Y\inv$, the first term does not  change.
In our case we
may assume the tensors are $A$-concise. There are the following
cases (numbers as in \cite{MR2996361}),
 we abuse notation, writing $T$ for $T(A^*)$:  
 \begin{align*}
T_9 &=\begin{pmatrix}
x_1 & & & & \\
& x_1 & & & \\
& & x_1 & & \\
x_2&x_3 & & x_1 & \\
x_4&x_5 & & & x_1
\end{pmatrix}, \ \
T_{19}  =\begin{pmatrix}
x_1 & & & & \\
& x_1 & & & \\
& & x_1 & & \\
x_2&x_3 &x_5 & x_1 & \\
x_3&x_4 & & & x_1
\end{pmatrix} 
T_{20}  =\begin{pmatrix}
x_1 & & & & \\
& x_1 & & & \\
& & x_1 & & \\
x_2&x_4 &x_5 & x_1 & \\
x_3& & & & x_1
\end{pmatrix}, \\
T_{21}  &=\begin{pmatrix}
x_1 & & & & \\
& x_1 & & & \\
& & x_1 & & \\
x_2& x_4&x_5 & x_1 & \\
x_3&x_5 &  & & x_1
\end{pmatrix},
T_{22}  =\begin{pmatrix}
x_1 & & & & \\
& x_1 & & & \\
& & x_1 & & \\
x_2&x_4 & & x_1 & \\
x_3& &x_5 & & x_1
\end{pmatrix}, \ \
T_{23}  =\begin{pmatrix}
x_1 & & & & \\
& x_1 & & & \\
& & x_1 & & \\
x_2&x_3 &x_4 & x_1 & \\
x_3&x_4 &x_5 & & x_1
\end{pmatrix}.
\end{align*}
Now $\ur(T_9)=\ur(T_{20})=5$ because they are special cases of flag-algebra
tensors  - cf.~Proposition \ref{prop:Young}.

$T_{22} (A^*) $ is the limit of the space spanned by the following $5$ matrices:
\[\begin{pmatrix}
\epsilon^2 & &-\epsilon^3 & & \epsilon^4\\
& & & & \\
& & & & \\
& & & & \\
1& &-\epsilon & & \epsilon^2
\end{pmatrix},
\begin{pmatrix}
 & & & & \\
& & & & \\
& & & & \\
1&-\epsilon & & \epsilon^2 & \\
& & & &
\end{pmatrix},
\begin{pmatrix}
 & & & & \\
& \epsilon^2 & & & \\
& & & & \\
&\epsilon & & & \\
& & & &
\end{pmatrix},
\begin{pmatrix}
& & & & \\
& & & & \\
& & \epsilon^2 & & \\
& & & & \\
& &\epsilon & &
\end{pmatrix},
\begin{pmatrix}
 & & & & \\
&  & & & \\
& &  & & \\
1& & &  & \\
1& & & &
\end{pmatrix}.\]

$T_{23} (A^*) $ is the limit of the space spanned by  the following $5$ matrices:
\[\begin{pmatrix}
& & & & \\
 & & & & \\
& -2\epsilon^6& \epsilon^4& &\epsilon^8 \\
& & & & \\
 & -2\epsilon^2 &1 & &\epsilon^4
\end{pmatrix},
\begin{pmatrix}
 \epsilon^4& & &-\frac{1}{2}\epsilon^8 & \\
-\epsilon^3& & &\frac{1}{2}\epsilon^7 & \\
& & & & \\
-2& & & \epsilon^4 & \\
& & & &
\end{pmatrix},
\begin{pmatrix}
 & & & & \\
\epsilon^3&\epsilon^4  &2\epsilon^5 & & \\
& & & & \\
1&\epsilon & 2\epsilon^2& & \\
\epsilon& \epsilon^2& 2\epsilon^3& &
\end{pmatrix},
\begin{pmatrix}
& & & & \\
& & & & \\
& & & & \\
& &2\epsilon^2 & & \\
& &1+2\epsilon^3 & &
\end{pmatrix},
\begin{pmatrix}
 & & & & \\
&  & & & \\
& &  & & \\
1&-\epsilon & &  & \\
-\epsilon&\epsilon^{2} & & &
\end{pmatrix}.\]

$T_{21} (A^*) $ is the limit of the space spanned by  the following $5$ matrices:
\[\begin{pmatrix}
& & & & \\
&& & & \\
&\epsilon^3 &\epsilon^4 & & \\
&\epsilon & \epsilon^2& & \\
& \epsilon^2 &\epsilon^3 & &
\end{pmatrix},
\begin{pmatrix}
 & & & & \\
&-\epsilon^4 & & & \\
&\epsilon^3 & & & \\
& \epsilon& &  & \\
& & & &
\end{pmatrix},
\begin{pmatrix}
 \epsilon^4& &-\epsilon^6 &\epsilon^8& \\
&  & & & \\
& &  & & \\
1& & -\epsilon^2&\epsilon^4&\\
& & & &
\end{pmatrix},
\begin{pmatrix}
& & & & \\
& & & & \\
& & & & \\
& & & &\\
\epsilon&-\epsilon^2 &-\epsilon^3 & &\epsilon^4
\end{pmatrix},
\begin{pmatrix}
 & & & & \\
&  & & & \\
& &  & & \\
1& & & &\\
\epsilon& & &  &
\end{pmatrix}.\]

$T_{19} (A^*) $ is the limit of the space spanned by  the following $5$ matrices:
\[\begin{pmatrix}
\epsilon^2 & &-\epsilon^3 &\epsilon^4 & \\
 &  & & \\
& & & & \\
1& & -\epsilon &\epsilon^2 & \\
 & & & &
\end{pmatrix},
\begin{pmatrix}
 & & & & \\
& & & & \\
& & \epsilon^2& & \\
& &\epsilon & & \\
& & & &
\end{pmatrix},
\begin{pmatrix}
 & & & & \\
&\epsilon^2  & & &\epsilon^4 \\
& &  & &\ \\
& & & & \\
& 1& & &\epsilon^2
\end{pmatrix},
\begin{pmatrix}
& & & & \\
& & & & \\
& &  & & \\
1& 1& & & \\
1& 1& & &
\end{pmatrix},
\begin{pmatrix}
 & & & & \\
&  & & & \\
& &  & & \\
-1& 1& &  & \\
1& -1& & &
\end{pmatrix}.\]

\begin{proposition} $\bold R(T_{21})=10$
and all other tensors on this list have  $\bold R(T_j)=9$.
\end{proposition}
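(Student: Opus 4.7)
The plan is to reduce all six cases to a common computation in $\BC^4\otimes\BC^2\otimes\BC^3$ via a uniform decomposition. For each $j$ on the list observe that
$$
T_j \;=\; a_1\otimes\sum_{i=1}^{5} b_i\otimes c_i \;+\; T_j',
$$
where $T_j' \in \tspan\{a_2,\ldots,a_5\}\otimes\tspan\{b_4,b_5\}\otimes\tspan\{c_1,c_2,c_3\}$ collects the entries in the lower-left block of $T_j(A^*)$. By subadditivity of rank, $\bold R(T_j)\leq 5 + \bold R(T_j')$, since $a_1\otimes\sum_{i=1}^{5} b_i\otimes c_i$ is an identity-like rank-$5$ tensor.

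For the matching lower bound I plan to apply Proposition \ref{prop:slice} (the substitution method) iteratively. The five slices $T_j(b_i^*) = a_1\otimes c_i$ for $i=1,2,3$ and $T_j(c_k^*) = a_1\otimes b_k$ for $k=4,5$ are each genuinely rank one, and their supports (entries in rows $1,2,3$ and columns $4,5$ of $T_j(A^*)$) are disjoint from the support of $T_j'$. Hence each of these slices can be eliminated in turn, each step dropping the rank by exactly one, and the residual tensor after all five eliminations equals $T_j'$ itself. This yields $\bold R(T_j)\geq 5 + \bold R(T_j')$.

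It remains to compute $\bold R(T_j')$ for each $j$. Viewing $T_j'$ as a matrix pencil parameterised by $\tspan\{b_4,b_5\}\simeq\BC^2$ (with rows indexed by $c_1,c_2,c_3$ and columns by $a_2,\ldots,a_5$), its tensor rank is the sum of the ranks of its Kronecker blocks (see \cite[\S 10.3]{MR2865915}), where an $L_\varepsilon$ block contributes $\varepsilon+1$, a $J_1$ block contributes $1$, and a $J_k$ block with $k\geq 2$ contributes $k+1$. Reading the Kronecker normal form directly from each $3\times 4$ pencil matrix: $T_9'$ and $T_{22}'$ split into two disjoint $L_1$ blocks (rank $2+2=4$); $T_{19}'$ splits as $L_2\oplus J_1$ (rank $3+1=4$); $T_{20}'$ as $L_1\oplus J_1\oplus J_1$ (rank $2+1+1=4$); $T_{23}'$ is a single $L_3$ block (rank $4$); while $T_{21}'$ contains a nilpotent Jordan block $J_2$ together with an $L_1$ (rank $3+2=5$). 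Combining with the bounds above yields $\bold R(T_j)=9$ for $j\neq 21$ and $\bold R(T_{21})=10$.

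The main obstacle will be the exceptional behaviour of $T_{21}'$: because $x_5$ appears at both $(4,3)$ and $(5,2)$ in $T_{21}(A^*)$, the $a_5$-slice has rank two, producing a nilpotent Jordan $J_2$ block, which as a tensor has rank $3$ rather than the $2$ one would obtain from two $J_1$ blocks at distinct eigenvalues. This single extra unit of rank is precisely the source of $T_{21}$'s higher tensor rank, and all other $T_j'$ have Kronecker structures whose block-rank sums land at the floor value $4$. The remaining bookkeeping is routine: verifying the Kronecker decompositions by inspection of the explicit $3\times 4$ pencils, and checking that the substitution parameters $\lambda_j$ can be taken to be zero so that the residuals match $T_j'$ on the nose.
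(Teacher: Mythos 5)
Your proposal is correct and follows essentially the same route as the paper: reduce via the substitution method to the rank of the residual tensor $T_j'\in\BC^4\ot\BC^2\ot\BC^3$ in the lower-left block, then use the classification of pencils (the paper just cites \cite{MR2996361} for the ranks, whereas you compute the Kronecker normal forms yourself). The global strategy, the upper bound by subadditivity, and the lower bound via the substitution procedure are all the same; the ranks $4,4,4,5,4,4$ for $T_9',T_{19}',T_{20}',T_{21}',T_{22}',T_{23}'$ come out right.

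Two small points to tidy up. First, your Kronecker labeling is slightly off in one place: the pencil of $T_{22}'$, namely
$$sP+tQ=\begin{pmatrix}s&0&0\\t&0&0\\0&s&0\\0&0&t\end{pmatrix},$$
is $L_1^{T}\oplus J_1(\lambda_1)\oplus J_1(\lambda_2)$ with distinct eigenvalues, not two $L_1$ blocks (two $L_1^T$ blocks would give a $4\times 2$ pencil, not $4\times 3$); the rank is still $2+1+1=4$, so the conclusion is unchanged. (Likewise $T_{20}'$ is $L_1^T\oplus J_1\oplus J_1$ with a repeated eigenvalue, again rank $4$.) Second, the phrase ``the substitution parameters $\lambda_j$ can be taken to be zero'' is not quite the logic of Proposition \ref{prop:slice}: one does not get to choose $\lambda$ in the lower-bound direction. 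What saves the day is exactly the observation built into the stated procedure (step 4): after eliminating rows $1,2,3$ and columns $4,5$ and \emph{setting $x_1=0$}, the residual $M'$ is independent of $\lambda$, and setting $x_1=0$ is a rank-nonincreasing restriction; this is what yields $\bold R(T_j)\geq 5+\bold R(T_j')$. Neither point affects the correctness of your argument.
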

\begin{proof} This follows   by the substitution-method and considering
the ranks of $T'$ in \cite{MR2996361}.
\end{proof}

\item The subspace spanned by the identity and any $4$-dimensional subspace of the
 $5$-dimensional algebra 
 \be\label{fourdcase}
\begin{pmatrix}
0&0&0&0&0\\
a&0&0&0&0\\
b&a&0&e&0\\
0&0&0&0&0\\
d&0&0&c&0\\
\end{pmatrix}
 .
\ene
\end{enumerate}
 All the operations we will perform preserve the identity matrix.

First, by exchanging rows and  columns the algebra is 
$$\begin{pmatrix}
0&0&0&0&0\\
0&0&0&0&0\\
a&0&0&0&0\\
d&c&0&0&0\\
b&e&a&0&0\\
\end{pmatrix}
 .
 $$

Note that the tensor $T_{Leit,5}$ of  Proposition \ref{prop:Leitner} is in this
family (set $b=0$).

\begin{proposition}\label{fourdprop}  All End-closed  tensors in $\BC^5\ot \BC^5\ot \BC^5$
obtained from $4$-dimensional subspaces of \eqref{fourdcase} have
border rank  five.
\end{proposition}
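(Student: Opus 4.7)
The plan is first to characterize the End-closed $4$-dimensional subspaces $U$ of the $5$-dimensional algebra in \eqref{fourdcase}, and then to produce a border rank $5$ degeneration of the associated tensor for each. Label the matrix generators $A, B, C, D, E$ corresponding to the variables $a, b, c, d, e$. A direct matrix computation shows that the only nonvanishing product among these generators is $A \cdot A = B$; every one of $AC, AD, AE, CD, CE, DE, C^2, D^2, E^2$ and every product involving $B$ vanishes. Hence, writing $\mathcal{F} := U + \BC I$, one has $\mathcal{F}\cdot\mathcal{F}\subset U + \BC I + \BC B$, so the End-closed condition $\mathcal{F}\cdot\mathcal{F}\subset\mathcal{F}$ is equivalent to the single requirement $B \in U$.

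Second, I would split the End-closed subspaces into two families: (a) $A \in U$ (so $A, B \in U$), in which case $U = \tspan\{A, B\} \oplus W$ for a $2$-dimensional subspace $W \subset \tspan\{C, D, E\}$; and (b) $A \notin U$ but $B \in U$, in which case, after normalizing the $A$-coefficient to $1$ and using that the $B$-coefficient vanishes, $U$ is cut out by a linear form $\phi = a + \gamma c + \delta d + \nu e$ with $(\gamma, \delta, \nu)$ a free parameter. Then, using the normalizer of the algebra inside $GL_5$ --- which includes the evident $GL_3$-action on $\tspan\{C, D, E\}$ and the shifts $A \mapsto A + \mu_1 C + \mu_2 D + \mu_3 E$ that preserve $A^2 = B$ --- I would reduce each family to a short list of canonical representatives, distinguished by the Jordan type of a generic nilpotent element of $U$.

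For each representative, the goal is to write down five explicit $t$-dependent rank-one matrices whose span in $G(5, B \otimes C)$ converges to $\mathcal{F}$ as $t \to 0$, in the style of the constructions already given in this section for $T_{N_{10,12}}$ and $T_{N_{11,13}}$. Proposition \ref{prop:regular} will not shortcut this: every element of the algebra in \eqref{fourdcase} has rank at most three in $\fgl_5$, hence centralizer of dimension at least seven, and in case (a) the full common centralizer $\bold C(\mathcal{F})$ in $\fgl_5$ strictly contains $\mathcal{F}$ (it picks up the one generator of $\tspan\{C,D,E\}$ not lying in $W$), so neither the regular-element nor the two-element common-centralizer version of the criterion applies.

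The main obstacle will therefore be to exhibit the degenerations uniformly across each parameter family. A more conceptual route I would attempt first is to exploit smoothability: $\mathcal{F}$ is a local Artinian commutative algebra of length $5$, hence abstractly smoothable, and the question reduces to whether the $\mathcal{F}$-module structure on $B \cong \BC^5$ coming from the embedding deforms flatly together with the algebra. If the relevant obstructions vanish (very plausible at length $5$), one obtains a curve of simultaneously diagonalizable $5$-dimensional subspaces of $\fgl_5$ limiting to $\mathcal{F}$, and then $\ur(T) = 5$ follows by Proposition \ref{thm:brankdiag}.
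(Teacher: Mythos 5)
Your opening analysis is correct and matches the paper's setup: the only nonvanishing product among the generators of the algebra \eqref{fourdcase} is $A^2 = B$, so the End-closed condition on $\mathcal F = U + \BC I$ is equivalent to $B \in U$ (the degenerate case where every element of $U$ has $a = 0$ forces $U = \tspan\{B,C,D,N_e\}$, which trivially contains $B$; the paper handles this sub-case by Proposition~\ref{prop:Young}). Your observation that Proposition~\ref{prop:regular} gives no shortcut is also right: $\bold C(\mathcal F)$ is $6$-dimensional in case (a) (the missing generator of $\tspan\{C,D,N_e\}$ re-enters the common centralizer), so neither the regular nor the two-element criterion applies. Your plan of reducing to normal forms via the normalizer, then exhibiting explicit rank-one curves, is exactly the strategy the paper executes: the case analysis in the paper reduces (by row/column operations implementing conjugations in $GL_5$) to the four explicit spaces $S_1,\dots,S_4$ of Lemma~\ref{lem:brank5}, for which explicit degenerations are then given.

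The gap is that you never actually produce the degenerations, and the fallback ``conceptual route'' you offer first does not close the gap. Saying ``$\mathcal F$ is a local Artinian algebra of length $5$, hence smoothable'' does not help because smoothability of the \emph{abstract algebra} is not what is at issue: as you correctly observe a sentence later, the real question is whether the \emph{pair} (algebra, faithful $5$-dimensional module) smooths, i.e.\ whether $\mathcal F \in Red(5)$ --- which is exactly the statement to be proved. The module here is not cyclic (check: for $U = \tspan\{A,B,C,D\}$, $Cv$ and $Dv$ are both multiples of $e_5$ for every $v$, so $\mathcal F v$ never spans $\BC^5$), so you cannot transport an algebra smoothing to a module smoothing via the regular representation, and ``very plausible at length $5$'' is not an argument. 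The delicacy is real: the paper shows $End\text{-}Abel_A = Diag_A^g$ for $m=5$ by precisely this case-by-case verification, and exhibits $\tend$-closed examples already at $m=7$ ($T_{end,7}$) that are \emph{not} of minimal border rank, so no general smoothability theorem of the kind you are hoping for can be invoked. To complete the proof you need to carry out your second, explicit plan: reduce each End-closed $U$ to a normal form (being careful that the ``shifts'' $A \mapsto A + \mu_1 C + \mu_2 D + \mu_3 N_e$ you invoke are actually realized by conjugation --- this requires checking, and the paper instead uses explicit row/column operations) and then write down five $\epsilon$-families of rank-one matrices limiting to each normal form, as in the paper's Lemma~\ref{lem:brank5}.
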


To prove the proposition, we will use the following lemma:

\begin{lemma}\label{lem:brank5}
Each of the  tensors in $\BC^5\ot \BC^5\ot \BC^5$ corresponding to linear spaces spanned by the identity
and the following subspaces have border rank $5$:
 
$$S_1=\begin{pmatrix}
0&0&0&0&0\\
0&0&0&0&0\\
a&0&0&0&0\\
d&0&0&0&0\\
b&e&a&0&0\\
\end{pmatrix}
, S_2=
 \begin{pmatrix}
0&0&0&0&0\\
0&0&0&0&0\\
a&0&0&0&0\\
d&a&0&0&0\\
b&e&a&0&0\\
\end{pmatrix}
, S_3=\begin{pmatrix}
0&0&0&0&0\\
0&0&0&0&0\\
a&0&0&0&0\\
d&e&0&0&0\\
b&0&a&0&0\\
\end{pmatrix}
 , S_4= \begin{pmatrix}
0&0&0&0&0\\
0&0&0&0&0\\
a&0&0&0&0\\
d&e&0&0&0\\
b&d&a&0&0\\
\end{pmatrix} 
 .
$$

The tensors corresponding to $S_1,S_3,S_4$,   have rank $9$ and the tensor corresponding to $S_2$ has rank $10$.
\end{lemma}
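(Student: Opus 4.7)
My plan is to handle each of the four cases in parallel along the same template. For each $i$, let $U_i := \BC\cdot I + S_i \subset \fgl_5$; this is a concise $5$-dimensional subspace, so $\ur(T_{S_i}) \ge 5$ is automatic. The rank lower bounds of $9$ or $10$ will be proved by the substitution method (Proposition \ref{prop:slice}) applied iteratively, and the rank upper bounds by explicit rank-one decompositions in the spirit of Proposition \ref{prop:brank5}.

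For the upper bound $\ur(T_{S_i}) \le 5$ the strategy is to show $U_i \in Red(5)$ and apply Proposition \ref{thm:brankdiag}. For $S_2, S_3, S_4$, setting the appropriate variables equal to $1$ produces an element of Jordan type $(3,2)$; its centralizer in $\fgl_5$ has dimension $9$, and I would find a second element of $S_i$ whose centralizer intersects this in exactly $U_i$, so that Proposition \ref{prop:regular}(2) applies. The subspace $S_1$ is more delicate: its elements have Jordan type at most $(3,1,1)$, so no Jordan-$(3,2)$ centralizer argument is available. For $S_1$ I would instead write down five explicit rank-one matrices depending on $\epsilon$ whose span in $G(5, \fgl_5)$ converges to $T_{S_1}(A^*)$ as $\epsilon \to 0$, following the pattern used for $T_{N_{10,12}}$ in \S\ref{brankfive} and the flag-algebra cases; the necessary degeneration is a mild modification of the one associated to the partition $(3,1)$.

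The rank computations use Proposition \ref{prop:slice} iteratively. For each $S_i$ the substitution method contracts the rows and columns containing the diagonal $c$'s together with the auxiliary variables $b$ and $e$ (each of which appears in only one position), reducing to a smaller tensor whose rank is controlled directly; iterating this gives $\bold R \ge 9$ in all four cases. For $S_2$, the extra entry at position $(4,2)$ --- equal to $a$, the same variable controlling positions $(3,1)$ and $(5,3)$ --- produces an additional coupling that survives the reductions, and the residual tensor is essentially the rank-$4$ symmetric tensor associated to the monomial $xyz$ (as in the treatment of $T_{N_{7,9}}$ in Proposition \ref{prop:brank5}), forcing $\bold R(T_{S_2}) \ge 10$. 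For the upper bounds I would exhibit $9$ rank-ones for $S_1, S_3, S_4$ (built as in the $T_{N_{11,13}}$ and $T_{N_{16}}$ constructions: a rank-$4$ or rank-$5$ ``core'' covering the leftmost column plus diagonal/edge contributions) and $10$ rank-ones for $S_2$, adapting the $T_{N_{7,9}}$ decomposition.

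The main obstacle is the lower bound $\bold R(T_{S_2}) \ge 10$, distinguishing $S_2$ from its three near-siblings of rank $9$. The substitution method is sensitive to the order in which rows and columns are contracted, and ruling out any shortcut to $9$ requires showing that after every natural reduction the residual tensor is at least as complex as the symmetric $xyz$-tensor. I expect to handle this by verifying that the extra $a$-entry in position $(4,2)$ cannot be absorbed by any choice of substitution constants: once $b$ and $e$ have been contracted, the coupling between columns $1$ and $2$ induced by the shared variable $a$ persists and produces the required residual complexity.
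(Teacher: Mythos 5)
The crux of your plan for $S_2$, $S_3$, $S_4$ --- using Proposition \ref{prop:regular}(2) by finding two elements whose common centralizer is exactly $U_i := \BC I + S_i$ --- cannot work. For each of these (and for $S_1$ as well) the common centralizer of \emph{all} of $S_i$ is $6$-dimensional, so \emph{a fortiori} the common centralizer of any two elements strictly contains $U_i$ and the hypothesis of Proposition \ref{prop:regular}(2) can never be satisfied. Concretely: the elementary matrix $E_{42}$ commutes with every element of $S_2$ (check directly against $E_{31}+E_{42}+E_{53}$, $E_{41}$, $E_{51}$, $E_{52}$) but lies outside $U_2$, since in $S_2$ the $(4,2)$ entry is forced to equal the $(3,1)$ and $(5,3)$ entries. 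Similarly $E_{52}$ is an extra commuting element for $S_3$, and $E_{41}-E_{52}$ for $S_4$. You correctly noticed that the Jordan-type ceiling $(3,2)$ of elements in these spaces gives centralizers of dimension $9$, but intersecting two such $9$-dimensional centralizers never gets below $6$ here. Indeed this is why the paper proves all four cases by exhibiting explicit $\epsilon$-parametrized rank-one degenerations, rather than invoking any centralizer criterion.

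You also have the difficulty gradient backwards. In the paper, $S_1$, $S_3$, $S_4$ are handled by relatively tame degenerations, while $S_2$ is the hardest case: its degeneration requires auxiliary scalars $\alpha$, $\beta$ satisfying $\alpha^2+\alpha-1=0$ and $\beta=(\alpha^2+1)/(1-\alpha)$ to make the cancellations work, reflecting the same structural obstruction (the shared variable $a$ at positions $(3,1)$, $(4,2)$, $(5,3)$) that drives the rank of $T_{S_2}$ up to $10$. Your plan for $S_1$ (explicit degeneration modeled on the $T_{N_{10,12}}$ and flag-algebra constructions) is the right idea, but you will need the analogous explicit construction for $S_2$, $S_3$, $S_4$ as well; there is no shortcut here. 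The rank estimates via the substitution method (lower bounds) and explicit rank-one spans (upper bounds) match the paper's outline; the claim that the $S_2$ residual tensor reduces to the $xyz$-monomial tensor should be spelled out, as the paper only asserts the lower bound without detail.
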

\begin{proof}
We first prove the statement about the border rank. 
The span of $S_1$ and the identity  is the limit of the space spanned by 
\[\begin{pmatrix}
0&0&0&0&0\\
0&0&0&0&0\\
\epsilon&0&\frac{1}{2}\epsilon^2&0&0\\
0&0&0&0&0\\
2&0&\epsilon&0&0\\
\end{pmatrix}
 ,
 \begin{pmatrix}
\epsilon^2&0&-\frac{1}{2}\epsilon^3&0&\frac{1}{2}\epsilon^4\\
0&0&0&0&0\\
-\epsilon&0&\frac{1}{2}\epsilon^2&0&-\frac{1}{2}\epsilon^3\\
0&0&0&0&0\\
2&0&-\epsilon&0&\epsilon^2\\
\end{pmatrix}
 ,
\begin{pmatrix}
0&0&0&0&0\\
0&0&0&0&0\\
0&0&0&0&0\\
\epsilon&\frac{\epsilon^2}{4}&0&\epsilon^2&0\\
0&0&0&0&0\\
\end{pmatrix}
 ,
\]
\[
\begin{pmatrix}
0&0&0&0&0\\
0&\epsilon^2&0&0&0\\
0&0&0&0&0\\
0&0&0&0&0\\
0&\epsilon&0&0&0\\
\end{pmatrix},
\begin{pmatrix}
0&0&0&0&0\\
0&0&0&0&0\\
0&0&0&0&0\\
\epsilon&\frac{\epsilon^2}{4}&0&0&0\\
4&\epsilon&0&0&0\\
\end{pmatrix}.
\]
For $S_2$, consider
\begin{align*}X_1&= \begin{pmatrix}
\epsilon^4&0&\epsilon^6&0&\epsilon^8\\
0&0&0&0&0\\
\epsilon^2&0&\epsilon^4&0&\epsilon^6\\
0&0&0&0&0\\
1&0&\epsilon^2&0&\epsilon^4\\
\end{pmatrix},\
X_2
=\begin{pmatrix}
0&0&0&0&0\\
\alpha\beta\epsilon^3&\beta\epsilon^4&0&\alpha\beta^2\epsilon^6&0\\
0&0&0&0&0\\
\epsilon&\alpha^{-1}\epsilon^2&0&\beta\epsilon^4&0\\
\alpha&\epsilon&0&\alpha\beta\epsilon^3&0\\
\end{pmatrix},\
X_3 =\begin{pmatrix}
0&0&0&0&0\\
0&0&0&0&0\\
0&0&0&0&0\\
\epsilon&\frac{1-\alpha}{\alpha}\epsilon^2&0&0&0\\
1+\alpha&\epsilon&0&0&0\\
\end{pmatrix},
\\
X_4&=\begin{pmatrix}
0&0&0&0&0\\
0&0&0&0&0\\
0&0&0&0&0\\
0&0&0&0&0\\
0&\frac{\alpha}{\beta(\alpha-1)}\epsilon&\epsilon^2&\alpha\epsilon^3&0\\
\end{pmatrix},
X_5=\begin{pmatrix}
0&0&0&0&0\\
\alpha\epsilon^3&0&0&0&0\\
\epsilon^2&0&0&0&0\\
\frac{\alpha}{\beta(\alpha-1)}\epsilon&0&0&0&0\\
0&0&0&0&0\\
\end{pmatrix},
\end{align*}
where in order that $X_3$ has rank one, take $\a$ to be a solution
of the equation  $\a^2+\a-1=0$. We determine $\b$ later.

 First note that the limits as $\ep$ goes to zero of
$$\frac{1}{\epsilon}X_5, X_1,\frac{1}{\epsilon}X_4$$
give   matrices corresponding respectively to $d,b,e$.

Consider $X_1+X_2-X_3$. The constant terms and the terms of order $\epsilon$ add to zero.
The $(4,2)$ entry equals 
$$(\frac{1}{\alpha}-\frac{1-\alpha}{\alpha})\ep^2=\ep^2.$$
Hence, the limit gives the matrix corresponding to $a$.

It remains to prove that the identity matrix belongs to the limit. For this we consider
$$X_1+\frac{1}{\beta}(X_2-\frac{1}{1-\alpha}X_3)-X_4-X_5.$$
As   $\epsilon^4$ is on the diagonal it remains to prove that the lower order terms all add to zero.
For the constant term
$$1+\frac{\alpha}{\beta}-\frac{1+\alpha}{\beta(1-\alpha)}=
\frac{\beta(1-\alpha)+\alpha-\alpha^2-1-\alpha}{\beta(1-\alpha)} , 
$$
the numerator equals
$\beta(1-\alpha)-\alpha^2-1 $, so we take $\b=\frac{\a^2+1}{1-\a}$ to make it zero.
To see that the terms proportional to $\epsilon$ cancel, observe  that 
$$\frac{1}{\beta}(1-\frac{1}{1-\alpha})=-\frac{\alpha}{\beta(\alpha-1)}.$$

All three of the terms proportional to $\epsilon^2$ cancel, the only nontrivial being 
$$\frac{1}{\alpha}-\frac{1}{1-\alpha}\frac{1-\alpha}{\alpha}.$$
The term proportional to $\epsilon^3$ also cancels out.

The span of $S_3$ and the identity is the limit of the space spanned by 
\[
\begin{pmatrix}
0&0&0&0&0\\
0&0&0&0&0\\
-\epsilon&0&\frac{1}{2}\epsilon^2&0&0\\
0&0&0&0&0\\
2&0&-\epsilon&0&0\\
\end{pmatrix}
 ,
\begin{pmatrix}
\epsilon^2&0&\frac{1}{2}\epsilon^3&0&\frac{1}{2}\epsilon^4\\
0&0&0&0&0\\
\epsilon&0&\frac{1}{2}\epsilon^2&0&\frac{1}{2}\epsilon^3\\
0&0&0&0&0\\
2&0&\epsilon&0&\epsilon^2\\
\end{pmatrix}
,
\begin{pmatrix}
0&0&0&0&0\\
0&0&0&0&0\\
0&0&0&0&0\\
1&-{\epsilon}&0&\epsilon^2&0\\
0&0&0&0&0\\
\end{pmatrix}
 ,
\]
$$
\begin{pmatrix}
0&0&0&0&0\\
0&\epsilon^2&0&0&0\\
0&0&0&0&0\\
0&\epsilon&0&0&0\\
0&0&0&0&0\\
\end{pmatrix},
\begin{pmatrix}
0&0&0&0&0\\
0&0&0&0&0\\
0&0&0&0&0\\
1&0&0&0&0\\
4&0&0&0&0\\
\end{pmatrix}.
$$

The span of $S_4$ and the identity is the limit of the space spanned by 
\[
\begin{pmatrix}
0&0&0&0&0\\
0&0&0&0&0\\
0&0&0&0&0\\
0&0&0&0&0\\
1&0&0&0&0\\
\end{pmatrix}
 ,
\begin{pmatrix}
\epsilon^6&\epsilon^8&0&0&\epsilon^{12}\\
0&0&0&0&0\\
0&0&0&0&0\\
\epsilon^2&\epsilon^4&0&0&\epsilon^8\\
1&\epsilon^2&0&0&\epsilon^6\\
\end{pmatrix},
\begin{pmatrix}
0&0&0&0&0\\
0&0&0&0&0\\
\epsilon^3&0&\epsilon^6&0&0\\
0&0&0&0&0\\
1&0&\epsilon^3&0&0\\
\end{pmatrix}
 ,
\]
\[
\begin{pmatrix}
0&0&0&0&0\\
0&\epsilon^6&0&-\epsilon^{8}&0\\
0&0&0&0&0\\
0&-\epsilon^4&0&\epsilon^6&0\\
0&0&0&0&0\\
\end{pmatrix}
 ,
\begin{pmatrix}
0&0&0&0&0\\
0&0&0&0&0\\
-\epsilon^3&\epsilon^8&\epsilon^9&0&0\\
-\epsilon^2&\epsilon^7&\epsilon^8&0&0\\
\epsilon^{-3}&-\epsilon^2&-\epsilon^3&0&0\\
\end{pmatrix}
 . 
\]

 The   lower bounds for rank follow by substitution method. For rank upper bounds, consider the seven rank one matrices:
\begin{enumerate}
\item $4$ matrices corresponding to first two and last two entries of the diagonal,
\item $1$ matrix corresponding to $b$,
\item the $2$ matrices:
\[
\begin{pmatrix}
0&0&0&0&0\\
0&0&0&0&0\\
-1&0&1&0&0\\
0&0&0&0&0\\
1&0&-1&0&0\\
\end{pmatrix}
 ,
\begin{pmatrix}
0&0&0&0&0\\
0&0&0&0&0\\
1&0&1&0&0\\
0&0&0&0&0\\
1&0&1&0&0\\
\end{pmatrix}
 . 
\]
\end{enumerate}
In all four cases it is easy to find the remaining rank $1$ matrices. 
\end{proof}

\begin{proof}[Proof of Proposition \ref{fourdprop}]
 If the subspace is given by $a=0$ then we conclude by Proposition \ref{prop:Young}.
Otherwise there exists a matrix $M_1$ in the algebra with the
entries corresponding to $a$ nonzero.
We may assume that the $3$ other generators of the algebra $M_2,M_3,M_4$ have
entries corresponding to $a$ equal to zero.
Further, we may assume that $M_2$ has only one entry nonzero, corresponding to
$b$, as otherwise, by considering $M_1^2$ the algebra would not be $\tend$-closed. Hence we may assume that $M_3$ and $M_4$ have only nonzero
entries on $d$, $c$ and $e$.

Let $\tilde S$ denote the $3$-dimensional vector space corresponding
to $d$, $c$ and $e$, and let $S\subset \tilde S$ be the two-dimensional subspace
spanned by $M_3$ and $M_4$.

Case 1)   $S=\{ M\in \tilde S\mid c=0\}$. We may assume
that $M_3$ corresponds to $e$, $M_4$ corresponds to  $d$ and the algebra is given
by 
\[\begin{pmatrix}
0&0&0&0&0\\
0&0&0&0&0\\
a&0&0&0&0\\
d&\lambda a&0&0&0\\
b&e&a&0&0\\
\end{pmatrix}
 ,\]
for some constant $\lambda$. If $\lambda=0$ we are in case $S_1$. Otherwise,
by multiplying second column by $1/\lambda$ and second row by $\lambda$ we may
assume that $\lambda=1$ and are in case $S_2$.

Case 2)   $S=\{ M\in \tilde S\mid e=0\}$. Subtract 
any multiple
of the third column from the second column,  and add 
the same
multiplicity of the second row to the third row to reduce to   case $S_3$.

Case 3)   $S=\{ M\in \tilde S\mid d=0\}$. This is analogous to
Case 2).

Case 4) As we are not in case 2) or   3) we may assume there are constants $\l\in \BC$, and $\d\neq 0$ such that
\[ S=\begin{pmatrix}
0&0&0&0&0\\
0&0&0&0&0\\
0&0&0&0&0\\
d&\lambda d+\delta e&0&0&0\\
0&e&0&0&0\\
\end{pmatrix}
 . \]
  Then we may assume
that $M_1$ represents the space 
\[\begin{pmatrix}
0&0&0&0&0\\
0&0&0&0&0\\
a&0&0&0&0\\
0&0&0&0&0\\
0&\rho a&a&0&0\\
\end{pmatrix}
 .\]
Subtract $\rho$ times the third column from the second column, and
add $\rho$ times the second row to third row, reducing to $\rho=0$. At this
point we have the algebra 
\[\begin{pmatrix}
0&0&0&0&0\\
0&0&0&0&0\\
a&0&0&0&0\\
d&\lambda d+\delta e&0&0&0\\
b&e&a&0&0\\
\end{pmatrix}
 .\]
Subtract  $1/\delta$ times the fourth row from the fifth row and add 
$1/\delta$ times the fifth column to the fourth column to obtain a subspace
 isomorphic to 
\[\begin{pmatrix}
0&0&0&0&0\\
0&0&0&0&0\\
a&0&0&0&0\\
d&e&0&0&0\\
b&-\frac{\lambda}{\delta}d&a&0&0\\
\end{pmatrix}
 .\]
If $\lambda=0$ we are in Case 2). If $\lambda\neq 0$ we multiply the second
column by $-\frac{\delta}{\lambda}$ and the second row by
$-\frac{\lambda}{\delta}$ to reduce to case $S_4$.
\end{proof}

\subsection{ Examples with large gaps between rank and border
rank} \label{lgex}
 First   consider
$$T_{gap}=a_1\ot (b_1\ot c_1 + \cdots + b_6\ot c_6)+ a_2\ot b_6\ot c_1 + a_3\ot
(b_5\ot c_1+b_6\ot c_3)+a_4\ot b_5\ot c_2+ a_5\ot b_4\ot c_3 +
a_6\ot (b_4\ot c_2+b_5\ot c_3),$$
i.e.,
$$
T_{gap}(A^*)=
\begin{pmatrix}
x_1&&&&&\\
&x_1&&&&\\
&&x_1&&&\\
&x_6&x_5&x_1&&\\
x_3&x_4&x_6&&x_1&\\
x_2&&x_3&&&x_1\\
\end{pmatrix}.
$$
Note $T_{gap}$ is neither $1_B$ nor $1_C$-generic.  In the following proposition we show that $T_{gap}$ is also a counterexample to \cite[Conjecture 0]{MR3069955}. 
\begin{proposition}
 $\ur(T_{gap})=6$ and $\bold R(T_{gap})=12$.
 \end{proposition}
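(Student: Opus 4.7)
The plan is to establish the four inequalities $\ur(T_{gap})\geq 6$, $\ur(T_{gap})\leq 6$, $\bold R(T_{gap})\leq 12$, and $\bold R(T_{gap})\geq 12$ separately. For the border rank, I would first observe that the six matrices $T_{gap}(\alpha^i)$---namely $I$, $E_{61}$, $E_{51}+E_{63}$, $E_{52}$, $E_{43}$, $E_{42}+E_{53}$---are linearly independent in $B\ot C$, so $T_{gap}$ is $A$-concise and $\ur(T_{gap})\geq 6$. For the upper bound, note $T_{gap}(\alpha^1)=I$ is invertible, so $T_{gap}$ is $1_A$-generic, and each non-identity slice $X_i:=T_{gap}(\alpha^i)$ ($i\geq 2$) is supported in rows $\{4,5,6\}$ with image contained in $\tspan\{e_4,e_5,e_6\}\subseteq \ker X_k$ for any $k$; hence $X_iX_j=0$ for all $i,j\in\{2,\ldots,6\}$ and $T_{gap}(A^*)\subset\fgl_6$ is an abelian subalgebra. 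By Theorem \ref{thm:brankdiag}, proving $\ur(T_{gap})=6$ reduces to showing $T_{gap}(A^*)\in Diag_A^g$, equivalently (by Proposition \ref{prop:tensorranktomatrix}) to producing six rank-one matrices $M_i(\epsilon)\in B\ot C$ whose span in $G(6,B\ot C)$ converges to $T_{gap}(A^*)$ as $\epsilon\to 0$.

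For the rank upper bound, I would exhibit the twelve rank-one matrices $E_{11},E_{22},E_{33},E_{44},E_{55},E_{66},E_{61},E_{52},E_{43},E_{42},E_{53}$, and $(b_5+b_6)\ot(c_1+c_3)$. Their span contains $T_{gap}(A^*)$: indeed $I=\sum_i E_{ii}$, the generators $X_2,X_4,X_5$ and the two summands of $X_6=E_{42}+E_{53}$ appear directly, and $X_3=E_{51}+E_{63}=(b_5+b_6)\ot(c_1+c_3)-E_{53}-E_{61}$. This yields $\bold R(T_{gap})\leq 12$ via Proposition \ref{prop:tensorranktomatrix}.

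For the rank lower bound, I would apply the substitution method (Proposition \ref{prop:slice}) in three successive stages. First, the three $A$-slices $X_2=E_{61}$, $X_4=E_{52}$, $X_5=E_{43}$ each have matrix rank one; removing them (and setting $x_2=x_4=x_5=0$) gives $\bold R(T_{gap})\geq 3+\bold R(T')$, where $T'\in\BC^3\ot B\ot C$ has slice matrix obtained from $T_{gap}(A^*)$ by these substitutions. Second, in the slice matrix of $T'$ the rows $1,2,3$ and columns $4,5,6$ each contain only $x_1$; removing these six row/column slices in one step (and setting $x_1=0$) yields $\bold R(T')\geq 6+\bold R(T'')$, where $T''\in\BC^2\ot\BC^3\ot\BC^3$ has slice matrix $M'=\begin{pmatrix} 0&x_6&0\\ x_3&0&x_6\\ 0&0&x_3\end{pmatrix}$. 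Third, removing the last row of $M'$ (which contains only $x_3$) and setting $x_3=0$ leaves the tensor corresponding to $\begin{pmatrix} 0&x_6&0\\ 0&0&x_6\end{pmatrix}$ in $\BC^1\ot\BC^2\ot\BC^3$, namely $a_6\ot(E_{12}+E_{23})$, whose rank equals the matrix rank of $E_{12}+E_{23}$, which is $2$. Hence $\bold R(T'')\geq 1+2=3$, and combining the three stages gives $\bold R(T_{gap})\geq 3+6+3=12$.

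The main obstacle will be the explicit construction for the border rank upper bound, since Proposition \ref{prop:regular} does not apply directly: every nonscalar element of $T_{gap}(A^*)$ is of the form $\lambda I+N$ with $N^2=0$, hence is not regular, and the common centralizer of $\{X_2,\ldots,X_6\}$ in $\fgl_6$ turns out to be eleven-dimensional, strictly containing $T_{gap}(A^*)$. The delicate point will be absorbing the two rank-two generators $X_3=E_{51}+E_{63}$ and $X_6=E_{42}+E_{53}$, whose nonzero entries live in two distinct rows and two distinct columns each, as Grassmannian limits of carefully chosen $\epsilon$-scaled rank-one perturbations of the diagonal basis.
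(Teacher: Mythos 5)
Your decomposition into four inequalities is the right skeleton, and two pieces are fine: the border-rank lower bound via $A$-concision, and the rank upper bound via the twelve rank-one matrices (your list does span $T_{gap}(A^*)$; the paper gets the same $12$ by decomposing $T_{gap}$ as identity-block plus the lower-left $3\times3$ block). But the other two pieces have genuine problems.

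\textbf{Border rank upper bound is not proved.} You reduce to exhibiting a curve of six rank-one matrices whose span limits to $T_{gap}(A^*)$, correctly note that Proposition \ref{prop:regular} cannot apply (no regular element; eleven-dimensional common centralizer), observe abelianity, and then stop. Observing that $T_{gap}(A^*)$ is abelian only puts $T_{gap}$ in $Abel_A$, which is strictly weaker than $Diag_A^g$ (the paper's $T_{Leit,5}$ is abelian of border rank $>5$). Theorem \ref{thm:brankdiag} merely restates the question; it provides no mechanism for proving membership in $Diag_A^g$. The paper's proof at this step is entirely constructive: it writes down six $\epsilon$-dependent rank-one matrices explicitly. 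Without that construction (or some substitute), $\ur(T_{gap})\le 6$ is simply not established.

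\textbf{The rank lower bound argument is invalid at stage 1.} You remove the three rank-one $A$-slices $E_{61},E_{52},E_{43}$ with $\lambda=0$ and assert $\bold R(T_{gap})\ge 3+\bold R(T')$. But Proposition \ref{prop:slice} only guarantees $\bold R(\tilde T_\lambda)\le \bold R(T)-1$ for some unknown $\lambda$ (depending on an optimal rank decomposition), while the ``Moreover'' clause gives the inequality $\bold R(\tilde T_\lambda)\ge \bold R(T)-1$ for all $\lambda$, which is an \emph{upper} bound $\bold R(T)\le 1+\bold R(\tilde T_0)$, not a lower one. Concretely the inference ``$\trank(M_1)=1$, all other slices vanish at the support of $M_1$, therefore $\bold R(T)\ge 1+\bold R(T|_{x_1=0})$'' is false: take $T=a_1\ot E_{11}+a_2\ot(E_{12}+E_{21}+E_{22})$ in $\BC^2\ot\BC^2\ot\BC^2$; then $M_1=E_{11}$ has rank one, $M_2(1,1)=0$, $\bold R(T|_{x_1=0})=\trank(M_2)=2$, yet $T(A^*)=\tspan\{E_{11},\ (e_1+e_2)\ot(e_1+e_2)\}$ has rank $2$, not $\ge 3$. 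In your setting the hidden $\lambda$'s would put linear forms in $x_1,x_3,x_6$ into entries $(6,1),(5,2),(4,3)$, and your stage-3 row removal (``the last row contains only $x_3$'') would then force $x_6=0$ as well and the chain of bounds collapses to $\le 10$. The paper avoids this: it only does a single row/column reduction (rows $1$--$3$, columns $4$--$6$, which is genuinely $\lambda$-independent because those rows/columns contain only $x_1$), arriving at the $3\times3$ tensor $T'$, and then proves $\bold R(T')\ge 6$ directly by observing $T'(A^*)$ is $5$-dimensional while every rank-one matrix in $T'(A^*)$ has $x_3=0$, so $T'(A^*)$ cannot be spanned by five rank-one elements. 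You would need to adopt some such $\lambda$-robust argument for the inner $3\times3$ block rather than relying on $A$-slice substitution.
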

\begin{proof}
To prove that $\bold R(T_{gap})= 12$, by the substitution method, it suffices to prove that
$\bold
R(T')=6$, where $T'$ corresponds to the subspace 
$$
\begin{pmatrix}
&x_6&x_5\\
x_3&x_4&x_6\\
x_2&&x_3\\
\end{pmatrix}.
$$
This space is contained in the space spanned by the following $6$ rank
one matrices:
\begin{enumerate}
\item 3 matrices corresponding to  $x_2,x_4,x_5$,
\item 2 matrices corresponding to $x_6$,
\item the matrix
$$
\begin{pmatrix}
&&\\
1&&1\\
1&&1\\
\end{pmatrix}.
$$
\end{enumerate}
Were $T'$  of rank $5$,  then the space would be equal to the space
generated by five rank one matrices. However, each rank one matrix in this space
has  $x_3=0$. This finishes the proof that $\bold R(T_{{gap}})=12$.
It remains to prove $\ur(T_{gap})=6$. As $T_{gap}$ is $A$-concise we only need
to prove
$\ur(T_{gap})\leq 6$. Consider the following $6$ rank one matrices:
$$
\begin{pmatrix}
&&&&&\\
&&&&&\\
&&&&&\\
&&&&&\\
&&&&&\\
1&&&&&\\
\end{pmatrix},
\begin{pmatrix}
&&&&&\\
&&&&&\\
&&&&&\\
&&1&&&\\
&&&&&\\
&&&&&\\
\end{pmatrix},
\begin{pmatrix}
&&&&&\\
&&&&&\\
&&&&&\\
&&&&&\\
-\epsilon^4&-1&&&&\\
&&&&&\\
\end{pmatrix},
$$
$$
\begin{pmatrix}
\epsilon^5&&\epsilon^9&&&\epsilon^{10}\\
&&&&&\\
&&&&&\\
&&&&&\\
\epsilon^4&&\epsilon^8&&&\epsilon^9\\
1&&\epsilon^4&&&\epsilon^5\\
\end{pmatrix},
\begin{pmatrix}
&&&&&\\
&\epsilon^5&\epsilon^8&&\epsilon^{10}&\\
&&&&&\\
&\epsilon^3&\epsilon^6&&\epsilon^8&\\
&1&\epsilon^3&&\epsilon^5&\\
&&&&&\\
\end{pmatrix},
\begin{pmatrix}
&&&&&\\
&&&&&\\
&\epsilon^8&\epsilon^5&-\epsilon^{10}&&\\
&-\epsilon^3&-1&\epsilon^5&&\\
&-\epsilon^6&-\epsilon^3&\epsilon^8&&\\
&-\epsilon^7&-\epsilon^4&\epsilon^9&&\\
\end{pmatrix}.
$$
\end{proof}

So far all the tensors we considered had rank less than  or equal to twice the border rank.
By \cite{MR3368091}
the maximal rank of a tensor is at most twice the maximal border rank, and all previously known
examples of tensors had rank at most twice the border rank. The following example
goes beyond this ratio.

For $T\in A\ot B\ot C$ and $T'\in A'\ot B'\ot C'$, consider $A\ot A'$  as a single vector space and similarly for $B\ot B'$ and $C\ot C'$. Define $T\ot T'\in (A\ot A')\ot (B\ot B')\ot (C\ot C')$. In what follows, we will use a tensor $T'\in \BC^2\ot \BC^2\ot \BC^2$  to
produce three copies of   $T_{gap}$.

\begin{proposition}\label{biggapprop}
The tensor
$$T_{ biggap }:=T_{gap}\otimes (e_1\otimes f_1\otimes g_1 +e_2\otimes(f_2\otimes g_1+f_1\otimes g_2))
\in(\BC^{12})^{\otimes 3}$$ has border rank $12$ and rank at least $25$.
\end{proposition}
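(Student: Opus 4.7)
I prove the two assertions separately.  \textbf{Border rank $=12$:} the upper bound $\ur(T_{biggap})\leq\ur(T_{gap})\cdot\ur(T')=6\cdot 2=12$ follows from submultiplicativity of border rank together with $\ur(T')=2$ (the latter because $T'$ is the standard $W$--type tensor in $\BC^2\ot\BC^2\ot\BC^2$).  The matching lower bound holds since $T_{gap}$ is concise in each $\BC^6$--factor and $T'$ is concise in each $\BC^2$--factor, so $T_{biggap}$ is concise in each $\BC^{12}$--factor, forcing $\ur(T_{biggap})\geq 12$.

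For the \textbf{rank lower bound} the plan is to apply Proposition~\ref{prop:slice} nine times.  In the $12\times 12$ matrix presentation
$$
M(x,y)=\begin{pmatrix}T_{gap}(x)&T_{gap}(y)\\T_{gap}(y)&0\end{pmatrix}
$$
with $A\ot E$--variables, I exhibit nine rank--one slices of $T_{biggap}$: the three $(b_k\ot f_2)$--slices for $k=1,2,3$, each equal to $(a_1\ot e_2)\ot (c_k\ot g_1)$; the three $(c_m\ot g_2)$--slices for $m=4,5,6$, each equal to $(a_1\ot e_2)\ot (b_m\ot f_1)$; and the three $(\alpha_i\ot\epsilon_1)$--slices for $i\in\{2,4,5\}$, each equal to $T_{gap}(\alpha_i)\ot (f_1\ot g_1)$ (rank one because $T_{gap}(\alpha_i)$ is a single matrix unit for these $i$).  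A direct inspection of the coefficient supports shows that these nine slices occupy mutually disjoint positions in the ambient tensor, so each remains rank--one after any of the others is killed (taking $\lambda=0$ in Proposition~\ref{prop:slice}); iterating nine times yields $\bold R(T_{biggap})\geq 9+\bold R(T_{res})$, with $T_{res}\in\BC^9\ot\BC^9\ot\BC^9$ the residual tensor.

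To conclude it suffices to show $\bold R(T_{res})\geq 16$.  The residual decomposes into three summands whose supports in the combined $(B\ot F)\ot(C\ot G)$--factor are pairwise disjoint: a specialization $T_{**}$ of $T_*$ with three $A$--variables set to zero, sitting in the $(B\ot f_1)\ot(C\ot g_1)$--block (one verifies $\bold R(T_{**})=4$ by the substitution method), and two copies of the rank--six subtensor $T_*$ of $T_{gap}$ that appeared in the proof of Proposition~\ref{prop:brank5}, sitting in the off--diagonal blocks $(B\ot f_1)\ot(C\ot g_2)$ and $(B\ot f_2)\ot(C\ot g_1)$; moreover the $A\ot E$--support of $T_{**}$ is disjoint from that of the two $T_*$--copies.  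Since each of the ranks $\bold R(T_{**})=4$ and $\bold R(T_*)=6$ is witnessed by the substitution method, Theorem~\ref{addthm}, applied block--by--block in the combined $(B\ot F)\ot(C\ot G)$--factor, delivers $\bold R(T_{res})\geq\bold R(T_{**})+2\bold R(T_*)=4+12=16$, whence $\bold R(T_{biggap})\geq 9+16=25$.

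The \emph{main obstacle} is the final block--additivity step: although the three summands occupy pairwise disjoint blocks of the combined $(B\ot F)\ot(C\ot G)$--factor, the two $T_*$--copies share the five--dimensional $A_y$--factor, and all three pieces share the underlying $B$-- and $C$--factors before decoration by $F$ and $G$, so Strassen's additivity conjecture does not apply in its classical form.  The correct argument extends Theorem~\ref{addthm} by combining the block--disjointness with the substitution--computability of every constituent rank; this is where the specific structure of $T'$ as a border--rank--two $W$--type tensor enters essentially.
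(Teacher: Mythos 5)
Your border rank argument is fine and is the same as the paper's (submultiplicativity for the upper bound, conciseness of a product of concise tensors for the lower bound). The rank bound, however, has a genuine gap, and it sits exactly where you flag it: the final "block-additivity" step. The three pieces of your residual are disjoint only in the $(B\ot F)$- and $(C\ot G)$-factors; the two off-diagonal copies of $T_*$ are carried by the \emph{same} slices $a_i\ot e_2$, $i=2,\dots,6$, so they literally share their $(A\ot E)$-variables, and Theorem \ref{addthm} (which needs the summands to lie in disjoint triples $A_i\ot B_i\ot C_i$) does not apply. Additivity of rank over a shared factor is not a routine extension -- it is precisely the hard point -- and "the correct argument extends Theorem \ref{addthm}" is an assertion, not a proof, so $\bold R(T_{res})\geq 16$ is unestablished. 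There are also two bookkeeping errors upstream. First, none of your nine eliminations touches the slice $a_1\ot e_1$, whose matrix is the full identity on the $(B\ot f_1)\ot(C\ot g_1)$ block; hence that block of the residual is not a rank-$4$ specialization of $T_*$ in three of its five variables, but still contains the identity slice together with the slices $a_3\ot e_1$, $a_6\ot e_1$, so the claim $\bold R(T_{**})=4$ does not describe the actual residual. Second, when you eliminate the three $(a_i\ot e_1)$-slices, Proposition \ref{prop:slice} does not let you "take $\lambda=0$": the surviving slices (including the $a_i\ot e_2$ ones) get modified by \emph{unknown} multiples of the removed rank-one matrices $(b_6\ot f_1)\ot(c_1\ot g_1)$, etc., and every subsequent lower bound must hold for all such modifications; your clean three-block decomposition of $T_{res}$ ignores these cross-terms.

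For comparison, the paper avoids any shared-factor additivity. It removes six rows and six columns of the $12\times 12$ presentation (all rows $b_k\ot f_1$, $b_k\ot f_2$ with $k\leq 3$ and all columns $c_m\ot g_1$, $c_m\ot g_2$ with $m\geq 4$); these contain only the variables $a_1\ot e_1$ and $a_1\ot e_2$, so $\bold R(T_{biggap})\geq 12+\bold R(\tilde T)$ with $\tilde T$ a $6\times 6$ residual in the ten remaining variables. It then applies Proposition \ref{prop:slice} nine more times in the $(A\ot E)$-direction, carrying along the unknown constants produced at each step, and finishes by checking via explicit $4\times 4$ minors that the single remaining $6\times 6$ matrix has rank at least $4$ for \emph{every} value of those constants, giving $12+9+4=25$. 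To repair your write-up you would either have to restructure it along these lines or genuinely prove the shared-factor additivity statement you invoke; as it stands the bound $\bold R(T_{biggap})\geq 25$ is not proved.
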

\begin{proof}
$T_{biggap}$ has border rank $12$ as it is the tensor product of concise tensors of border rank $6$ and $2$.
In terms of matrices,
$$
T_{biggap}(A^*\ot \BC^{2*})=
\begin{pmatrix} T_{gap}(A^*) & T_{gap}(\tilde A^*)\\  T_{gap}(\tilde A^*)&
\end{pmatrix}
$$
where $\tilde A^*$ is another copy of $A^*$.
Denote the vectors in $T(\tilde A^*)$ with primes. Eliminate $x_1,x_1'$ by the substitution method so that
$T_{biggap}$ has border rank at least $9$ plus the rank of
 the tensor $\tilde T\in \BC^6\ot\BC^6\ot\BC^{10}$ represented by 
\[
\begin{pmatrix}
0&x_6&x_5&0&x_6'&x_5'\\
x_3&x_4&x_6&x_3'&x_4'&x_6'\\
x_2&0&x_3&x_2'&0&x_3'\\
0&x_6'&x_5'&0&0&0 \\
x_3'&x_4'&x_6'&0&0&0 \\
x_2'&0&x_3'&0&0&0
\end{pmatrix}.
\]
 We  now prove $\bold R(\tilde T)\geq 13$.
 Write $\tilde T=x_2\ot M_2+\cdots + x_6'\ot M_{6}'$.
 Apply Proposition \ref{prop:slice}  first with
 $x_2$ to get a tensor
 $\tilde T^{(1)}=x_3\ot (M_3-\l_3M_2)+ \cdots + x_6'\ot (M_{6}'-\l_{6}'M_2)$
 with $\bold R(\tilde T^{(1)})\leq \bold R(\tilde T)-1$. Continue in this manner,
 eliminating all but $x_3'$ to get a tensor
 $\tilde T^{(9)}=x_3\ot (M_3 +c_2M_2+\cdots + c_{6}'M_6')\in \BC^1\ot \BC^6\ot \BC^6$ where
 the $c_j,c_j'$ are some constants.
  
Hence $\bold R(\tilde T)\geq 9+ \bold R(\tilde T^{(9)})$.  But $ \bold R(\tilde T^{(9)})$ is simply the (usual) rank
of the matrix 
\[\tilde M_3=
\begin{pmatrix}
0&c_6&c_5&0&c_6'&c_5'\\
c_3&c_4&c_6&1&c_4'&c_6'\\
c_2&0&c_3&c_2'&0&1\\
0&c_6'&c_5'&0&0&0 \\
1&c_4'&c_6'&0&0&0 \\
c_2'&0&1&0&0&0
\end{pmatrix}.
\]
It remains to show  that $\trank(\tilde M_3)\geq 4$. Suppose to
the contrary that all the $4\times 4$ minors of $\tilde M_3$ are zero. One of the minors equals $(1-c_2'c_6')^2$, 
hence we would need $c_2',c_6'\neq 0$. 
There is also a minor $(c_2'c_5')^2$, which would force $c_5'=0$. 
However, there is also the  minor $(c_4'c_5'-c_6'^2)^2$ which under these assumptions  cannot be zero.
We conclude $\bold R(T)\geq 12+9+4=25$. 
\end{proof}

By further tensoring $T_{gap}$ analogously as above, we obtain tensors with rank to border rank ratio converging at least to $ 13/6 $.

 The following tensor is a generalization of $S_2$ of Lemma \ref{lem:brank5}, which is the case $T_{biggap,5}$.
\begin{theorem}\label{biggergapthm}
Let $m=2k+1$. Let 
$\tilde T\in \BC^{m-1}\ot\BC^{k+1}\ot\BC^{k+1}$ be the tensor represented by 
\[
\begin{pmatrix}
x_0&0&0&\dots &0&0\\
x_1&x_0&0&\dots&0&0\\
x_2&0&x_0&\vdots&0&0\\
\vdots&&&&&\\
x_{k-1}&0&0&\dots&x_0&0 \\
x_k&x_{k+1}&x_{k+2}&\dots &x_{2k-1}&x_0
\end{pmatrix}.
\]
Let $T_{biggap,m}\in \BC^m\ot \BC^m\ot \BC^m$ be the tensor such that $T_{biggap,m}(A^*)$ has $\tilde T(\BC^{m-1*})$ in its lower
left $(k+1)\times (k+1)$ corner and $x_{2k}$ along the diagonal.
Then $m\leq \ur(T_{biggap,m})\leq m+1$ and $\bold R(T_{biggap,m})=\frac 52 (m-1)$. In particular
the ratio of rank to border rank tends to $\frac 52$ as $m\ra \infty$.
\end{theorem}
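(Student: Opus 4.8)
The plan is to establish the three claims --- the border rank bounds, the rank formula, and the asymptotic ratio (which is immediate from the rank formula once the border rank is pinned between $m$ and $m+1$) --- in that order, imitating the structure used for the tensors $T_{N_{i,j}}$ and for $T_{gap}$ above.

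\textbf{Border rank.} The lower bound $\ur(T_{biggap,m})\geq m$ is automatic since $T_{biggap,m}$ is $A$-concise (the $x_{2k}$ along the diagonal makes $T_{biggap,m}(A^*)$ contain an invertible element, so it is $1_A$-generic and hence $B$- and $C$-concise). For the upper bound $\ur(T_{biggap,m})\leq m+1$ I would exhibit $m+1$ rank-one matrices, depending on a parameter $\epsilon$, whose span degenerates to $T_{biggap,m}(A^*)$ as $\epsilon\to 0$, generalizing the five matrices $X_1,\dots,X_5$ used for $S_2$ in Lemma \ref{lem:brank5} (which is exactly the $m=5$ case). The pattern there --- a single matrix $X_1$ of the form $(\text{column})(\text{row})$ producing the ``big'' term along the bottom/first column, then a matrix producing the sub-diagonal entry $x_j$ ($j=1,\dots,k-1$) together with a compensating correction, and finally a few matrices whose limits recover the diagonal and the remaining entries of the last row --- should extend: for each $j\in\{1,\dots,k-1\}$ one uses a rank-one matrix supported on rows $j+1$ and $m$ to produce $x_j$, the bottom-row entries $x_{k},\dots,x_{2k-1}$ are read off from a Vandermonde-type rank-one matrix in $\epsilon$, and one extra matrix (the ``$+1$'') is needed to make the identity appear in the limit --- this is the reason the border rank is $m+1$ rather than $m$, exactly as in the $S_2$ computation where $\b=\frac{\alpha^2+1}{1-\alpha}$ had to be chosen to cancel the constant term. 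I do not expect $\ur=m$: the argument should run parallel to Proposition \ref{prop:Leitner}, showing $T_{biggap,m}(A^*)T(\a_0)^{-1}$ is not $\tend$-closed, hence not in $Diag_A$.

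\textbf{Rank.} For the lower bound $\bold R(T_{biggap,m})\geq \frac52(m-1)$ I would run the substitution method (Proposition \ref{prop:slice}): the $x_{2k}$ on the diagonal gives $m$ independent rank-one directions, and eliminating the sub-diagonal variables $x_1,\dots,x_{k-1}$ and the last-row variables costs their count, leaving a small residual tensor analogous to the $3\times 3$ residual
\[
\begin{pmatrix}
b&e\\
c&d\\
e&0
\end{pmatrix}
\]
that forced $\bold R(T_{N_{6,8}})\geq 10$; here the residual should be a tensor in $\BC^2\ot\BC^2\ot\BC^?$ (or small variant) whose rank is bounded below by checking that no rank-one matrix in the relevant linear space has a certain entry nonzero --- the same trick used for $T'$ inside $T_{gap}$ and for $S_2$. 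Bookkeeping the counts $m$ (diagonal) $+ (k-1) + (k) = 2k+1+(2k-1)\cdots$ should be arranged to total $\frac52(m-1)=5k$. For the upper bound $\bold R(T_{biggap,m})\leq \frac52(m-1)$ I would exhibit an explicit decomposition: roughly $m$ diagonal rank-one matrices, one per ``off-diagonal'' variable $x_0,\dots,x_{2k-1}$ in positions where they can be isolated, plus a small fixed number of ``mixing'' rank-one matrices (each with a few $1$'s, like the last two matrices in the $S_i$ lists of Lemma \ref{lem:brank5}) that simultaneously realize the remaining entanglement --- the design being dictated by which variables appear in overlapping rows/columns. The resulting count must be made to equal $5k$; I expect the construction to be a clean generalization of the seven-matrix decomposition used for $S_2$ (where $m=5$, $\frac52(m-1)=10$).

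\textbf{Asymptotics.} Once $\bold R(T_{biggap,m})=\frac52(m-1)$ and $\ur(T_{biggap,m})\in\{m,m+1\}$ are established, $\bold R/\ur = \frac{\frac52(m-1)}{m+O(1)}\to \frac52$, which gives the final assertion.

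\textbf{Main obstacle.} The hard part will be the explicit rank upper bound: finding, uniformly in $m$, a set of exactly $5k$ rank-one matrices spanning $T_{biggap,m}(A^*)$. The lower bound via substitution is mechanical, and the border-rank degeneration generalizes the $S_2$ computation fairly directly, but producing a provably optimal rank decomposition --- and checking it has the claimed cardinality rather than, say, $5k+1$ --- requires a careful, case-structured construction of the ``mixing'' matrices, analogous to but more involved than the one-off arguments given for $T_{N_{6,8}}$, $T_{N_{11,13}}$, and the $S_i$ in Lemma \ref{lem:brank5}.
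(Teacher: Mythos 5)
Your overall structure --- rank lower bound by the substitution method, border rank upper bound by an explicit degeneration generalizing the $S_2$ computation, asymptotics following immediately --- matches the paper. The border-rank portion of your plan is essentially right: the lower bound $\ur\geq m$ follows from $A$-conciseness, and the upper bound is proven in the paper by exhibiting exactly $m+1$ rank-one elements of $B\ot C$ (a group of four fixed matrices plus two families of size $k-1$) whose span degenerates to $T_{biggap,m}(A^*)$, directly generalizing $S_2$. Your side-remark that one should expect $\ur=m+1$ via non-$\tend$-closedness is plausible but unnecessary, since the theorem only asserts $\ur\leq m+1$.

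The weak point is the rank lower bound. Your proposed bookkeeping does not close: $m + (k-1) + k = 4k$, which falls $k$ short of the target $5k$, and the residual after your eliminations is \emph{not} a fixed-size tensor like the $3\times 2$ block that appeared in the $T_{N_{6,8}}$ estimate --- its dimensions grow with $k$. You have overlooked that the auxiliary tensor $\tilde T\in\BC^{m-1}\ot\BC^{k+1}\ot\BC^{k+1}$ defined \emph{in the theorem statement} is precisely the residual you should aim for. The paper's argument is two-layered: first apply the substitution method in the $B$ and $C$ factors, removing the $k$ rows $1,\dots,k$ and the $k$ columns $k+2,\dots,m$ of $T_{biggap,m}(A^*)$, each of which contains only the diagonal variable $x_{2k}$; setting $x_{2k}=0$ leaves exactly $\tilde T$ in the lower-left $(k+1)\times(k+1)$ corner, so $\bold R(T_{biggap,m})\geq 2k + \bold R(\tilde T)$. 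Second, $\bold R(\tilde T)=3k$ is computed by substitution in the $A$ factor: the variables $x_1,\dots,x_{2k-1}$ each have a rank-one slice, contributing $2k-1$, and the residual $x_0\ot I_{k+1}$ has rank $k+1$. The total is $2k+3k = 5k$. Without identifying $\tilde T$ as the intermediate object, neither the count nor the structure of the argument comes out right. (For the rank upper bound the paper simply asserts ``equality holds,'' so your worry about producing an explicit $5k$-term decomposition is not specific to your approach; one cheap way to close it is via the ``Moreover'' clause of Proposition \ref{prop:slice}, which gives $\bold R(T_{biggap,m})\leq (2k-1) + \bold R(x_{2k}\ot I_m + x_0\ot M_0)$, and the latter pencil has nilpotent part of Jordan type $(3,2^{k-1})$, hence rank $m+k=3k+1$.)
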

\begin{proof}
It is straightforward that $\bold R(\tilde T)=3k$. 
  By  the substitution method $\bold R(T_{biggap,m})\geq 2k+\bold R(\tilde T)=5k$ and 
in fact equality holds.  

To estimate the border rank, fix bases $\{ a_j\}$ of $A$, $\{ b_j\}$ of $B$ and $\{ c_j\}$ of $C$ so that
  $\tilde T$   belongs to the subspace $\tspan\{ a_1,\dots,a_{m-1}\}\ot 
  \tspan\{b_{k+1},\dots,b_m\}\ot\tspan\{e_1,\dots,e_{k+1}\}$.
Consider the following $m$ rank $1$ elements of $B\ot C$:
\begin{enumerate}
\item $(2b_m+\epsilon b_{k+1}+\epsilon^2b_1)\ot (c_1+\frac 12  \epsilon c_{k+1}+
\frac 12 \epsilon^2 c_m),$
\item $(2b_m - \epsilon b_{k+1}+\epsilon^2b_1)\ot (c_1-\frac 12  \epsilon c_{k+1}+
\frac 12 \epsilon^2 c_m),$
\item $(b_{k+1}+\dots+b_{m-1}+2b_m)\ot c_1,$
\item $b_m\ot(2c_1+c_2+\dots+c_k),$
\item $b_i\ot (c_1+\epsilon c_{i-k+1}+\epsilon^2c_{i})$ for $i=k+2,\dots,m-1,$
\item $(b_m - \epsilon b_{k+i}+\epsilon^2 b_i)\ot c_{i}$ for $i=2,\dots,k.$
\end{enumerate}
The rank one elements of $\tilde T(\BC^{m-1*})$ are obtained from 5) and 6). 
The diagonal of $\tilde T(\BC^{m-1*})$ is obtained by adding all
elements of 5) with  1) and subtracting 3). 
The identity matrix is obtained by adding all elements of  5) and 6) with  1) and 2) and subtracting 3) and 4).  
\end{proof}

\begin{remark}After we posted a preprint of this article on the arXiv, Jeroen Zuiddam
shared with us his forthcoming article (now \cite{2015arXiv150405597Z})  presenting
an example of a sequence of tensors with rank to border rank ratio approaching three.
\end{remark}

 \begin{question} Is the ratio of rank to border rank unbounded? Can one find explicit tensors with ratio $3$ or larger?
 \end{question}
 
In this context we recall the following problem, which is a variant of our question
in the situation of minimal border rank:
\begin{problem} \cite[Open problem 4.1]{blaser2014explicit}
  Is there an explicit family of tensors $T_m\in \BC^m\ot\BC^m\ot \BC^m$
   with $\bold R(T_m)\geq(3+\ep)m$ for some $\ep>0$?
  Can we even achieve this for tensors corresponding to the multiplication in an algebra, i.e., is there an explicit family of algebras $\cA_m$  with $\bold R(\cA_m)\geq  (3+\ep )\tdim\cA_m$ for some $\ep > 0$?   
\end{problem}
   
 For tensors $T\in A_1\otc A_n$,   there are tensors of rank $n-1$ of border rank two.

\subsection{There are parameters worth of non-isomorphic  $1$-generic border rank
$m$ tensors in $\BC^m\ot \BC^m\ot \BC^m$}

Let $\t\in Mat_{p\times n}$. Set $m=n+p+1$.  Define
$$
T_{Leit,\t}:=
a_1\ot (b_1\ot c_1+\cdots \ot b_{p+n+1}\ot c_{p+n+1})
+\sum_{j=1}^p a_{1+j}\ot b_j\ot ( \sum_{s=1}^n \t_{j,s}c_{p+1 +s})
+\sum_{s=1}^na_{p+1+s}b_{p+1}c_{p+1+s}
$$
Leitner \cite{MR3503693} shows that $\ur(T_{{Leit},\t})=m$ and that the family gives
non-isomorphic tensors for $p\geq 4$, $n\geq 2$, i.e., $m\geq 7$.

\begin{remark}  Leitner only shows the border rank condition under certain genericity
hypotheses
on $\t$, but from the border rank perspective they are  unnecessary by
taking limits.
(Border rank is semi-continuous.)
\end{remark}

In particular, when $p=4, n=2$   Leitner  shows that there is a one-parameter
family of non-isomorphic subgroups of $SL_n\BR$ that are limits under conjugacy of the 
torus. The same argument shows  that there is a corresponding one-parameter family of
non-isomorphic tensors.

\section{Coppersmith-Winograd value}\label{cwvaluesect}
As mentioned in the introduction, a   motivation for this article is the
study of upper bounds for the exponent of matrix
multiplication. For our purposes, the {\it exponent}  $\o$ of matrix multiplication, which governs the  complexity
of the matrix multiplication tensor $\Mn\in \BC^{n^2}\ot \BC^{n^2}\ot \BC^{n^2}$, may be defined as
$$\o:=\tinf\{ \t\in \BR\mid \ur(\Mn)=O(\nnn^{\t})\}.
$$
Na\"ively one has $\o\leq 3$ and it is generally conjectured by computer scientists that $\o=2$.

The \lq\lq proper\rq\rq\ way to determine the exponent of matrix
multiplication would be to determine the
border rank of the matrix multiplication tensor. Unfortunately, this appears to
be beyond our current capabilities.
Thanks to  a considerable amount of work, most notably
\cite{MR623057,MR882307,copwin135}, one  can prove upper bounds
for matrix multiplication by considering other tensors.

First,    Sch\"onhage's asymptotic sum inequality \cite{MR623057} states that for all $\lll_i,\mmm_i,\nnn_i$, with  $1\leq i\leq s$,
$$
\sum_{i=1}^s(\mmm_i\nnn_i\lll_i)^{\frac{\o}3}\leq \ur(\bigoplus_{i=1}^s M_{\langle \mmm_i,\nnn_i,\lll_i\rangle}).
$$
Then Strassen \cite{MR882307} pointed out that it would be sufficient to find upper bounds on the border rank of a tensor
that degenerated into a disjoint sum of matrix multiplication tensors. This was exploited most successfully
by Coppersmith and Winograd \cite{copwin135}, who attained their success with a tensor $\tilde T_{CW}$. The purpose of
this section is to isolate geometric aspects of this tensor in the hope of finding other tensors
that would enable further upper bounds on the exponent.

In practice, only
tensors of minimal, or near minimal border rank
have been used to prove upper bounds on the exponent. Call a tensor $T$ that gives a \lq\lq good\rq\rq\ upper bound for
the exponent via the methods of
\cite{MR882307,copwin135}, of {\it high Coppersmith-Winograd value} or {\it high
CW-value} for short.  More precisely, $T$ has high Coppersmith-Winograd value
if the quantity $Val_{\rho}(T^{\ot k})$
as defined in \cite[p. 8]{DBLP:journals/corr/AmbainisFG14} is large for some $k$.  We briefly review tensors that have been utilized.
  Our study is incomplete because the CW-value of
a tensor also
depends on its presentation, and in different
bases a tensor can have quite different CW-values. Moreover, even determining
the value
in a given presentation still involves some \lq\lq art\rq\rq\ in the choice of a
good decomposition, choosing the correct tensor power, estimating the value and probability of each block \cite{williams}.

\subsection{Sch\"onhage's tensors}
Sch\"onhage's tensors are  $T_{Sch}=M_{\langle N,1,1\rangle}\ot M_{\langle 1,m,n\rangle}$
where
$N=(m-1)(n-1)$.
Here $\ur(T_{Sch})=N+1$ while $\bold R(T_{Sch})=N+mn=2\ur(T_{Sch})-(m+n-1)$.
It gives $\o<2.55$. There is nothing to gain by taking tensor
powers here because the two matrix multiplications are already disjoint.

\subsection{The Coppersmith-Winograd tensors}\label{cwtensors}
 Coppersmith and Winograd define two tensors:
\be\label{Tcw}
T_{q,CW}:= \sum_{j=1}^q a_0\ot b_j\ot c_j+ a_j\ot b_0\ot c_j+ a_j\ot b_j\ot c_0
\in \BC^{q+1}\ot \BC^{q+1}\ot \BC^{q+1}
\ene
and
\be\label{tildeTcw}
\tilde T_{q,CW}:= \sum_{j=1}^q (a_0\ot b_j\ot c_j+ a_j\ot b_0\ot c_j+ a_j\ot
b_j\ot c_0 )
+a_0\ot b_0\ot c_{q+1}+ a_0\ot b_{q+1}\ot c_{0}+ a_{q+1}\ot b_0\ot c_0
\in \BC^{q+2}\ot \BC^{q+2}\ot \BC^{q+2}
\ene
 both of which have    border rank $q+2$.

In terms of matrices,
$$
T_{q,CW}(C^*)=\begin{pmatrix} 0&x_1&\cdots & &x_q\\
x_1& x_0 & 0  &\cdots & \\
x_2& 0  & x_0 & & \\
\vdots &\vdots & & \ddots & \\
x_q& 0 & \cdots & 0 &x_0
\end{pmatrix}
$$
and
$$
\tilde T_{q,CW}(C^*)= \begin{pmatrix} x_{q+1}&x_1&\cdots & &x_q&x_0 \\
x_1& x_0 & 0  &\cdots& & 0\\
x_2& 0  & x_0 & & & \\
\vdots &\vdots &  &\ddots & & \\
x_q& 0 & \cdots & 0& x_0  & \\
x_0& 0 & \cdots & & 0 & 0
\end{pmatrix} .
$$
Permuting bases, we may also write

$$
\tilde T_{q,CW}(C^*)= \begin{pmatrix} x_{0}& &  & & &   \\
x_1& x_0 & 0  &\cdots& & 0\\
x_2& 0  & x_0 & & & \\
\vdots &\vdots &  &\ddots & & \\
x_q& 0 & \cdots & 0& x_0  & \\
x_{q+1}& x_1 & \cdots & & x_q & x_0
\end{pmatrix} .
$$

\begin{proposition}\label{prop:CWrank}
$\bold R(T_{q,CW})=2q+1$, $\bold R(\tilde T_{q,CW})=2q+3.$
\end{proposition}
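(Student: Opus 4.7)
My plan is to prove each equality by combining an explicit rank decomposition (for the upper bound) with the substitution method of Proposition \ref{prop:slice} and a projection-based support count (for the lower bound), exploiting that both tensors are symmetric.

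For the upper bound of $T_{q,CW}$ I would use the polarization identity
\[
\tfrac12\bigl[(a_0+a_j)\otimes(b_0+b_j)\otimes(c_0+c_j)+(a_0-a_j)\otimes(b_0-b_j)\otimes(c_0-c_j)\bigr]=a_0 b_0 c_0+a_0 b_j c_j+a_j b_0 c_j+a_j b_j c_0,
\]
summed over $j=1,\ldots,q$ and adjusted by the single correction $-q\cdot a_0 b_0 c_0$; this expresses $T_{q,CW}$ as a sum of $2q+1$ rank-one tensors. For $\tilde T_{q,CW}$ I would keep these $2q$ polarization summands but combine the leftover $-q\cdot a_0 b_0 c_0$ with the three extra terms $a_0 b_0 c_{q+1}+a_0 b_{q+1} c_0+a_{q+1} b_0 c_0$ into a tensor supported in the $2\times 2\times 2$ block $\langle a_0,a_{q+1}\rangle\otimes\langle b_0,b_{q+1}\rangle\otimes\langle c_0,c_{q+1}\rangle$; its $a_0$-slice is the rank-$2$ matrix $\bigl(\begin{smallmatrix}-q&1\\1&0\end{smallmatrix}\bigr)$ and its $a_{q+1}$-slice is the rank-$1$ matrix $\bigl(\begin{smallmatrix}1&0\\0&0\end{smallmatrix}\bigr)$, and a short determinant argument excludes rank-$2$ decompositions of this block, so the block has rank exactly $3$. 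Altogether this gives $2q+3$ rank-one summands for $\tilde T_{q,CW}$.

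For the lower bounds I would apply Proposition \ref{prop:tensorranktomatrix}: $\bold R(T_{q,CW})$ equals the minimum number of rank-one matrices in $B\otimes C$ whose linear span contains $T_{q,CW}(A^*)$. Set $V=b_0\otimes C+B\otimes c_0$ and $V^c=B'\otimes C'$ with $B'=\langle b_1,\ldots,b_q\rangle$ and $C'=\langle c_1,\ldots,c_q\rangle$. The slice $M_0=T_{q,CW}(a_0^*)=\sum_{j\geq 1}b_j\otimes c_j$ lies in $V^c$ with matrix rank $q$, while the slices $M_j=T_{q,CW}(a_j^*)=b_0 c_j+b_j c_0$ span a $q$-dimensional subspace of $V$. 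In any rank-one decomposition of $T_{q,CW}(A^*)$, the $V^c$-projections must span $M_0$ and hence at least $q$ summands have nonzero $V^c$-component. The $V$-projections of those $q$ rank-ones take the specific form $b_0 c_0+M_j$ (up to scalars) and thus span a $q$-dimensional subspace which misses $\mathrm{span}(M_1,\ldots,M_q)$ by exactly the direction $b_0\otimes c_0$; hence at least one additional rank-one summand is required, yielding $\bold R(T_{q,CW})\geq 2q+1$. An entirely analogous count for $\tilde T_{q,CW}$, using that the three extra slices involving $a_{q+1},b_{q+1},c_{q+1}$ live on independent new coordinate directions, produces $\bold R(\tilde T_{q,CW})\geq 2q+3$.

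The main obstacle is rigorously excluding ``clever'' decompositions that use rank-one matrices outside $V$ to serve both roles simultaneously: the crucial point is that the $V$- and $V^c$-projections of such a rank-one are linked by the same underlying tensor factors, so their contributions cannot be tuned independently, and tracking this linkage carefully is what closes the gap from the naive $2q$ bound (which is all the substitution method alone directly delivers here) to the sharp $2q+1$.
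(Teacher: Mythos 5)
Your upper bounds are fine, and they are essentially the paper's decompositions in tensor form: the matrices $(b_0+b_j)\otimes(c_0+c_j)$ produced by your polarization identity are exactly the paper's ``four-corner'' rank-one matrices, and since any $2\times 2\times 2$ tensor has rank at most $3$, your corner block for $\tilde T_{q,CW}$ gives $2q+3$ just as in the paper.

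The lower bound, however, has a genuine gap, and it is the one you yourself flag at the end. The assertion that every rank-one summand with nonzero $V^c$-component has $V$-projection of the specific form $\lambda\,(b_0\otimes c_0+M_j)$ is unjustified and false for general rank-one matrices: a summand such as $(b_0+b_1)\otimes(c_1+c_2)$ has nonzero $V^c$-part but $V$-projection $b_0\otimes(c_1+c_2)$, which is not of that form, and the hypothesis ``the span of the $X_u$ contains $T(A^*)$'' constrains the collection, not the individual summands. Moreover, even granting that claim, the arithmetic does not visibly reach $2q+1$: you exhibit at least $q$ summands meeting $V^c$ plus ``one additional'' one, i.e.\ $q+1$; the missing $q$ would have to come from showing that the summands building $M_0$ cannot simultaneously supply the $V$-directions $M_1,\dots,M_q$, but in actual optimal decompositions (including your own, where $(b_0\pm b_j)\otimes(c_0\pm c_j)$ has nonzero parts in both $V$ and $V^c$) the same summands do serve both roles, so the two counts cannot simply be added. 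This linkage is exactly what you call the main obstacle, and it is not resolved, so the bounds $\bold R(T_{q,CW})\geq 2q+1$ and $\bold R(\tilde T_{q,CW})\geq 2q+3$ are not established. The paper closes this differently: it applies the substitution method (Proposition \ref{prop:slice}) so as to strip off $2q-2$ rank units while keeping the variables $x_0,x_1$ alive, reducing to the $2\times 2$ pencil $\begin{pmatrix} 0 & x_0\\ x_0 & x_1\end{pmatrix}$, a W-type $2\times 2\times 2$ tensor of rank $3$, whence $2(q-1)+3=2q+1$ (and analogously for $\tilde T_{q,CW}$). So, contrary to your closing remark that substitution only yields $2q$, the sharp bound is obtained by substitution, provided the reduction is arranged so that the surviving block is this rank-$3$ pencil rather than a rank-$2$ one; if you want to salvage your projection argument instead, the step you must supply is precisely a proof that no rank-one matrix meeting $V^c$ can contribute the $b_0\otimes c_0$-free part of $V$ needed for $M_1,\dots,M_q$ without a compensating extra summand.
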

\begin{proof}
We first prove the lower bound for $T_{q,CW}$. Apply Proposition
\ref{prop:slice} to show that the rank of the tensor is at least $2q-2$ plus the
rank of 
$$
\begin{pmatrix}
0&x_0\\
x_0&x_1
\end{pmatrix},
$$
which has rank $3$. An analogous estimate provides the lower bound for  $\bold
R(\tilde T_{q,CW})$.
To show that $\bold R(T_{q,CW})\leq 2q+1$ consider the following rank $1$
matrices, whose span contains $T(A^*)$:

1) $q+1$ matrices with all entries equal to $0$ apart from one entry on the
diagonal equal to $1$,

2) $q$ matrices indexed by $1\leq j\leq q$, with all entries equal to zero apart
from the four entries  $(0,0), (0,j),(j,0),(j,j)$ equal to $1$.

For the tensor $\tilde T_{CW}$ we consider the same matrices, however both
groups
have one more element.
\end{proof}

Coppersmith and Winograd used $\tilde T_{CW}$ to show $\o<2.3755$. In subsequent
work Stothers \cite{stothers}, resp.  V. Williams \cite{williams},
resp. LeGall \cite{legall} used   $\tilde T_{CW}^{\ot 4}$
resp.  $\tilde T_{CW}^{\ot 8}$, resp. $\tilde T_{CW}^{\ot 16}$ and $\tilde T_{CW}^{\ot 32}$
leading to the current \lq\lq world record\rq\rq \ 
$\o<2.3728639$.

 Ambainis,  Filmus and LeGall \cite{DBLP:journals/corr/AmbainisFG14} showed that
 taking higher powers of $\tilde T_{CW}$ when $q\geq 5$
 cannot prove $\o<2.30$ by this method alone.
 Their suggestion that one should look for new tensors to prove further upper bounds 
   was one  motivation for this paper.

\subsection{Strassen's tensor} Strassen uses the following concise tensor to show
$\o<2.48$:
\be\label{strassupten}
T_{Str,q}=\sum_{j=1}^q a_0\ot b_j\ot c_j+ a_j\ot b_0\ot c_j\in \BC^{q+1}\ot
\BC^{q+1}\ot
\BC^q
\ene
which has border rank $q+1$, as the  the $q$ vectors $[a_0\ot b_j\ot c_j+a_j\ot b_0\ot c_j]$, $1\leq j\leq q$,  
are tangent vectors to $q$
points $[a_0\ot b_0\ot c_1]\hd [a_0\ot b_0\ot c_q]$ that lie on   the
 $\pp{q-1}=\BP \{ a_0\ot b_0\ot \langle c_1\hd c_q\rangle \}\subset Seg(\BP A\times \BP B\times \BP C)$. 
 Any linear combination of $q+1$ tangent vectors based at $q+1$ linearly dependent points, with any
 size $q$ subset independent, of
 any variety,  has border rank at most $q+1$, see \cite[\S 10.1]{MR2628829}. Here we just take one of the
 vectors to be zero.  Note that $T_{Str,q}$ is a specialization of $T_{CW,q}$  obtained by setting
$c_0=0$.  By the substitution method the rank of the tensor equals $2q$. 

The corresponding linear spaces are:
$$
T_{Str,q}(C^*)=\begin{pmatrix} 0&x_1&\cdots &x_q\\ x_1& 0 & \cdots  & 0\\
x_2& &  & \\
\vdots &\vdots &  &\vdots  \\
x_q& 0 & \cdots &0
\end{pmatrix},
$$
and
$$
T_{Str,q}(A^*)=
\begin{pmatrix} x_1& x_2 &\cdots & x_q\\
x_0 & 0 & \cdots &0\\
0& x_0 & 0 &\vdots  \\
\vdots & \ddots & & \\
0&\cdots  &0 & x_0\end{pmatrix}.
$$

Actually Strassen uses the tensor product of this tensor with its images under
$\BZ_3$
acting on the three factors:
  $\tilde T:=T\ot T'\ot T''$ where $T',T''$ are cyclic permutations of
$T=T_{Str,q}$.
Thus $\tilde T \in \BC^{q(q+1)^2}\ot \BC^{q(q+1)^2}\ot \BC^{q(q+1)^2}$
and $\ur(\tilde T)\leq (q+1)^3$.

\subsection{Extremal tensors}\label{flagexsect}

Let $A,B,C=\BC^m$. 
There are normal forms for germs of curves in $Seg(\BP A\times \BP B\times \BP C)$
up to order $m-1$, namely
$$
T_t=(a_1+ta_2+\cdots + t^{m-1}a_m+O(t^{m }))\ot 
(b_1+tb_2+\cdots + t^{m-1}b_m+O(t^{m  }))\ot (c_1+tc_2+\cdots + t^{m-1}c_m+O(t^{m }))
$$
and if the $a_j$, $b_j$, $c_j$ are each linearly independent sets of vectors,
we will call the curve {\it general} to order $m-1$.

\begin{proposition}
Let $T\in A\ot B\ot C=\BC^m\ot \BC^m\ot \BC^m$. Let $T_0^{(m-1)}(A^*):=\frac{d^{m-1}}{(dt)^{m-1}}|_{t=0}T_t(A^*)$.
If  
$$T(A^*)=
  T_0^{(m-1)}(A^*) ,
$$
with
$T_t$ a curve that is general to order $m$, then $T(A^*)$ is the centralizer of
a regular nilpotent element.
\end{proposition}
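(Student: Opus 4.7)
The plan is to compute $T(A^*)$ explicitly as the osculating $m$-plane to the rank-one curve, use an invertible element of it to pass to $\tend(B)$, and verify that the resulting subspace is spanned by the powers of a regular nilpotent matrix.

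First, since $T_t=\alpha(t)\ot\beta(t)\ot\gamma(t)$ is rank one, $T_t(A^*)\subset B\ot C$ is the line spanned by $\beta(t)\ot\gamma(t)$, so $T_0^{(m-1)}(A^*)$ is the span of the $m$ vectors
$$
X_k:=\frac{d^k}{dt^k}\bigg|_{t=0}\bigl(\beta(t)\ot\gamma(t)\bigr)=k!\sum_{i+j=k}b_{i+1}\ot c_{j+1},\qquad k=0,1,\ldots,m-1.
$$
The genericity hypothesis forces $\{b_1,\dots,b_m\}$ and $\{c_1,\dots,c_m\}$ to be bases, so the $X_k$ are linearly independent and form a basis of $T(A^*)$.

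Next I would single out the top derivative $X_{m-1}=(m-1)!\sum_i b_i\ot c_{m+1-i}$, which, viewed as an element of $B\ot C\simeq\thom(C^*,B)$, is (a scalar multiple of) the anti-diagonal matrix $J$, and in particular is invertible. Choosing $\alpha_0\in A^*$ with $T(\alpha_0)=J$, pass to $\cE_{\alpha_0}(T)=T(A^*)J^{-1}\subset\tend(B)$. A direct index chase gives
$$
X_k J^{-1}(b_i)=\begin{cases}k!\,b_{i-(m-1-k)} & \text{if } i>m-1-k,\\ 0 & \text{otherwise.}\end{cases}
$$
So if $N\in\tend(B)$ denotes the regular nilpotent satisfying $N(b_i)=b_{i-1}$ for $i\geq 2$ and $N(b_1)=0$, this identifies $X_k J^{-1}$ with $k!\,N^{m-1-k}$. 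Hence $\cE_{\alpha_0}(T)=\tspan\{I,N,N^2,\ldots,N^{m-1}\}$, which is precisely the centralizer of the regular nilpotent $N$.

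The only point requiring care is the bookkeeping for the identifications $B\ot C\simeq\thom(C^*,B)\simeq\tend(B)$ induced by $J^{-1}$; once the conventions are fixed, the argument reduces to the elementary Taylor-expansion computation above, and no nontrivial obstacle remains.
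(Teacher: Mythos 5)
Your proof is correct and follows essentially the same route as the paper's: compute the osculating $m$-plane to the rank-one curve explicitly (getting the Hankel anti-triangular space spanned by the $X_k$), then right-multiply by the invertible anti-diagonal element to land on $\tspan\{I,N,\dots,N^{m-1}\}=\mathbf{C}(N)$. You are somewhat more explicit about the final identification $X_kJ^{-1}=k!\,N^{m-1-k}$ than the paper, which simply asserts the Hankel space is isomorphic to the centralizer, but the content is the same.
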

\begin{proof}
Note that $T_0^{(q)}=q!\sum_{i+j+k=q-3}a_i\ot b_j\ot c_k$, i.e.,
$$
T_0^{(q)}(A^*)=
\begin{pmatrix}
x_{q-2} & x_{q-3} & \cdots & \cdots  & x_1 & 0 &\cdots \\
x_{q-3} & x_{q-4} & \cdots & x_1 & 0 & \cdots &\cdots \\
  \vdots & &   &   &   &   & \\
x_1     & 0 & \cdots  &   &   &   & \\
0     & 0 & \cdots  &   &   &   & \\
\vdots       & \vdots   &    &   &   &   & \\
   0     & 0 & \cdots  &   &   &   &
\end{pmatrix}
$$
in particular,   each space contains the   previous ones, and the last equals 
$$
\begin{pmatrix} x_m& x_{m-1} & \cdots &  & x_1\\
x_{m-1} &x_{m-2}& \cdots & x_{1} & 0 \\
\vdots &\vdots & \udots& & \\
\vdots &x_1 & & & \\
x_1 & 0 & & &
\end{pmatrix}
$$
which is isomorphic to the centralizer of a regular nilpotent element. \end{proof}

This  provides another, explicit proof that the centralizer of a regular nilpotent element belongs
to the closure of diagonalizable algebras. 

\begin{proposition}\label{cwflagbb} 
 Let $T\in A\ot B\ot C=\BC^m\ot \BC^m\ot
\BC^m$ be   of border rank $m >2 $.
Assume $\BP T(A^*)\cap Seg(\BP B\times \BP C)=[X]$ is a single point, and
$\BP \hat T_{[X]}Seg(\BP B\times \BP C)\supset  \BP T(A^*)$.   Then $T$ is not
$1_A$-generic.
\end{proposition}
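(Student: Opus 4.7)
The plan is to reduce the claim to an elementary tangent-space computation. The key observation is that the affine tangent space to the Segre at a rank-one point consists entirely of matrices of rank at most two, so the containment hypothesis forces $T(A^*)$ to avoid all matrices of rank $m$ once $m > 2$.

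First I would write $X = b_0 \otimes c_0$ for some $b_0 \in B$, $c_0 \in C$, and recall the standard description
$$
\hat T_{[X]}Seg(\BP B\times \BP C) = B \otimes c_0 \, + \, b_0 \otimes C \; \subset \; B \otimes C.
$$
Next I would verify directly that every element of this subspace has rank at most $2$: an arbitrary $Y = b'\otimes c_0 + b_0 \otimes c'$ in the tangent space, viewed as a linear map $C^* \to B$, sends $\gamma \mapsto \gamma(c_0)\, b' + \gamma(c')\, b_0$, whose image lies in $\tspan\{b_0, b'\}$. So $\trank(Y) \leq 2$.

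Finally I would combine these observations with the hypothesis. Since $\BP T(A^*) \subseteq \BP \hat T_{[X]}Seg(\BP B\times \BP C)$, every $T(\a)$ has rank at most $2$; as $m > 2$, no $T(\a)$ can be invertible, which is exactly the statement that $T$ is not $1_A$-generic.

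There is no genuine obstacle; the argument is a one-line tangent space computation. The border rank $m$ and single-point intersection hypotheses do not enter the proof — they serve only to situate the proposition alongside Proposition \ref{infflagprop}: that result says that a $1_A$-generic $A$-concise tensor of border rank $m$ whose image meets the Segre in a single point $[X]$ must have $\BP(T(A^*) \cap \hat T_{[X]}Seg)$ containing a line, while the present proposition says the opposite extreme (all of $\BP T(A^*)$ contained in the tangent space) is incompatible with $1_A$-genericity for $m > 2$.
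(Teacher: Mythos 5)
Your proof is correct and is essentially the same argument as the paper's, which states it in one line (``No element of $\BP \hat T_{[X]}Seg(\BP B\times \BP C)$ has rank greater than two''). You have simply spelled out the standard tangent-space description $\hat T_{[X]}Seg = B\ot c_0 + b_0\ot C$ and the resulting rank bound explicitly, and your remark that the border-rank and single-intersection hypotheses are not actually used is accurate.
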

\begin{proof}
No element of $\BP \hat T_{[X]}Seg(\BP B\times \BP C)$
has rank greater than two.
\end{proof}

The purpose of stating Proposition \ref{cwflagbb} is to motivate the following theorem:

\begin{theorem}\label{cwflag} 
If  $\BP T(A^*)\cap Seg(\BP B\times \BP C)=[X]$ is a single point, and
$\BP \hat T_{[X]}Seg(\BP B\times \BP C)\cap   \BP T(A^*)$ is a $\pp{m-2}$ and
$T$ is $1_A$-generic, then $T=\tilde T_{m-2,CW}$ is
isomorphic to the Coppersmith-Winograd tensor.
\end{theorem}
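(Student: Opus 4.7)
The plan is to normalize so that $T(A^*)$ becomes a unital, abelian, $\tend$-closed subalgebra of $\tend(B)$, and then to show that the two geometric hypotheses collapse this subalgebra to the Coppersmith--Winograd normal form. First, $1_A$-genericity lets us pick $\alpha_0$ with $T(\alpha_0)$ invertible, and a $GL(B)\times GL(C)$ change of basis normalizes $T(\alpha_0)=\mathrm{Id}_B$; then $T(A^*)\subset\tend(B)$ is an $m$-dimensional subspace containing $I$, abelian by Proposition~\ref{thm:brankdiag} and $\tend$-closed by Proposition~\ref{closedcor}. Writing $T(A^*)=\BC\cdot I\oplus H$ with $H=T(A^*)\cap \hat T_{[X]}Seg(\BP B\times \BP C)$, the second hypothesis gives $\dim H=m-1$.

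A first step is to show that $X$ must be \emph{nilpotent} of rank one. If $X$ had nonzero trace, then after conjugation $X\propto E_{0,0}$, and commuting with $X$ would force $T(A^*)\subset\BC\oplus\tend(\BC^{m-1})$; but this intersects $\hat T_{[E_{0,0}]}Seg(\BP B\times\BP C)$ only in the line $\BC\cdot E_{0,0}$, contradicting $\dim H\geq 2$. So $X$ is rank-one nilpotent, and a $GL(B)$-conjugation further normalizes $X=E_{0,m-1}$.

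Now parametrize $M\in H$ by $(\mu,a,e,d)$ with $\mu=M_{0,0}=M_{m-1,m-1}$, $a=(M_{0,j})_{1\leq j\leq m-2}$, $e=M_{0,m-1}$ and $d=(M_{i,m-1})_{1\leq i\leq m-2}$. Unfolding the abelian conditions $[M,M']=0$ for $M,M'\in H$ yields
\[
\mu\,a' = \mu'\,a, \qquad \mu\,d' = \mu'\,d, \qquad \langle a,d'\rangle = \langle a',d\rangle.
\]
If some $M_0\in H$ had $\mu_0\neq 0$, the first two relations would force the $(a,d)$-parts of every element of $H$ to be proportional to those of $M_0$, so $\dim H\leq 2$, contradicting $\dim H=m-1$ when $m\geq 4$. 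For $m=3$, one checks directly that $T(A^*)$ would then contain a second rank-one matrix coming from a combination $\lambda I+\mu X+\nu M_0$ with $\lambda+\nu\mu_0=0$ and $\mu$ tuned so that row $0$ and row $1$ become proportional, violating the single-point hypothesis. Either way, $\mu\equiv 0$ on $H$, and the third relation then says the projection $L:=\pi(H)\subset V\oplus V$ (with $V:=\BC^{m-2}$ and $\pi$ forgetting $e$) is isotropic for the canonical symplectic form $\omega((a,d),(a',d'))=\langle a,d'\rangle-\langle a',d\rangle$. Together with $X=(0,0,1)\in H$ and $\dim H=m-1$, this makes $L$ Lagrangian of dimension $m-2$ and $H=L\oplus\BC X$.

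Finally, an $M\in H$ with $\mu=0$ is rank one iff $a=0$ or $d=0$, so the single-point hypothesis forces $L$ to be transverse to both coordinate Lagrangians $\{0\}\oplus V$ and $V\oplus\{0\}$, hence $L$ is the graph of a symmetric invertible linear map $\phi:V\to V$. Conjugation by the centralizer of $X$ in $GL(B)$ acts on $(a,d)$ as $(g_{00}G^{-T}a,\,Gd/g_{00})$, transforming $\phi$ by $\phi\mapsto G\phi G^T/g_{00}^2$; over $\BC$ we can choose $G$ so that $\phi=I_V$. Then $L$ is the diagonal $\{(w,w):w\in V\}$ and $T(A^*)$ is spanned by $I$, $X=E_{0,m-1}$ and $\{E_{0,j}+E_{j,m-1}\}_{j=1}^{m-2}$, which matches $\mathcal{E}_{\alpha^0}(\tilde T_{m-2,CW})$ in suitable bases, giving $T\cong\tilde T_{m-2,CW}$. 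The trickiest point will be forcing $\mu\equiv 0$ in the $m=3$ case, where the dimensional argument degenerates and one must invoke the single-point hypothesis explicitly to rule out the $\mu\neq 0$ sub-case.
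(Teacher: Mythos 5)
Your proof is correct (modulo two small caveats below) and takes a genuinely different route from the paper. You normalize $T(\alpha_0)=I$ and work directly with the abelian, unital subalgebra $T(A^*)\subset\tend(B)$: commutativity with $X$ forces $X$ to be rank-one nilpotent, and then unwinding $[M,M']=0$ on the tangent slice $H$ turns the problem into a symplectic one -- the projection $L\subset V\oplus V$ is isotropic, hence Lagrangian, and the single-point hypothesis forces $L$ transverse to both coordinate Lagrangians, so $L$ is the graph of a symmetric invertible $\phi$, which a further conjugation normalizes to $I_V$. The paper instead normalizes the $\pp{m-2}$, uses that normal form to prove $1$-genericity, invokes Proposition~\ref{surpriseprop} to pass to symmetric tensors, and finishes by normalizing the residual quadric $M'$. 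Your Lagrangian reformulation is a clean structural observation, and the nilpotency of $X$ falls out for free, which the paper never makes explicit.

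Two caveats. First, you invoke Propositions~\ref{thm:brankdiag} and \ref{closedcor}, which require $\ur(T)=m$, but that hypothesis does not appear in the statement of Theorem~\ref{cwflag}. This is actually a defect of the statement, not of your argument: the paper's own proof has exactly the same silent dependence (Proposition~\ref{surpriseprop} needs Strassen's equations, and the claim that the $\pp{m-2}$ can be normalized to the symmetric form $E$ is precisely equivalent to $L$ being Lagrangian). Indeed, without an abelian/border-rank-$m$ hypothesis the statement is false: for $m=4$ the space $T(A^*)=\tspan\{I,\,E_{03},\,E_{01}+E_{13},\,E_{02}+E_{13}+E_{23}\}\subset\tend(\BC^4)$ meets the Segre only at $[E_{03}]$, has $\pp 2$ tangent intersection, is $1_A$-generic, yet $[E_{01}+E_{13},\,E_{02}+E_{13}+E_{23}]=E_{03}\neq 0$, so it is not abelian and not $\tilde T_{2,CW}$. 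The theorem should carry the border-rank-$m$ hypothesis of the preceding Proposition~\ref{cwflagbb}; under that reading your use of those propositions is legitimate (and you never actually need the End-closed part). Second, your $m=3$ sub-case is only sketched; it does work -- with $\mu_0\neq 0$ one finds, among $\lambda I+\mu X+\nu M_0$ with $\lambda=-\nu$ and $\mu$ chosen to kill the remaining $2\times 2$ minor, a second rank-one point $[(a_0,-1,0)^T(0,-1,d_0)]\neq[X]$ -- and is worth spelling out since the dimension count genuinely degenerates there.
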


\begin{proof}

For the second, we first show that $T$ is $1$-generic.
 If we choose bases such that  $X=b_1\ot c_1$, then, after
changing bases,    the
$\pp{m-2}$ must  be the projectivization of
\be\label{mustbe}
E:=\begin{pmatrix} x_1& x_2 & \cdots & x_{m-1} & 0 \\
x_2 & &   &   & \\
\vdots &  & & & \\
x_{m-1} & & & & \\
 0 & & & &\end{pmatrix} . 
\ene

Write  $T(A^*)=\tspan\{E,M\}$ for some matrix $M$. As $T$ is $1_A$-generic we can assume that $M$ is invertible. In particular, the last row of $M$ must contain
 a  nonzero entry. In the basis order $x_1,\dots,x_{m-1},M$, the space of matrices $T(B^*)$ has triangular form and contains matrices with nonzero diagonal entries. The proof for $T(C^*)$ is analogous, hence $T$ is $1$-generic.

By Proposition \ref{surpriseprop} we may assume that $T(A^*)$ is contained in the space of symmetric matrices. Hence, we may assume that $E$ is as above and $M$ is a symmetric matrix. By further changing the basis we may assume that $M$ has:
\begin{enumerate}
\item the first row and column equal to zero, apart from their last entries that are nonzero (we may assume they are equal to $1$),
\item the last row and column equal to zero apart from their first entries.
\end{enumerate}
Hence the matrix $M$ is determined by a submatrix $M'$ of rows and columns $2$ to $m-1$. As $T(A^*)$ contains a matrix of maximal rank, the matrix $M'$ must have rank $m-2$. We can change the basis $x_2,\dots,x_{m-1}$ in such a way that the quadric corresponding to $M'$ equals $x_2^2+\dots+x_{m-1}^2$. This will also change the other matrices, which correspond to quadrics $x_1x_i$ for $1\leq i\leq m-1$, but will not change the space that they span. We obtain the tensor $\tilde T_{m-2,CW}$, that indeed satisfies the assumptions of the theorem.
 \end{proof}
\subsection{A second geometric characterization of the Coppersmith-Winograd tensors}
 {\it Compression genericity} is defined and discussed in \cite{2016arXiv160807486L}.
Here we just discuss the simplest case.
We say a   $1$-generic, tensor $T\in A\ot B\ot C$ is {\it maximally  
compressible}
if there exists hyperplanes $H_A\subset A^*$, $H_B\subset B^*$, $H_C\subset C^*$
such that
$T\mid_{H_A\times H_B\times H_C}=0$.

If $T\in S^3A\subset A\ot A\ot A$, we will say $T$ is  {\it maximally symmetric compressible} if there exists
a hyperplane $H_A\subset A^*$ such that  $T\mid_{H_A\times H_A\times H_A}=0$.

Recall that a tensor $T\in \BC^m\ot \BC^m\ot \BC^m$ that is $1$-generic and satisfies
Strassen's equations is strictly isomorphic to a tensor in $S^3\BC^m$. 
\begin{theorem}\label{cwcompr}
Let $T\in S^3\BC^m$ be $1$-generic and maximally symmetric compressible.
Then $T$ is one of:
\begin{enumerate}
\item $T_{m-1,CW}$
\item $\tilde T_{m-2,CW}$
\item $T=a_1(a_1^2+\cdots a_m^2)$. As a subspace of $\BC^m\ot \BC^m$,  this is
$$
\begin{pmatrix} x_1 & x_2 & &\cdots & x_m\\
x_2 & x_1 & 0&\cdots & \\
x_3 & 0  & x_1&  & \\
\vdots & 0  &  & \ddots & \\
x_m & 0  &  &  & x_1
\end{pmatrix}.
$$
\end{enumerate}
In particular, the only $1$-generic, maximally symmetric compressible,
minimal border rank tensor in $\BC^m\ot \BC^m\ot \BC^m$ is $\tilde T_{m-2,CW}$.
\end{theorem}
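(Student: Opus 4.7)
The plan is to factor $T = v\cdot q$ using the compression, compute the contraction $T(\alpha)$ in closed form, and then classify the pair $(v,q)\in V\times S^2V$ under $GL(V)$ using $1$-genericity. To begin, let $V = \BC^m$ and let $v\in V\setminus 0$ be a vector cutting out the compression hyperplane $H_A = v^\perp\subset V^*$. The hypothesis $T|_{H_A\times H_A\times H_A}=0$ says precisely that, viewed as a cubic polynomial on $V^*$, the linear form $v$ divides $T$, so there is a unique $q\in S^2V$ with $T = v\cdot q$. Polarizing yields the contraction formula
\[
T(\alpha) \;=\; \tfrac{1}{3}\,\alpha(v)\,q \;+\; \tfrac{2}{3}\, v\cdot(\alpha\,\lrcorner\, q) \quad \in\; S^2V,
\]
from which the image of $T(\alpha):V^*\to V$ lies in $\operatorname{Im}(q)+\BC v$. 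Since $T$ is $1$-generic, some $T(\alpha)$ must be invertible, forcing $\dim(\operatorname{Im}(q)+\BC v)=m$; this leaves just two regimes: $\operatorname{rank}(q)=m$, or $\operatorname{rank}(q)=m-1$ with $v\notin\operatorname{Im}(q)$.

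Next I would classify the resulting $(v,q)$-orbits under $GL(V)$. Over $\BC$ a quadratic form is determined by its rank, so in the rank $m-1$ case set $q = y_1^2+\cdots+y_{m-1}^2$; the stabilizer of $q$ then acts transitively on $V\setminus\operatorname{Im}(q)$ and one normalizes $v = e_m$, recovering $T = e_m(y_1^2+\cdots+y_{m-1}^2)\cong T_{m-1,CW}$. In the rank $m$ case, $v = q(\alpha)$ for a unique $\alpha\in V^*$, and the remaining $GL(V)$-invariant is the scalar $\alpha(v) = q(\alpha,\alpha)$ up to rescaling $(v,q)\mapsto(\lambda v,\lambda^{-1}q)$. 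If $\alpha(v)=0$, take a hyperbolic basis of $(V^*,q)$ starting with $\alpha^1=\alpha$: this gives $q = 2e_1e_2 + e_3^2+\cdots+e_m^2$ and $v = e_2$, matching $\tilde T_{m-2,CW}$ after relabeling. If $\alpha(v)\neq 0$, an orthonormal basis with $\alpha^1=\alpha$ yields $q = e_1^2+\cdots+e_m^2$ and $v = e_1$, which is case (3) of the theorem. Checking $1$-genericity of each normal form is immediate by evaluating at $\alpha^1$ (or $\alpha^m$ in case (1)).

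For the ``in particular'' conclusion I would compare border ranks. By the border rank computations noted in \S\ref{cwtensors}, $\ur(T_{m-1,CW})=m+1$ and $\ur(\tilde T_{m-2,CW})=m$. For $T = a_1(a_1^2+\cdots+a_m^2)$ with $m\geq 3$, the matrix presentation in item (3) shows $T(\alpha^1)$ is a non-zero scalar multiple of the identity, so $\cE(T) = T(A^*)$. In this space the matrix $M_j$ obtained by setting $x_j=1$ (all other $x_i=0$) has its only non-zero entries at positions $(1,j)$ and $(j,1)$; a one-line calculation gives $[M_2,M_3]\neq 0$, so $\cE(T)$ is not abelian, Strassen's equations fail, and $\ur(T)>m$ by Proposition \ref{thm:brankdiag}. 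Hence $\tilde T_{m-2,CW}$ is the unique minimal border rank tensor on the list.

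The main anticipated obstacle is the orbit analysis in the second step: verifying that the stabilizer of $q$ (in particular its ``parabolic'' part in the low-rank regime) really does act transitively on the relevant subsets of $V$ requires some care, as does confirming no further $GL(V)$-invariant of the pair $(v,q)$ intervenes beyond rank and isotropy. The $1$-genericity checks, commutator computation, and border rank comparison in the remaining steps are routine given the explicit matrix forms.
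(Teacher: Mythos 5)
Your proof is correct and follows essentially the same route as the paper: factor $T=a_1Q$ (your $v\cdot q$), use $1$-genericity to force $\operatorname{rank}Q\in\{m,m-1\}$ (with $v\notin\operatorname{Im}q$ in the degenerate case), and split the rank-$m$ case by whether $\alpha(v)=0$, which is exactly the paper's tangent/transverse dichotomy for the hyperplane $H_A$; your commutator argument for case (3) in addition supplies the justification of the final ``in particular'' claim that the paper leaves implicit. The only slip is the parenthetical that $1$-genericity in case (1) can be checked by evaluating at $\alpha^m$ --- that contraction is $\tfrac13 q$, of rank $m-1$ --- but this verification is not needed for the stated implication and holds at a generic $\alpha$ anyway.
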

\begin{proof}
Let $a_1$ be a basis of the line  $H_A\upperp\subset \BC^m$.
Then $T=a_1Q$ for some $Q\in S^2\BC^m$. By $1$-genericity, the rank of
$Q$ is either $m$ or $m-1$. If the rank is $m$, there are two
cases, either the hyperplane $H_A$ is tangent to $Q$, or it intersects it
transversely. The second is case 3. The first  has a normal form
$a_1(a_1a_m+ a_2^2+\cdots + a_{m-1}^2)$, which, when written as a tensor,  is $\tilde T_{m-2,CW}$.
If $Q$ has rank $m-1$, by $1$-genericity, its vertex must be  in $H_A$ and thus
we may choose coordinates such that
$Q=(a_2^2+\cdots + a_m^2)$, but then $T$, written as a tensor is $T_{m-1,CW}$.
\end{proof}

\bibliographystyle{amsplain}

\bibliography{Lmatrix}

\end{document}